\newcommand{\RNum}[1]{\uppercase\expandafter{\romannumeral #1\relax}}
\newcommand{\rt}{\rightarrow}
\newcommand{\st}{\stackrel}
\newcommand{\CC}{\mathcal{C} }
\newcommand{\CX}{\mathcal{X} }
\newcommand{\CY}{\mathcal{Y} }
\newcommand{\rmod}{{\rm{mod\mbox{-}}}}
\newcommand{\op}{{\rm{op}}}
\newcommand{\add}{{\rm{add}}}
\newcommand{\Add}{{\rm{Add}}}
\newcommand{\Prod}{{\rm{Prod}}}
\newcommand{\gen}{{\rm{gen}}}
\newcommand{\Gen}{{\rm{Gen}}}
\newcommand{\cogen}{{\rm{cogen}}}
\newcommand{\Cogen}{{\rm{Cogen}}}
\newcommand{\cok}{{\rm{Coker}}}
\newcommand{\Ker}{{\rm{Ker}}}
\newcommand{\Tor}{{\rm{Tor}}}
\newcommand{\Hom}{{\rm{Hom}}}
\newcommand{\Ext}{{\rm{Ext}}}
\newcommand{\End}{{\rm{End}}}
\newtheorem{theorem}{Theorem}[section]
\newtheorem{corollary}[theorem]{Corollary}
\newtheorem{lemma}[theorem]{Lemma}
\newtheorem{facts}[theorem]{Facts}
\theoremstyle{definition}
\newtheorem{definition}[theorem]{Definition}
\newtheorem{example}[theorem]{Example}
\newtheorem{remark}[theorem]{Remark}
\theoremstyle{plain}
\theoremstyle{definition}
\newcommand{\holim@}[2]{%
\vtop{\m@th\ialign{##\cr
\hfil$#1\operator@font holim$\hfil\cr
\noalign{\nointerlineskip\kern1.5\ex@}#2\cr
\noalign{\nointerlineskip\kern-\ex@}\cr}}%
}
\newcommand{\holim}{%
\mathop{\mathpalette\holim@{\rightarrowfill@\textstyle}}\nmlimits@
}
\newcommand*{\medcap}{\mathbin{\scalebox{1.5}{\ensuremath{\cap}}}}%
\def\@secnumfont{\bfseries}
\def\section{\@startsection{section}{1}%
\z@{.7\linespacing\@plus\linespacing}{.5\linespacing}%
{\normalfont\Large\bfseries\filcenter}}
\def\subsection{\@startsection{subsection}{2}%
\z@{.5\linespacing\@plus.7\linespacing}{-.5em}%
{\normalfont\large\bfseries}}
\begin{document}

\author[K. Divaani-Aazar, A. Mahin Fallah and M. Tousi]
{Kamran Divaani-Aazar, Ali Mahin Fallah and Massoud Tousi}

\title[Equivalence classes of ...]
{Equivalence classes of Wakamatsu (co)tilting modules and preenveloping and precovering subcategories}

\address{K. Divaani-Aazar, Department of Mathematics, Faculty of Mathematical Sciences, Alzahra University,
Tehran, Iran.}
\email{kdivaani@ipm.ir}

\address{A. Mahin Fallah, School of Mathematics, Institute for Research in Fundamental Sciences (IPM),
P.O. Box: 19395-5746, Tehran, Iran.}
\email{amfallah@ipm.ir, ali.mahinfallah@gmail.com}

\address{M. Tousi, Department of Mathematics, Faculty of Mathematical Sciences, Shahid Beheshti University,
Tehran, Iran.}
\email{mtousi@ipm.ir}

\subjclass[2020]{16D20; 16G99; 18G05.}

\keywords{Coresolving subcategory; covariantly finite subcategory; Ext-injective cogenerator; Ext-injective module;
Ext-projective generator; Ext-projective module; invertible module; resolving subcategory; right Morita ring;
self-orthogonal module; tilting module; Wakamatsu cotilting module; Wakamatsu tilting module.\\
The research of the second author is supported by a grant from IPM (No.1405130015).}

\begin{abstract}
In 1991, over an Artin algebra $A$, Auslander and Reiten established a one-to-one correspondence between the
isomorphism classes of basic tilting left $A$-modules and certain covariantly finite coresolving subcategories
of $A\text{-mod}$. They also obtained a one-to-one correspondence between the isomorphism classes of basic
cotilting modules and certain contravariantly finite resolving subcategories. Later, in 2004, Mantese and Reiten
extended these correspondences to basic Wakamatsu tilting left $A$-modules. In this paper, we generalize the
Mantese–Reiten results to arbitrary associative rings.

For an associative ring $R$, we introduce an equivalence relation ${\sim}$ on the class of Wakamatsu tilting right
$R$-modules. We establish a bijection between the equivalence classes $[T]$ and a family of preenveloping coresolving
subcategories of $\text{Mod}\text{-}R$, as well as a bijection between the classes $[T]$ and a family of resolving
subcategories of $\text{mod}\text{-}R$. Furthermore, we show that if $R$ is a Noetherian algebra over a Noetherian
commutative semi-local complete ring, then for two basic Wakamatsu tilting right $R$-modules $T$ and $T'$, one has
$T\sim T'$ if and only if $T\cong T'$. Consequently, the bijections obtained here extend the Mantese–Reiten theorems
from the setting of Artin algebras to that of arbitrary associative rings.
\end{abstract}

\maketitle

\tableofcontents

\section{Introduction}

Tilting theory plays a significant role in representation theory. Classical tilting modules over finite-dimensional
algebras were introduced in the early 1980s by Brenner and Butler \cite{BB}, and independently by Happel and Ringel
\cite{HR}. Miyashita \cite{M} studied tilting modules over general associative rings, while Wakamatsu \cite{W2}
extended the notion of tilting modules to include those that do not necessarily have finite projective dimension.
Green, Reiten, and Solberg \cite{GRS} referred to these modules as Wakamatsu tilting modules.

Let $A$ be an Artin algebra. The notions of covariantly and contravariantly finite subcategories of $A\text{-mod}$
were introduced by Auslander and Smal{\O} in \cite{AS1, AS2} in order to study subcategories of $A\text{-mod}$ admitting
almost split sequences. The study of these notions was further developed in \cite{AR1} in connection with tilting and
cotilting left $A$-modules. Auslander and Reiten \cite{AR1} showed that the assignment $T \mapsto T^{\perp}$ yields a
one-to-one correspondence between isomorphism classes of basic tilting left $A$-modules and covariantly finite coresolving
subcategories of $A\text{-mod}$ in which every module admits a finite coresolution. In the same paper, they also proved
that the assignment $T \mapsto {}^{\perp}T$ establishes a one-to-one correspondence between isomorphism classes of basic
cotilting left $A$-modules and contravariantly finite resolving subcategories of $A\text{-mod}$ in which every module admits
a finite resolution. In \cite{MR}, Mantese and Reiten extended these correspondences to Wakamatsu tilting left $A$-modules,
generalizing the classical tilting and cotilting cases; see \cite[Theorems 2.10 and 2.12]{MR}. In this paper, we further
generalize the results of Mantese and Reiten to arbitrary right modules, not necessarily finitely generated, over arbitrary
associative rings.

Let $R$ be an associative ring with identity. We introduce an equivalence relation ${\sim}$ on the class of right
$R$-modules and establish the following results:

\begin{theorem}\label{1.1}
There is a one-to-one correspondence between the equivalence classes of Wakamatsu tilting right $R$-modules, under the
equivalence relation ${\sim}$, and the preenveloping $\overline{\text{coresolving}}$ subcategories of right $R$-modules
that possess an Ext-projective $\overline{\text{generator}}$ in $\gen^*(R)$, maximal among those with the same
Ext-projective $\overline{\text{generator}}$.
\end{theorem}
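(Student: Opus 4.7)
The plan is to establish the bijection by constructing inverse assignments $\Phi\colon [T]\mapsto \mathcal{X}_T$ and $\Psi\colon \mathcal{X}\mapsto [T_{\mathcal{X}}]$ between equivalence classes of Wakamatsu tilting right $R$-modules and the prescribed family of preenveloping $\overline{\text{coresolving}}$ subcategories.

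For the forward direction, given a Wakamatsu tilting module $T$, I would take $\mathcal{X}_T$ to be the largest subcategory of $\rMod R$ that contains $T$, lies inside the right $\Ext$-orthogonal $T^{\perp}$ (so that $T$ is $\Ext$-projective), has $T$ as an $\overline{\text{generator}}$ in the sense that every object admits a right $\add(T)$-resolution living in $\mathcal{X}_T$, and is closed under the operations defining the $\overline{\text{coresolving}}$ property (extensions, cokernels of monomorphisms between its objects, and the relevant closure with respect to injectives or the bar-modification thereof). The self-orthogonality of $T$ immediately gives $T\in \mathcal{X}_T$ with $T$ as an $\Ext$-projective $\overline{\text{generator}}$; maximality holds by construction. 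The subtlety here is showing that $\mathcal{X}_T$ is preenveloping: I would build, for each $M\in \rMod R$, an $\mathcal{X}_T$-preenvelope inductively by splicing together $\add(T)$-approximations and using the Wakamatsu tilting coresolution of $R$ by $\add(T)$ to control the cokernels, a standard construction that in the Artin-algebra setting is the one employed by Mantese--Reiten.

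For the reverse direction, let $\mathcal{X}$ be in the target family with $\Ext$-projective $\overline{\text{generator}}$ $T\in\gen^{*}(R)$, maximal among subcategories sharing this generator. I would show $T$ is Wakamatsu tilting as follows: self-orthogonality of $T$ is forced by the $\Ext$-projective condition together with $\add(T)\subseteq \mathcal{X}$; the existence of the required coresolution of $R$ by $\add(T)$ is extracted by iterating the preenvelope of $R$ into $\mathcal{X}$ and using that $\mathcal{X}$ is $\overline{\text{coresolving}}$ to keep the syzygies inside $\mathcal{X}$, with the $\Ext$-projectivity of $T$ ensuring that the successive approximations are surjective with kernels back in $\mathcal{X}$. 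The condition $T\in\gen^{*}(R)$ gives the second half of the Wakamatsu tilting hypothesis (the correct generation of $R$).

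The main obstacle, as expected, is proving $\Psi\Phi = \id$ and $\Phi\Psi = \id$, and in particular that the equivalence relation $\sim$ is precisely the kernel of $\Phi$. For one direction one shows that if $T\sim T'$ then $\add(T)$ and $\add(T')$ yield the same class of iterated $\Ext$-projective approximations, hence $\mathcal{X}_T = \mathcal{X}_{T'}$; for the converse, extracting $T$ from $\mathcal{X}_T$ as the $\Ext$-projective $\overline{\text{generator}}$ determines $T$ only up to $\sim$, which is why the equivalence relation is the correct notion to use in place of isomorphism. The equality $\Phi\Psi(\mathcal{X}) = \mathcal{X}$ uses the maximality hypothesis decisively: both $\mathcal{X}$ and $\mathcal{X}_{T_{\mathcal{X}}}$ are subcategories of the target family with the same $\Ext$-projective $\overline{\text{generator}}$ $T_{\mathcal{X}}$, so maximality forces them to coincide. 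I expect the most delicate technical step to be verifying that the preenveloping property is preserved in both directions without the finiteness assumptions available in the Artin-algebra setting of Mantese--Reiten, which is the reason the condition $T\in\gen^{*}(R)$ appears in the theorem.
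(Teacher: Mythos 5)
Your overall architecture (well-definedness, injectivity, surjectivity, with maximality forcing $\Phi\Psi=\mathrm{id}$) matches the paper's, and your forward-direction target is essentially $\overline{\CX_T}=T^{\perp}\cap\Gen^*(T)$. But the two steps you yourself flag as delicate are exactly where the proposal has genuine gaps, and the one concrete method you offer for the first of them would fail. For the preenveloping property you propose to splice together $\add(T)$-approximations as Mantese--Reiten do; over an arbitrary associative ring, $\add(T)$- (or $\Add(T)$-) preenvelopes of arbitrary modules in $\rMod R$ need not exist, so this construction does not get off the ground. The paper avoids it entirely: by Lemma \ref{2.6}(ii) (resting on Sun's results) one has $\overline{\CX_T}=N^{\perp}$ for a single module $N$, and then Lemma \ref{2.5} (Angeleri H\"ugel, ultimately the Eklof--Trlifaj small-object argument) says $({}^{\perp}(N^{\perp}),N^{\perp})$ is a \emph{complete} cotorsion pair, which yields special preenvelopes for free. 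Some replacement of this set-theoretic input is indispensable in the general setting; your sketch contains none.

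For injectivity, the assertion that ``extracting $T$ from $\CX_T$ as the Ext-projective $\overline{\text{generator}}$ determines $T$ only up to $\sim$'' is precisely the claim to be proved, not an argument for it, and it is the heart of the theorem since $\sim$ is defined via invertible bimodules over the (generally non-isomorphic) endomorphism rings $S=\End(T_R)$ and $S'=\End(T'_R)$. The paper's Lemma \ref{5.3} does concrete work here: from $\overline{\CX_T}=\overline{\CX_{T'}}$ and the characterization of $\overline{\CX_T}$ via the isomorphism $\nu_M$ (Lemma \ref{2.6}(i)) one gets $T\cong\Hom_R(T',T)\otimes_{S'}T'$ and $T'\cong\Hom_R(T,T')\otimes_S T$; one then shows $T'\in\add(T_R)$ and $T\in\add(T'_R)$ by splitting the generator sequences, applies projectivization (Lemma \ref{2.11}) and the Hom-evaluation isomorphisms (Lemma \ref{5.2+}) to verify that $\Hom_R(T',T)$ is a rank-one projective $(S,S')$-bimodule, and finally invokes Theorem \ref{4.9} to conclude it is invertible. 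Without this chain (or an equivalent one) the map is not shown to be injective, and without the identification of rank-one projective with invertible bimodules the relation $\sim$ is not even connected to what the subcategory $\overline{\CX_T}$ can see. Your reverse direction (surjectivity) is closer to workable in outline, but it too quietly assumes $\Add(T)$-approximations exist when building the coresolution of $R$; the paper delegates this to a cited result of Sun.
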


\begin{theorem}\label{1.1a}
Let $R$ be a right Morita ring. There is a one-to-one correspondence between the isomorphism classes of basic Wakamatsu
tilting right $R$-modules and the coresolving subcategories of finitely generated right $R$-modules that possess
an Ext-projective generator and are maximal among those having the same Ext-projective generator.
\end{theorem}

\begin{theorem}\label{1.3}
There is a one-to-one correspondence between the equivalence classes of finitely generated, product-complete, Wakamatsu
cotilting right $R$-modules, under the equivalence relation ${\sim}$, and the precovering $\overline{\text{resolving}}$
subcategories of right $R$-modules that admit a finitely generated, product-complete, Ext-injective
$\overline{\text{cogenerator}}$, which is maximal among those sharing the same Ext-injective $\overline{\text{cogenerator}}$.
\end{theorem}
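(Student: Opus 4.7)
The plan is to mimic the proofs of Theorems~\ref{1.1} and~\ref{1.2}, with the roles of enveloping/covering and generating/cogenerating swapped, and to use product-completeness to compensate for the finite-generation constraint on $T$. Concretely, I would define the forward map by
\[
[T]\;\longmapsto\;{}^{\perp}T\ :=\ \{X \in \rMod R : \Ext^{i}_R(X,T) = 0 \text{ for all } i \geq 1\},
\]
and the reverse map by sending a precovering resolving subcategory $\CX$ together with its distinguished product-complete Ext-injective cogenerator $T \in \rmod R$ to the equivalence class $[T]$.

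For the forward direction, I would first check that ${}^{\perp}T$ is resolving, which reduces to standard long-exact-sequence manipulations; the projectives lie in it because $T$ is self-orthogonal as a Wakamatsu cotilting module, and $T$ itself lies in ${}^{\perp}T$ and serves as an Ext-injective cogenerator by virtue of the defining coresolution of $R_R$ by modules in $\Prod(T)$. The crucial step is precovering: given any $M \in \rMod R$, I would construct a ${}^{\perp}T$-precover of $M$ via a dual Bongartz-type argument, using that $\Prod(T) = \Add(T)$ permits replacing products of copies of $T$ by suitable direct sums in the approximating sequence. Maximality comes next: any resolving $\CY \supseteq {}^{\perp}T$ with $T$ as Ext-injective cogenerator forces $\Ext^{\geq 1}_R(Y,T)=0$ for every $Y \in \CY$, hence $\CY \subseteq {}^{\perp}T$.

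For the inverse assignment, starting from $(\CX, T)$ as in the statement, I would recover that $T$ is Wakamatsu cotilting: self-orthogonality is immediate from Ext-injectivity, and the required coresolution $0 \to R \to T^{0} \to T^{1} \to \cdots$ with $T^{i} \in \Prod(T)$ is built inductively by taking $\CX$-precovers of successive cosyzygies, with product-completeness keeping the approximating terms inside $\Prod(T)$. Well-definedness on equivalence classes should reduce to showing that $T \sim T'$ forces ${}^{\perp}T = {}^{\perp}T'$, which follows directly from the equality of additive closures encoded in $\sim$.

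The main obstacle I anticipate is the interplay between the precovering property of $\CX \subseteq \rMod R$ and the finite-generation restriction $T \in \rmod R$. Unlike the preenveloping situation of Theorem~\ref{1.1}, where arbitrary products of a Wakamatsu tilting module are automatically available to build enveloping approximations, here the precover of a possibly non-finitely-generated module must be produced from a finitely generated cogenerator. Product-completeness $\Prod(T) = \Add(T)$ is precisely the condition that bridges this gap, letting one exchange the arbitrary products forced by dual coresolutions for direct sums that behave well within $\rmod R$. Making this exchange rigorous while preserving the exactness needed to conclude that the resulting approximation is genuinely a precover, and verifying that the maximality assertion survives the exchange, will be the most delicate point.
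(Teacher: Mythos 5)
Your forward map is not the one that makes the statement true, and it does not land in the stated codomain. The correspondence must send $[C]$ to $\overline{\CY_{C}}={}^{\perp}C\cap\Cogen^{*}(C)$, not to the full orthogonal ${}^{\perp}C$. The distinction is essential: a module $M\in{}^{\perp}C$ need not embed into a product of copies of $C$ with cokernel remaining in the class, so $C$ is in general \emph{not} an Ext-injective $\overline{\text{cogenerator}}$ for ${}^{\perp}C$; by Lemma \ref{2.9}(iii) the maximal subcategory having $C$ as Ext-injective $\overline{\text{cogenerator}}$ is exactly $\overline{\CY_{C}}$, which is in general properly contained in ${}^{\perp}C$ (equality is essentially the finite-injective-dimension case and is not available for Wakamatsu cotilting modules). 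Your justification that ``$T$ serves as an Ext-injective cogenerator by virtue of the defining coresolution of $R_R$'' also rests on the wrong axiom: Definition \ref{2.8} requires an injective cogenerator $Q$ with $Q\in\gen^{*}(\Prod(C))$, i.e.\ a \emph{resolution} of $Q$ by products of $C$, not a coresolution of $R$. Accordingly, the surjectivity step (Lemma \ref{6.7}) builds such a resolution of $Q$ by iterating $\CC$-precovers and factoring each one through a module in $\Prod(C)$ via the cogenerator property, with product-completeness used only to identify $\Prod(C_R)^{\perp}$ with $\Add(C_R)^{\perp}$; your sketch of ``precovers of successive cosyzygies'' producing a coresolution of $R$ conflates the two dual constructions.

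The larger omission is injectivity. You verify only that the map is constant on equivalence classes, and your ``reverse map'' presupposes a distinguished cogenerator attached to each subcategory; but a subcategory may admit many Ext-injective $\overline{\text{cogenerators}}$, and the content of the theorem is precisely that any two of them are $\sim$-equivalent. This is where the real work lies: product-completeness plus finite generation give $\overline{\CY_{C}}\cap\text{mod-}R=\CY_{C}\cap\text{mod-}R$ (the paper cites \cite[Lemma 3.13]{DMT}), which reduces the comparison to finitely generated modules; then Lemma \ref{6.4} shows that $\CY_{C_{1}}\cap\text{mod-}R=\CY_{C_{2}}\cap\text{mod-}R$ forces $C_{1}\in\add(C_{2})$ and $C_{2}\in\add(C_{1})$ (the cogenerating sequences split), whence $\Hom_{R}(C_{1},C_{2})$ and $\Hom_{R}(C_{2},C_{1})$ are invertible bimodules by projectivization (Lemma \ref{2.11}) together with the double-dual isomorphism of Lemma \ref{6.3}, giving $C_{1}\sim C_{2}$ via Lemma \ref{4.13}. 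None of this appears in your proposal, and ``equality of additive closures encoded in $\sim$'' runs the implication in the direction you already have, not the one you need.
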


\begin{theorem}\label{1.2}
Let $R$ be a right Noetherian ring. There is a one-to-one correspondence between the equivalence classes of Wakamatsu
tilting right $R$-modules, under the equivalence relation ${\sim}$, and the resolving subcategories of finitely generated
right $R$-modules that possess an Ext-injective cogenerator, maximal among those with the same Ext-injective cogenerator.
\end{theorem}

\begin{theorem}\label{1.5}
Let $R$ be a right Morita ring. There is a one-to-one correspondence between the isomorphism classes of finitely generated,
basic product-complete, Wakamatsu cotilting right $R$-modules and the resolving subcategories of finitely generated right
$R$-modules with a product-complete Ext-injective cogenerator, maximal among those with the same Ext-injective cogenerator.
\end{theorem}

(All notions appearing in the above theorems will be recalled in Section 2.)

The notions of covariantly and contravariantly finite subcategories are typically defined in the category
of finitely generated $R$-modules. For arbitrary $R$-modules, the terms enveloping and covering are
more commonly used for these concepts.

The article is organized as follows:

In Section 2, we review background material and fundamental results on Wakamatsu tilting modules, including their
associated classes $\overline{\CX_{T}}$, $\CX_{T}$, $\overline{\CY_{T}}$, and $\CY_{T}$ of right $R$-modules.

Section 3 examines Wakamatsu cotilting modules. By \cite[Proposition 2.2]{BS}, over an Artin algebra, Wakamatsu
tilting and cotilting modules coincide. Consequently, Mantese and Reiten considered only Wakamatsu tilting modules
in their results. However, this coincidence does not generally hold for arbitrary associative rings. Nevertheless,
we show that if $R$ is a Noetherian algebra over a Noetherian commutative semi-local complete ring, then a
finite-length right $R$-module $T$ is Wakamatsu cotilting if and only if it is Wakamatsu tilting; see Theorem
\ref{3.6}.

Section 4 is devoted to defining the equivalence relation ${\sim}$ mentioned earlier. We do this by using the notion
of invertible bimodules. Bass \cite{B} introduced invertible bimodules in the context of defining Picard groups of
noncommutative rings. Invertible bimodules are precisely those that induce Morita equivalences. In this section, we
also introduce rank-one projective bimodules and show that they coincide with invertible bimodules; see Theorem
\ref{4.9}. Then we show that if $R$ is a Noetherian algebra over a Noetherian commutative semi-local complete ring,
and $T$ and $T'$ are basic Wakamatsu tilting right $R$-modules, then $T \sim T'$ if and only if $T \cong T'$. Hence,
Theorems \ref{1.1}, \ref{1.2}, and \ref{1.3} generalize the Mantese–Reiten theorems for Artin algebras and basic
Wakamatsu tilting modules to the general setting.

Section 5 is devoted to proving Theorems \ref{1.1} and \ref{1.1a}; see Theorems \ref{5.1} and \ref{5.8}. In Section 6,
we prove Theorems \ref{1.3}, \ref{1.2} and \ref{1.5}; see Theorems \ref{6.9}, \ref{6.1} and \ref{6.1a}.

\section{Preliminaries}

In this paper, we consider associative rings with identity, and all modules are assumed to be unitary. Let $R$ be an
associative ring with identity. We use the notation $M_R$ (respectively, $_RM$) to denote a right (respectively, left)
$R$-module. The category of all (respectively, finitely generated) right $R$-modules is denoted as $\text{Mod-}R$
(respectively, $\text{mod-}R$), while $R\text{-Mod}$ (respectively, $R\text{-mod}$) represents the category of all
(respectively, finitely generated) left $R$-modules.

Let $T_R$ be an $R$-module. We denote by $\Add(T_R)$ (respectively, $\add(T_R)$) the class of right $R$-modules which
are isomorphic to a direct summand of a direct sum of copies (respectively, finitely many copies) of $T$. Also,
$\Prod(T_R)$ stands for the class of right $R$-modules which are isomorphic to a direct summand of a direct product of
copies of $T$.

Denote by $\Gen^*(T)$ (respectively, $\gen^*(T)$), the class of all $R$-modules $M_R$ for which there exists an exact
sequence of the form $$\cdots \st{f_2}\longrightarrow T_1 \st{f_1}\longrightarrow T_0\st{f_0}\longrightarrow M
\longrightarrow 0$$ with each $T_i\in \Add(T_R)$ (respectively, $T_i\in \add(T_R)$) and $\Ext^1_R(T,\Ker \ f_i)=0$ for
all $i\geqslant 0$. Dually, $\Cogen^*(T)$ (respectively, $\cogen^*(T)$) is the class of all $R$-modules $M_R$ for which
there exists an exact sequence of the form $$0\longrightarrow M\st{f^{-1}}\longrightarrow
T^0\st{f^0}\longrightarrow T^1 \st{f^1}\longrightarrow \cdots$$ with each $T^i\in \Prod(T_R)$ (respectively, $T^i\in
\add(T_R)$) and $\Ext^1_R(\cok \ f^i,T)=0$ for all $i\geqslant -1$.

We need also to extend the definition of $\gen^*(T)$ to subcategories of $\text{Mod-}R$. For a subcategory $\CC$
of $\text{Mod-}R$, let $\gen^*(\CC)$ denote the class of all $R$-modules $M_R$ for which there exists an exact
sequence of the form $$\cdots\st{f_2}\longrightarrow T_1\st{f_1}\longrightarrow T_0 \st{f_0}\longrightarrow M
\longrightarrow 0$$ with each $T_i\in \CC$ and $\Ext^1_R(X,\Ker \ f_i)=0$ for every $X\in \CC$ and all $i\geqslant
0$.

The notions $\Add(-), \add(-), \Prod(-), \Gen^*(-), \gen^*(-), \Cogen^*(-)$ and $\cogen^*(-)$ for left $R$-modules
are all defined analogously.

Let $\CC$ be a subcategory of $\text{Mod-}R$. Then $\CC$ is said to be \emph{$\overline{coresolving}$} (respectively,
\emph{coresolving}) if it is closed under direct summands, extensions, and cokernels of monomorphisms,
and if it contains all injective modules (respectively, all finitely generated injective modules). Similarly, $\CC$
is said to be \emph{$\overline{resolving}$} (respectively, \emph{resolving}) if it is closed under direct summands,
extensions, and kernels of epimorphisms, and if it contains all projective modules (respectively, all finitely generated
projective modules).

The notation $\CC^{\perp}$ (respectively, $\CC^{\perp_{1}}$) denotes the subcategory of $R$-modules $M_R$
such that $\Ext^{i\geqslant1}_{R}(X,M)=0$ (respectively, $\Ext^1_{R}(X,M)=0$) for all $X\in \CC$. Similarly, the
notation $^{\perp}\CC$ (respectively, $^{\perp_{1}}\CC$) refers to the subcategory of $R$-modules $M_R$
such that $\Ext^{i\geqslant 1}_{R}(M,X)=0$ (respectively, $\Ext^1_{R}(M,X)=0$) for all $X\in \CC$. An $R$-module
$T_R\in \CC$ is said to be {\it Ext-projective} in $\CC$ if it belongs to $^{\perp}\CC$.
Furthermore, $T_R$ is called an {\it Ext-projective $\overline{generator}$} (respectively, {\it Ext-projective generator})
for $\CC$ if it is an Ext-projective module in $\CC$ and for any module $M\in \CC$, there exists
an exact sequence $$0\rightarrow M' \rightarrow T'\rightarrow M\rightarrow 0$$ with $T'\in \Add(T_R)$ (respectively, $T'\in
\add(T_R)$) and $M'\in \CC$. Dually, an $R$-module $T_R\in \CC$ is said to be {\it Ext-injective} in
$\CC$ if it belongs to $\CC^{\perp}$. It is called an {\it Ext-injective $\overline{cogenerator}$}
(respectively, {\it Ext-injective cogenerator}) for $\CC$ if it is an Ext-injective module in $\CC$ and for any module
$M\in \CC$, there exists an exact sequence $$0\rightarrow M \rightarrow T'\rightarrow M'\rightarrow 0$$ with
$T'\in \Prod(T_R)$ (respectively, $T'\in \add(T_R)$) and $M'\in \CC$.

Recall that an $R$-module $M_R$ is called {\it self-orthogonal} if $\Ext^{i\geq 1}_{R}(M,M)=0$. Next, we recall the
definition of Wakamatsu tilting modules from \cite[Section 3]{W1}.

\begin{definition}\label{2.1}
A self-orthogonal module $T_R$ is called a {\it Wakamatsu tilting module} if
\begin{itemize}
\item[(i)] $T_R\in \gen^*(R)$, and
\item[(ii)] $R_R \in \cogen^* (T)$.
\end{itemize}
\end{definition}

Wakamatsu tilting left modules are defined similarly. By \cite[Corollary 3.2]{W1}, we have the following characterization
of the Wakamatsu tilting modules.

\begin{lemma}\label{2.2}
For a bimodule $_ST_R$, the following are equivalent:
\begin{itemize}
\item[(i)] $T_R$ is a Wakamatsu tilting module with $S\cong \End(T_R);$
\item[(ii)] $_ST$ is a Wakamatsu tilting module with $R\cong \End(_ST);$
\item[(iii)] One has
\begin{itemize}
\item[(1)]  $T_R\in \gen^*(R)$ and $_ST\in \gen^*(S)$.
\item[(2)]  $S\cong \End(T_R)$ and $R\cong \End(_ST)$.
\item[(3)] The modules $T_R$ and $_ST$ are self-orthogonal.
\end{itemize}
\end{itemize}
\end{lemma}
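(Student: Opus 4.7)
The plan is to show $(i) \Leftrightarrow (iii)$ directly, after which $(ii) \Leftrightarrow (iii)$ follows by the left-right symmetric argument. The engine behind both directions is a biduality: the functors $\Hom_R(-,T)$ and $\Hom_S(-,T)$ interchange $\add(T_R)$-coresolutions of $R_R$ with $\add({}_S S)$-resolutions of ${}_S T$, and the self-orthogonality of $T$ on both sides makes each functor exact on the relevant classes.

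For $(i) \Rightarrow (iii)$, starting from the Wakamatsu tilting module $T_R$ with $S = \End(T_R)$, I would take the coresolution $0 \to R \to T^0 \to T^1 \to \cdots$ guaranteed by $R_R \in \cogen^*(T)$ and apply $\Hom_R(-, T)$. Self-orthogonality of $T_R$ kills all higher Ext on the $T^i$, and the Ext-vanishing on the cokernels built into the definition of $\cogen^*(T)$ yields exactness after a standard dimension-shift argument. Since $\Hom_R(T^i, T) \in \add({}_S S)$ and $\Hom_R(R, T) = T$, this produces a resolution of ${}_S T$ by finitely generated projective left $S$-modules, proving ${}_S T \in \gen^*({}_S S)$. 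To recover $R \cong \End({}_S T)$ and the self-orthogonality of ${}_S T$, I would then apply $\Hom_S(-, T)$ to this new resolution: the natural evaluation $T^i \to \Hom_S(\Hom_R(T^i, T), T)$ is an isomorphism for $T^i \in \add(T_R)$ (the case $T^i = T$ being precisely $S = \End(T_R)$), so the doubly dualised complex recovers the original coresolution minus the $R$-term, whose cohomology computes $\End({}_S T) = R$ and $\Ext^{\geq 1}_S(T, T) = 0$.

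For $(iii) \Rightarrow (i)$, the argument is formally dual: take the $\add({}_S S)$-resolution $\cdots \to S_1 \to S_0 \to T \to 0$ coming from ${}_S T \in \gen^*({}_S S)$ and apply $\Hom_S(-, T)$; self-orthogonality of ${}_S T$ ensures exactness, while $R \cong \End({}_S T)$ supplies the augmentation $R \hookrightarrow T^0$. The genuinely new step is the cokernel Ext-vanishing $\Ext^1_R(\cok f^j, T) = 0$ required by $\cogen^*(T)$. I would first identify $\cok f^j \cong \Hom_S(\Omega^{j+2} T, T)$, where $\Omega^n T$ is the $n$-th syzygy of ${}_S T$ in the chosen resolution, and then reduce the vanishing, via the short exact sequence $0 \to \cok f^j \to T^{j+2} \to \cok f^{j+1} \to 0$, to the surjectivity of the natural map $\Omega^n T \to \Hom_R(\Hom_S(\Omega^n T, T), T)$. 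The key lemma, proved by induction on $n$, is that this natural map is an isomorphism for every $n \geq 0$: the base case $n = 0$ is the identification $R \cong \End({}_S T)$, while the inductive step applies $\Hom_R(\Hom_S(-, T), T)$ to the syzygy short exact sequence $0 \to \Omega^{n+1} T \to S_n \to \Omega^n T \to 0$, using self-orthogonality of ${}_S T$ to keep $\Hom_S(-, T)$ exact on it and self-orthogonality of $T_R$ to kill $\Ext^{\geq 1}_R(\Hom_S(S_n, T), T)$.

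The main obstacle is precisely this inductive biduality statement on syzygies in $(iii) \Rightarrow (i)$; it is the unique place where all three ingredients of $(iii)$, namely both self-orthogonalities and both endomorphism-ring identifications, are forced to act simultaneously, and any weakening of one of them breaks the dimension-shift that drives the induction.
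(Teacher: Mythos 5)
Your argument is correct. The paper itself gives no proof of this lemma --- it simply cites \cite[Corollary 3.2]{W1} --- and your dualization-plus-biduality argument (applying $\Hom_R(-,T)$ to the $\add(T_R)$-coresolution of $R_R$ to obtain a degreewise finite projective resolution of $_ST$ and dualizing back, together with the inductive biduality $\Omega^nT\cong \Hom_R(\Hom_S(\Omega^nT,T),T)$ on syzygies for the converse direction) is precisely the standard argument underlying that citation, going back to Wakamatsu's Propositions 1--3 in \cite{W2}.
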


From now on, when we say that $_S T_R$ is a Wakamatsu tilting bimodule, we mean that $T_R$ is a Wakamatsu tilting module
and $S \cong \End(T_R)$.

In the next two definitions, we recall the notions of preenvelopes, precovers, and cotorsion pairs.

\begin{definition}\label{2.3}
Let $\mathscr{C}$ be a subcategory of $\text{Mod-}R$, and let $M_R$ be a module.
\begin{itemize}
\item[(i)] An $R$-homomorphism $\phi: M\rt C_M$ with $C_M\in \mathscr{C}$ is called a {\it $\mathscr{C}$-preenvelope} of
$M$ if the induced morphism $\Hom_R(C_M,C)\rt \Hom_R(M,C)$ is surjective for every $C\in \mathscr{C}$.  If, in addition,
$\phi$ is injective and $\cok \ \phi \in {}^{\perp_{1}}\mathscr{C}$, then $\phi$ is called a {\it special
$\mathscr{C}$-preenvelope} of $M$.
\item[(ii)] The subcategory $\mathscr{C}$ is called {\it preenveloping} if every right $R$-module admits a $\mathscr{C}$-preenvelope.
\item[(iii)]  An $R$-homomorphism $\phi: C_M\rt M$ with $C_M\in \mathscr{C}$ is called a {\it $\mathscr{C}$-precover} of
$M$ if the induced morphism $\Hom_R(C,C_M)\rt \Hom_R(C,M)$ is surjective for every $C\in \mathscr{C}$. If, in addition,
$\phi$ is surjective and $\Ker \ \phi \in \mathscr{C}^{\perp_{1}}$, then $\phi$ is called a {\it special $\mathscr{C}$-precove}
of $M$.
\item[(iv)] The subcategory $\mathscr{C}$ is called {\it precovering} if every right $R$-module admits a $\mathscr{C}$-precover.
\end{itemize}
\end{definition}

\begin{definition}\label{2.4}
Let $\CC$ and $\mathcal{D}$ be two subcategories of $\text{Mod-}R$.
\begin{itemize}
\item[(i)] The pair $(\CC,\mathcal{D})$ is called a {\it cotorsion pair} if $\CC={^{\perp_1}}\mathcal{D}$ and $\mathcal{D}=
\CC^{\perp_1}$.
\item[(ii)] A cotorsion pair $(\CC,\mathcal{D})$ is called {\it complete} if every right $R$-module has a special $\CC$-precover
(or, equivalently by \cite[Corollary~2.4]{Sa}, every right $R$-module has a special $\mathcal{D}$-preenvelope).
\end{itemize}
\end{definition}

We recall the following result from \cite[Corollary 4.5]{An}.

\begin{lemma}\label{2.5}
Let $M_R$ be a module. Then $(^\perp(M^{\perp}), M^{\perp})$ is a complete cotorsion pair.
\end{lemma}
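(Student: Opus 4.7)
The plan is to reduce Lemma \ref{2.5} to the Eklof--Trlifaj theorem, which asserts that every cotorsion pair cogenerated by a \emph{set} of modules (in the $\perp_1$ sense) is complete. Two things need to be addressed before this black box can be invoked: first, $M^{\perp}$ involves all higher $\Ext$'s, so it must be realized as $\mathcal{S}^{\perp_1}$ for some set $\mathcal{S}$; second, the left half of the putative cotorsion pair is written with the total $\perp$ in the statement, whereas Definition \ref{2.4} requires the $\perp_1$, so the two must be shown to coincide.

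First I would fix a projective resolution $\cdots \rt P_1\rt P_0\rt M\rt 0$ and set $\mathcal{S}=\{\Omega^i M : i\geqslant 0\}$, where $\Omega^iM$ denotes the $i$-th syzygy (with $\Omega^0M=M$). By dimension shifting, $\Ext^1_R(\Omega^iM,N)\cong\Ext^{i+1}_R(M,N)$ for every $R$-module $N$ and every $i\geqslant 0$, and hence $\mathcal{S}^{\perp_1}=M^{\perp}$. Applying the Eklof--Trlifaj theorem to the set $\mathcal{S}$ then produces a complete cotorsion pair $({}^{\perp_1}(M^{\perp}),M^{\perp})$; in particular every right $R$-module admits a special $M^{\perp}$-preenvelope and, equivalently, a special $^{\perp_1}(M^{\perp})$-precover.

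Second I would show ${}^{\perp_1}(M^{\perp})={}^{\perp}(M^{\perp})$. The inclusion $\supseteq$ is immediate from the definitions. For $\subseteq$, take $N\in {}^{\perp_1}(M^{\perp})$ and $X\in M^{\perp}$, and fix an injective coresolution $0\rt X\rt I^0\rt I^1\rt \cdots$ with cosyzygies $\Omega^{-i}X$. Since each $I^j$ is injective and $X\in M^{\perp}$, the long exact sequence of $\Ext^{*}_R(M,-)$ applied to the short exact sequence $0\rt X\rt I^0\rt \Omega^{-1}X\rt 0$ yields $\Omega^{-1}X\in M^{\perp}$; iterating, $\Omega^{-i}X\in M^{\perp}$ for every $i\geqslant 0$. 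Dimension shifting on the first argument then gives $\Ext^{i+1}_R(N,X)\cong \Ext^1_R(N,\Omega^{-i}X)=0$, so $N\in{}^{\perp}(M^{\perp})$, as required.

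The main obstacle is the Eklof--Trlifaj step: its proof requires genuinely non-trivial set-theoretic input (transfinite filtrations and a Hill-lemma-type argument) to manufacture special precovers from a set of generators, whereas everything else in the argument reduces to routine dimension shifting. Since Lemma \ref{2.5} is quoted verbatim from \cite[Corollary~4.5]{An}, an alternative is simply to cite that result as a black box; the sketch above is the standard way to extract it from the Eklof--Trlifaj machinery.
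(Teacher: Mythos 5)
Your argument is correct and coincides with the paper's: the published text simply cites \cite[Corollary~4.5]{An}, and the authors' own draft proof of this lemma proceeds exactly as you do, invoking the Eklof--Trlifaj/G\"obel--Trlifaj completeness theorem for the set $\{\Omega^n M \mid n\geqslant 0\}$ of syzygies, identifying $\mathcal{S}^{\perp_1}$ with $M^{\perp}$ by dimension shifting, and then using that $M^{\perp}$ is coresolving to upgrade ${}^{\perp_1}(M^{\perp})$ to ${}^{\perp}(M^{\perp})$. No gaps.
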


Let $T_R$ be a module. We define $\overline{\CX_{{T}}}={T}^\perp \medcap \Gen^*(T)$ and $\CX_{{T}}={T}^\perp
\medcap \gen^*(T)$. It is immediate that if $T_R$ is finitely generated, then $\CX_{T}$ is a full subcategory
of $\text{mod-}R$. The following result summarizes some well-known properties of the classes $\overline{\CX_{{T}}}$
and $\CX_{{T}}$ in the case where $T_R$ is a Wakamatsu tilting module.

\begin{lemma}\label{2.6}
Let $_ST_R$ be a Wakamatsu tilting bimodule. Then
\begin{itemize}
\item[(i)] $M_R\in \overline{\CX_{T}}$ if and only if $\Ext^{i\geqslant1}_R(T,M)=0$, $\Tor^S_{i\geqslant1}(\Hom_R(T,M),T)=0$
and the natural map $$\nu_M: \Hom_R(T,M)\otimes_ST\to M$$ is an isomorphism.
\item[(ii)] there is a module $N_R$ such that $\overline{\CX_{T}}=N^\perp$.
\item[(iii)] $\overline{\CX_{T}}$ is a preenveloping $\overline{coresolving}$ subcategory of $\text{Mod-}R$.
\item[(iv)]  $T_R$ is an Ext-projective $\overline{generator}$ for $\overline{\CX_{T}}$. Moreover, if $\CC$ is a subcategory of
$\text{Mod-}R$ having $T_R$ as an Ext-projective $\overline{generator}$, then $\CC\subseteq \overline{\CX_{T}}$.
\item[(v)]  $\overline{\CX_{T}}\medcap \gen^*(R)=\CX_{T}$.
\end{itemize}
\end{lemma}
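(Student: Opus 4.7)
The plan is to prove (i)--(v) in order, treating (i) as the technical engine from which the others follow. For (i), the approach is to exploit the adjunction $(-\otimes_S T,\,\Hom_R(T,-))$ between left $S$-modules and right $R$-modules. Starting from $M\in\overline{\CX_{T}}$ with an $\Add(T)$-resolution $\cdots\to T_1\to T_0\to M\to 0$, dimension-shifting against $\Ext^{i\geqslant 1}_R(T,M)=0$ and self-orthogonality of $T_R$ promotes the kernel conditions $\Ext^1_R(T,\Ker f_i)=0$ to full $\Ext$-vanishing $\Ext^{j\geqslant 1}_R(T,\Ker f_i)=0$ for all $i,j$. Applying $\Hom_R(T,-)$ (which commutes with the relevant coproducts since $T$ is finitely generated) yields a projective resolution of $\Hom_R(T,M)$ over $S$, and re-tensoring with $T$ over $S$ both identifies $M$ with $\Hom_R(T,M)\otimes_ST$ via $\nu_M$ and exhibits $\Tor^S_{i\geqslant 1}(\Hom_R(T,M),T)=0$. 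The converse reverses this, building an $\Add(T)$-resolution of $M$ from a free $S$-resolution of $\Hom_R(T,M)$ tensored with $T$, and verifying the kernel $\Ext^1_R(T,-)$-vanishing by dimension-shifting.

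For (ii), I would construct $N$ from a coresolution of $R$ induced by $_ST$. By Lemma \ref{2.2}, $_ST\in\gen^*(S)$ admits a finitely generated projective resolution $\cdots\to S^{m_1}\to S^{m_0}\to T\to 0$; applying $\Hom_S(-,T)$ and using self-orthogonality of $_ST$ yields an exact coresolution $0\to R\to T^{m_0}\to T^{m_1}\to\cdots$ of $R_R$ with terms in $\add(T_R)$. Letting $C^i$ denote the cokernels, set $N=T\oplus\bigoplus_{i\geqslant 0}C^i$. A dimension-shift argument shows that for $M\in T^\perp$, the vanishing $\Ext^1_R(C^i,M)=0$ for all $i$ is equivalent to exactness of the complex $\cdots\to\Hom_R(T,M)^{m_1}\to\Hom_R(T,M)^{m_0}\to M\to 0$, which via its identification with the tensor of the free $S$-resolution of $_ST$ (augmented by $\nu_M$) is equivalent to $\Tor^S_{i\geqslant 1}(\Hom_R(T,M),T)=0$ together with $\nu_M$ being an isomorphism. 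Thus $N^\perp=\overline{\CX_{T}}$ by (i). Part (iii) then follows cleanly: the $\overline{coresolving}$ closure properties (direct summands, extensions, cokernels of monomorphisms) are inherited from $N^\perp$; preenveloping follows from Lemma \ref{2.5} applied to $N$; and containment of any injective $I$ follows from exactness of $\Hom_R(-,I)$ applied to the coresolution, which makes the appropriate $\Hom_R(T,I)$-complex exact and (with $I\in T^\perp$ automatic) yields $I\in N^\perp$.

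For (iv), Ext-projectivity of $T_R$ is tautological from $\overline{\CX_{T}}\subseteq T^\perp$; the $\overline{generator}$ property comes from truncating the $\Add(T)$-resolution at $0\to\Ker f_0\to T_0\to M\to 0$ and checking $\Ker f_0\in\overline{\CX_{T}}$ by shifting; the maximality clause follows by iterating short exact sequences $0\to M_1\to T'\to M\to 0$ with $T'\in\Add(T)$ and $M_1\in\CC\subseteq T^{\perp_1}$, producing an $\Add(T)$-resolution that witnesses $M\in\Gen^*(T)\cap T^\perp=\overline{\CX_{T}}$. For (v), $\supseteq$ follows from $\add\subseteq\Add$ combined with splicing finitely generated projective resolutions of each $T_i\in\add(T)\subseteq\gen^*(R)$; the reverse inclusion uses (i) together with finite generation of $M$ to produce, via $\Hom_R(T,M)\otimes_ST\cong M$, a surjection $T^N\to M$ with $T^N\in\add(T)$, and then iterates on the kernel. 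The main obstacle is (ii): producing a single module $N$ with $N^\perp=\overline{\CX_{T}}$ in the arbitrary associative ring setting, where the choice $N=T\oplus\bigoplus C^i$ from the $\Hom_S(-,T)$-dual of a resolution of $_ST$ is precisely forced by the adjunction description of (i). A secondary delicate point is the refinement step in (v), where one must ensure that successive kernels remain finitely generated and that $\Hom_R(T,-)$ produces enough generators over $S$ at each iteration to yield an honest $\add(T)$-resolution.
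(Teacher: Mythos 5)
The paper disposes of this lemma in five lines by citing \cite[Proposition 3.1(1), Corollary 3.2, Theorem 3.4(1)]{S} together with Lemma \ref{2.5}; what you have written is essentially a from-scratch reconstruction of the proofs behind those citations, and for (i)--(iv) it is correct. Your dimension-shift in (i) is legitimate because $T\in\gen^*(R)$ forces $\Ext^{j\geqslant 1}_R(T,-)$ to commute with arbitrary direct sums (mere finite generation of $T$, which is all you invoke, suffices only for $\Hom$, so you should say this); your module $N=T\oplus\bigoplus_i C^i$ built from the cokernels of the coresolution $0\to R\to T^{m_0}\to T^{m_1}\to\cdots$ obtained by applying $\Hom_S(-,T)$ to a projective resolution of ${}_ST$ is exactly the witness used in \cite[Theorem 3.4(1)]{S}, and the identification of $\Ext^{j\geqslant 1}_R(C^i,M)=0$ with Tor-vanishing plus bijectivity of $\nu_M$ goes through as you describe; (iii) and (iv) then follow as you say.

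The genuine gap is in the inclusion $\overline{\CX_{T}}\cap\gen^*(R)\subseteq\CX_{T}$ in (v), which you flag as a \enquote{secondary delicate point} but do not close. Surjectivity of $\nu_M$ plus finite generation of $M$ gives you a surjection $T^n\to M$, but for the iteration to produce a $\gen^*(T)$-resolution you need $\Ext^1_R(T,\Ker)=0$, which by the long exact sequence is equivalent to surjectivity of $\Hom_R(T,T^n)=S^n\to\Hom_R(T,M)$; an arbitrary surjection $T^n\to M$ does not achieve this. What is actually required is that $\Hom_R(T,M)$ be a finitely generated right $S$-module (indeed that it lie in $\gen^*(S)$, so that the kernels stay finitely generated at every stage), so that the $n$ maps $T\to M$ can be chosen to generate $\Hom_R(T,M)$ over $S$. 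For a general associative ring this is not a formality --- $\Hom_R(T,M)$ is only visible as a kernel of a map $M^{n_0}\to M^{n_1}$ of $R$-modules, which says nothing about its $S$-module generation --- and it is precisely the content of \cite[Corollary 3.2]{S} that the Foxby-type equivalence $\Hom_R(T,-)$ carries $\overline{\CX_T}\cap\gen^*(R)$ into finitely generated $S$-modules. Until you supply that argument (or an explicit citation for it), the $\subseteq$ direction of (v) is unproved. The remaining loose end you mention for $\supseteq$, that $\gen^*(R)$ is closed under summands, extensions, and kernels of epimorphisms between its objects, is standard and harmless.
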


\begin{proof}
(i) This follows from \cite[Proposition 3.1(1)]{S}.

(ii) The claim follows from (i) and \cite[Theorem 3.4(1)]{S}.

(iii) The result follows from (ii) and Lemma \ref{2.5}.

(iv) Since $T_R$ is finitely presented, for any family $\{X_{\gamma}\}_{\gamma\in \Gamma}$ of right $R$-modules and every
integer $i\geq 0$, there is an isomorphism $\Ext_R^i(T,\bigoplus_{\gamma\in \Gamma} X_\gamma)\cong \bigoplus_{\gamma
\in \Gamma} \Ext_R^i(T, X_\gamma).$ Consequently, $\Add(T_R)\subseteq T^{\perp}$. The assertion in (iv) now follows directly
from the definition.

(v) This holds by \cite[Proposition 3.1(1) and Corollary 3.2(1)]{S}.
\end{proof}

\begin{lemma}\label{2.6a}
Let $_ST_R$  be a Wakamatsu bimodule.
\begin{itemize}
\item[(i)] $\mathcal{X}_T$ is closed under direct summands, extensions, and cokernels of monomorphisms.
\item[(ii)] Let $R$ be right Noetherian. Then $\mathcal{X}_T$ is a coresolving subcategory of $\text{mod-}R$.
\item[(iii)]$T_R$ is an Ext-projective $generator$ for $\CX_{T}$. Moreover, if $\CC$ is a subcategory of
$\text{mod-}R$ having $T_R$ as an Ext-projective $generator$, then $\CC\subseteq \CX_{T}$.
\end{itemize}
\end{lemma}

\begin{proof}
(i) By \cite[Lemma 2.2]{W1}, $\gen^*(T)$ is closed under direct summands. Since $\CX_{T}=T^\perp \medcap \gen^*(T)$,
it follows immediately that $\CX_{T}$ is also closed under direct summands. Moreover, \cite[Lemma 2.3(2)]{W1}
implies that $\CX_{T}$ is closed under extensions.

Next, we show that $\mathcal{X}_T$ is closed under cokernels of monomorphisms. Consider the exact sequence
$$0\rt X\rt Y \rt Z\rt 0$$ with $X, Y \in \mathcal{X}_T$. Since $X$ and $Y$ lie in $T^\perp$, from this
exact sequence, we see that $Z$ also belongs to $T^\perp$. Therefore, it remains to verify that $Z\in \gen^*(T)$.

Since $Y\in \gen^*(T)$,
we have an exact sequence $$0 \rt Y_1 \rt T_0 \rt Y \rt 0,$$ where $T_0\in \add(T)$, $Y_1\in \gen^*(T)$ and
$\Ext^1_R(T,Y_1)=0$. Now, we can construct the following pull-back diagram:
\[\xymatrix@C-0.5pc@R-.8pc{ \\ &  0\ar[d]&0\ar[d] &&\\& Y_1\ar[r]^{\text{id}_{Y_1}} \ar[d] &Y_1  \ar[d]&&\\
0\ar[r] & Z^* \ar[r]\ar[d]  & T_0 \ar[d] \ar[r] & Z  \ar[d]^{\text{id}_{Z}}\ar[r]&0&\\0\ar[r]& X
\ar[r] \ar[d] & Y \ar[d]\ar[r] & Z  \ar[r]&
0& \\  &0&0  &&  }\]
As $Y_1, X \in \gen^*(T)$ and $\Ext^1_R(T,Y_1)=0$, applying \cite[Lemma 2.3(2)]{W1} to the left column of this
diagram, yields that  $Z^*\in \gen^*(T)$. Since   $\Ext^1_R(T,Y_1)=0=\Ext^1_R(T,X)$,  the left column of the
above diagram implies that $\Ext^1_R(T,Z^*)=0$. So, by the middle row of the above diagram, we conclude that
$Z\in \gen^*(T)$.

(ii) As $R$ is right Noetherian, every finitely generated module $M_R$ belongs to $\gen^*(R)$. By parts
(ii) and (v) of the above lemma, there exists a module $N_R$ such that $\CX_{T}=N^\perp \medcap \gen^*(R)$.
Hence, $\mathcal{X}_T$ contains all finitely generated injective right $R$-modules. Therefore, by (i), $\CX_{T}$
is a coresolving subcategory of $\text{mod-}R$.

(iii) The statement follows immediately from the definition.
\end{proof}

For the following definition, we refer the reader to \cite{AR2} and \cite{H}.

\begin{definition}\label{2.7}
Let $_ST_R$ be a Wakamatsu tilting bimodule. A module $M_R$ is said to have {\it generalized Gorenstein dimension zero}
with respect to $T$,  if the following conditions are satisfied:
\begin{itemize}
\item[(i)] $M\in \gen^*(R)$.
\item[(ii)] $\Hom_R(M,T)\in \gen^*(S)$.
\item[(iii)] $\Ext^{i\geqslant1}_R(M,T)=0$.
\item[(iv)] $\Ext^{i\geqslant1}_S(\Hom_R(M,T),T)=0$.
\item[(v)] The natural map $\theta_M: M\to \Hom_S(\Hom_R(M,T),T)$ is an isomorphism.
\end{itemize}
We use $\mathcal{G}_T$ to denote the full subcategory of $\text{mod-}R$ consisting of the modules with generalized
Gorenstein dimension zero with respect to $T$.
\end{definition}

Next, we recall the definition of Wakamatsu cotilting modules. In Section 3, we will examine these modules in more detail.

\begin{definition}\label{2.8}
A module $C_R$ is called a {\it Wakamatsu cotilting module} if the following conditions hold:
\begin{itemize}
\item[(i)] $\Ext^{i}_R(C^I,C)=0$ for every set $I$ and all $i\geqslant 1$.
\item[(ii)] There exists an injective cogenerator $Q_R$ for $\text{Mod-}R$ in $\gen^*(\Prod(C_R))$.
\end{itemize}
\end{definition}

Let $T_R$ be a module. We set $\overline{\CY_{T}}={}^\perp{T}\medcap \Cogen^*(T)$ and $\CY_{T}={}^\perp{T} \medcap \cogen^*(T)$.
If $R$ is right Noetherian and $T_R$ is finitely generated, then clearly  $\CY_{T}$ is a full subcategory of $\text{mod-}R$.
In the next two lemmas, we summarize some well-known properties of the classes $\overline{\CY_{T}}$ and $\CY_{T}$.

\begin{lemma}\label{2.9}
Let $C_R$ be a Wakamatsu cotilting module. Then
\begin{itemize}
\item[(i)] there is a module $N_R$ such that $\overline{\CY_{C}}= ~^\perp N$.
\item[(ii)]  $\overline{\CY_{C}}$ is a precovering $\overline{resolving}$ subcategory of $\text{Mod-}R$.
\item[(iii)] $C_R$ is an Ext-injective $\overline{cogenerator}$ for $\overline{\CY_{C}}$. Moreover, if $\CC$ is a
subcategory of $\text{Mod-}R$ having $C_R$ as an Ext-injective $\overline{cogenerator}$, then $\CC\subseteq
\overline{\CY_{C}}$.
\end{itemize}
\end{lemma}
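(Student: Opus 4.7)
The plan is to mirror the proof of Lemma~\ref{2.6} in the dual direction, exchanging the roles of $\Gen^*$ and $\Cogen^*$, of $T^\perp$ and ${}^\perp C$, and of preenveloping and precovering. The statements of Lemma~\ref{2.9} are exactly the cotilting counterparts of parts (ii), (iii), and (iv) of Lemma~\ref{2.6}, so the structure of the argument should parallel that proof step by step.

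For part~(i), I would establish a dual of \cite[Proposition~3.1(1)]{S} characterizing membership in $\overline{\CY_C}$ through $\Ext$-vanishing together with a $\Hom$-$\Hom$ compatibility involving $C$ and the endomorphism ring $S = \End(C_R)$. Applying a cotilting analog of \cite[Theorem~3.4(1)]{S}---or constructing $N$ directly from the injective cogenerator $Q_R$ of Definition~\ref{2.8}(ii) and its $\Prod(C)$-resolution $\cdots \to C^{I_1} \to C^{I_0} \to Q \to 0$, taking $N$ to be the direct sum of $C$ with the syzygies of this resolution---then produces a module $N$ whose $\Ext$-vanishing simultaneously encodes both conditions $M \in {}^\perp C$ and $M \in \Cogen^*(C)$, giving $\overline{\CY_C} = {}^\perp N$.

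For part~(ii), once $\overline{\CY_C} = {}^\perp N$ is in hand, closure of $\overline{\CY_C}$ under direct summands, extensions, and kernels of epimorphisms, together with the containment of all projective modules, is immediate from the long exact sequence of $\Ext$, so $\overline{\CY_C}$ is $\overline{resolving}$. The substantive step is the precovering property, which I expect to be the main obstacle: Lemma~\ref{2.5} (Eklof--Trlifaj) delivers completeness only for cotorsion pairs cogenerated by a set, producing special preenvelopes by the right class rather than precovers by the left class. I would address this by showing that the specific $N$ produced in part~(i) is self-orthogonal and, moreover, that ${}^\perp N = {}^\perp(N^\perp)$, so that Lemma~\ref{2.5} applied to $N$ exhibits $\overline{\CY_C}$ as the left class of a complete cotorsion pair and hence yields special $\overline{\CY_C}$-precovers of every right $R$-module.

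For part~(iii), the module $C$ itself lies in $\overline{\CY_C}$: self-orthogonality $\Ext^i(C,C) = 0$ comes from Definition~\ref{2.8}(i) with $I$ a singleton, and $C \in \Cogen^*(C)$ is witnessed by the trivial coresolution $0 \to C \to C \to 0$, while Ext-injectivity of $C$ in $\overline{\CY_C}$ is then immediate from $\overline{\CY_C} \subseteq {}^\perp C$. To see that $C$ is an Ext-injective $\overline{cogenerator}$, for each $M \in \overline{\CY_C}$ I take the first short exact sequence $0 \to M \to C^I \to M' \to 0$ from its $\Cogen^*(C)$-coresolution and verify via the long exact sequence of $\Ext(-,C)$---using $\Ext^i(C^I, C) = 0$ and $M \in {}^\perp C$, together with the vanishing $\Ext^1(M', C) = 0$ built into the definition of $\Cogen^*(C)$---that $M' \in {}^\perp C$; since $M' \in \Cogen^*(C)$ by construction, $M' \in \overline{\CY_C}$. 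Maximality is proved by iterating the cogenerator sequences in any candidate $\CC$ to build, for each $M \in \CC$, a coresolution by $\Prod(C)$ establishing $M \in \Cogen^*(C)$, while Ext-injectivity of $C$ in $\CC$ gives $M \in {}^\perp C$; hence $M \in \overline{\CY_C}$.
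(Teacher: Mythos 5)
Your parts (i) and (iii) are essentially in line with the paper: for (i) the paper simply quotes \cite[Lemma 3.3(i)]{S} after observing that $\Prod(C_R)$ is preenveloping and that $Q\in\gen^*(\Prod(C_R))$, which is the result your sketch is reconstructing, and for (iii) your argument fills in correctly what the paper dismisses as obvious. The problem is in (ii), which is the only substantive step. You correctly identify that Lemma \ref{2.5} produces the complete cotorsion pair $({}^\perp(M^{\perp}),M^{\perp})$ and that one must somehow identify $\overline{\CY_{C}}={}^\perp N$ with a class of the form ${}^\perp(M^{\perp})$; but your proposed identification, namely the equality ${}^\perp N={}^{\perp}(N^{\perp})$ for the specific $N$ from part (i), is asserted without proof and is not a formal consequence of anything you have established. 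Self-orthogonality of $N$ gives only one inclusion, ${}^{\perp}(N^{\perp})\subseteq{}^{\perp}N$; the reverse inclusion says that every $Y$ with $\Ext^{\geq 1}_R(N,Y)=0$ lies in $({}^{\perp}N)^{\perp}$, which amounts to saying that $({}^{\perp}N,N^{\perp})$ is itself a cotorsion pair --- a genuinely strong property of $N$ that would require a separate argument and is exactly the kind of statement one is trying to avoid proving. (Your stated motivation for this detour is also off: by Definition \ref{2.4}(ii) and Salce's lemma, completeness of a cotorsion pair already yields special precovers by the left class, so there is no preenvelope-versus-precover obstruction to begin with.)

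The paper's fix is purely formal and avoids any special property of $N$: since $N\in({}^{\perp}N)^{\perp}$ tautologically, one always has ${}^{\perp}\bigl(({}^{\perp}N)^{\perp}\bigr)={}^{\perp}N$; setting $M=\bigoplus_{X\in{}^{\perp}N}X$ gives $M^{\perp}=({}^{\perp}N)^{\perp}$, hence ${}^{\perp}N={}^{\perp}(M^{\perp})$, and Lemma \ref{2.5} applied to $M$ (not to $N$) exhibits $({}^{\perp}N,M^{\perp})$ as a complete cotorsion pair, so $\overline{\CY_{C}}={}^{\perp}N$ is special precovering. You should replace your unproved equality with this two-line argument; everything else in your proposal then goes through.
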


\begin{proof}
\begin{itemize}
\item[(i)]  By \cite[Lemma 2.1]{S}, $\Prod(C_R)$ is a preenveloping subcategory of $\text{Mod-}R$. Also, by definition
of Wakamatsu cotilting module, there exists an injective cogenerator $Q$ such that $Q\in \gen^*(\Prod(C_R))$. So, the
desired assertion follows from \cite[Lemma 3.3(1)]{S}.
\item[(ii)]  By $(i)$, it is easy to verify that the subcategory $\overline{\CY_{C}}$ is $\overline{resolving}$. So, we
need only to show that $\overline{\CY_{C}}$ is precovering. To do this, by $(i)$ there is a module $N_R$ such that
$\overline{\CY_{C}}=~^\perp N$. On the other hand, it is easy to see that $~^\perp((^\perp N)^\perp)=~^\perp N$. Put
$M=\bigoplus_{X\in ~ ^\perp N}X$. Then, we have $M^{\perp}=(^\perp N)^\perp$. Thus, $~^\perp(M^{\perp})=~^\perp N$.
Consequently, by Lemma \ref{2.5}, $(^\perp N, M^{\perp})$ is a complete cotorsion pair. Hence, $\overline{\CY_{C}}$
is precovering.
\item[(iii)] In view of condition (i) in Definition \ref{2.8}, this is obvious by the definition of $\overline{\CY_{C}}$.
\end{itemize}
\end{proof}

\begin{lemma}\label{2.10}
Let $T_R$ be a module such that $\Ext^1_R(T,T)=0$.
\begin{itemize}
\item[(i)] ${\CY_{T}}$ is closed under direct summands, extensions, and kernels of epimorphisms.
\item[(ii)]  If $R\in \cogen^*(T)$, then  ${\CY_{T}}$ is a resolving  subcategory of $\text{Mod-}R$.
\item[(iii)] Assume that $T_R$ is self-orthogonal. Then $T_R$ is an Ext-injective cogenerator for ${\CY_{T}}$.
Moreover, if $\CC$ is a subcategory of $\text{Mod-}R$ that has $T_R$ as an Ext-injective cogenerator, then $\CC
\subseteq {\CY_{T}}$.
\item[(iv)] If $T_R$ is a Wakamatsu tilting module, then $\CY_{T}=\mathcal{G}_T$.
\end{itemize}
\end{lemma}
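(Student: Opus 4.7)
The plan is to dispose of (i), (ii), (iii) by dualizing Lemma~\ref{2.6a} and using the formal properties of $\CY_T={}^\perp T\cap\cogen^*(T)$, and then to tackle (iv), which is the main obstacle because it identifies two a priori different categories via the bimodule structure of Lemma~\ref{2.2}.

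For (i), closure of ${}^\perp T$ under direct summands, extensions, and kernels of epimorphisms is standard from the long exact sequence of $\Ext^*_R(-,T)$ and the hypothesis $\Ext^1_R(T,T)=0$ (with higher self-orthogonality propagated by dimension shift through $\add(T)$-modules). Closure of $\cogen^*(T)$ under summands and extensions is the formal dual of the $\gen^*$-statements cited from \cite[Lemmas 2.2 and 2.3(2)]{W1}. The new step is closure under kernels of epimorphisms: given $0\to X\to Y\to Z\to 0$ with $Y,Z\in\CY_T$, I take the first step $0\to Y\to T^0\to Y^1\to 0$ of the $\cogen^*$-coresolution of $Y$, with $T^0\in\add(T)$ and $\Ext^1_R(Y^1,T)=0$, and form the push-out along $X\hookrightarrow Y$. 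The middle row of the resulting $3\times 3$ diagram reads $0\to X\to T^0\to W\to 0$, where $W$ sits in an extension $0\to Z\to W\to Y^1\to 0$; both $\cogen^*$-membership and $\Ext^1_R(-,T)$-vanishing pass to $W$ from $Z\in\CY_T$ and $Y^1$, so iterating produces a $\cogen^*$-coresolution of $X$, while $X\in{}^\perp T$ is automatic from the Ext long exact sequence.

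For (ii), the hypothesis $R\in\cogen^*(T)$ together with the triviality $R_R\in{}^\perp T$ places $R$ in $\CY_T$; the closure properties from (i) then upgrade this to containment of all finitely generated projective right $R$-modules, meeting the paper's definition of a resolving subcategory of $\text{Mod-}R$. For (iii), $T$ sits in $\CY_T$ via the trivial coresolution $0\to T\to T\to 0\to\cdots$ together with self-orthogonality, and is Ext-injective there by the very definition $\CY_T\subseteq{}^\perp T$. Given $M\in\CY_T$, the first term $0\to M\to T^0\to M'\to 0$ of its $\cogen^*$-coresolution satisfies $M'\in\CY_T$ (the membership $M'\in{}^\perp T$ comes from $\Ext^1_R(M',T)=0$ in the definition of $\cogen^*$ combined with self-orthogonality of $T^0$ to dimension-shift the higher Ext), so $T$ is an Ext-injective cogenerator. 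For the maximality clause, given $\CC$ with $T$ as Ext-injective cogenerator, iteratively splicing the defining short exact sequences for each $M\in\CC$ produces an $\add(T)$-coresolution of $M$ with cokernels in $\CC\subseteq{}^\perp T$, which is exactly the data witnessing $M\in\cogen^*(T)\cap{}^\perp T=\CY_T$.

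Part (iv) is the main obstacle, and is where I would use the Wakamatsu tilting bimodule ${}_ST_R$ of Lemma~\ref{2.2}. For $\CY_T\subseteq\mathcal{G}_T$, I take $M\in\CY_T$ with its coresolution $0\to M\to T^{n_0}\to T^{n_1}\to\cdots$ and apply $\Hom_R(-,T)$; the combination of $M\in{}^\perp T$ and Ext-vanishing on the cokernels makes the resulting complex $\cdots\to S^{n_1}\to S^{n_0}\to\Hom_R(M,T)\to 0$ exact, a finitely generated free left $S$-resolution, which directly supplies condition (ii) of Definition~\ref{2.7}. Re-applying $\Hom_S(-,T)$ and comparing to the original coresolution delivers (iv) (exactness in higher degrees) and (v) (kernel canonically identified with $M$ via $\theta_M$); condition (iii) is immediate. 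For the reverse inclusion $\mathcal{G}_T\subseteq\CY_T$, starting from a $\gen^*(S)$-resolution of $\Hom_R(M,T)$, I apply $\Hom_S(-,T)$ and invoke (iv) for exactness and (v) to identify the kernel with $M$, yielding an $\add(T)$-coresolution whose cokernels have vanishing $\Ext^1_R(-,T)$ by splicing and using the biduality already established. The subtlest technical point is condition (i) of Definition~\ref{2.7} in the direction $\CY_T\subseteq\mathcal{G}_T$: lifting the finitely generated free left $S$-resolution of $\Hom_R(M,T)$ back to a finitely generated projective $R$-resolution of $M$ genuinely requires the $\gen^*(R)$-structure of $T$ from Definition~\ref{2.1}(i), which I would propagate by splicing the $\gen^*(R)$-presentations of each $T^{n_i}\in\add(T)$ against the coresolution of $M$ via a standard double-complex construction.
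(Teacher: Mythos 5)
Parts (i)--(iii) of your proposal are correct and follow the paper's own route: the paper proves (i) by exactly the dual of the pull-back argument of Lemma~\ref{2.6a} (which is your push-out argument), derives (ii) from $R\in\cogen^*(T)\cap{}^{\perp}T$ and (i), and declares (iii) obvious (your splicing argument is the intended one). The divergence, and the problem, is in (iv), where the paper simply invokes \cite[Propositions 1 and 3]{W2} while you attempt a direct proof.

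The gap is in your verification of condition (i) of Definition~\ref{2.7}, namely that $M\in\gen^*(R)$ for $M\in\CY_T$. Your proposed mechanism --- splicing finitely generated projective resolutions of the terms $T^i\in\add(T)$ of the coresolution $0\to M\to T^0\to T^1\to\cdots$ against that coresolution ``via a standard double-complex construction'' --- does not produce a resolution of $M$ by finitely generated projectives. The coresolution runs to the right and the projective resolutions run to the left, so the total complex of the resulting double complex is unbounded in both directions and its terms are infinite direct sums $\bigoplus_{j}P^{j}_{j+n}$; it is quasi-isomorphic to $M$ but is in no sense a degreewise finite projective resolution. The more elementary attempt also fails: from $0\to M\to T^0\to W_1\to 0$ one can only conclude that $M$ is $\mathrm{FP}_n$ provided $W_1$ is $\mathrm{FP}_{n+1}$, which requires $W_2$ to be $\mathrm{FP}_{n+2}$, and so on --- the regress along the infinite coresolution never closes. (Over an Artin algebra, or any right Noetherian ring, $M\in\gen^*(R)$ is automatic for finitely generated $M$, which is why Wakamatsu's original argument does not confront this; for a general associative ring it is a genuine condition, and a coresolution of $M$ by modules of $\add(T)$ gives a priori no finiteness information about $M$ on the projective side --- $M$ is merely a kernel of a map between finitely generated modules.) You would need a different input here, e.g.\ the actual content of \cite[Propositions 1 and 3]{W2} as adapted by the authors, or an argument transporting the finitely generated projective left $S$-resolution of $\Hom_R(M,T)$ across the duality $\Hom_S(-,T)$ --- but that transport is precisely the statement being proved and cannot be assumed. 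The remaining steps of your (iv) (conditions (ii)--(v) of Definition~\ref{2.7}, and the reverse inclusion $\mathcal{G}_T\subseteq\CY_T$) are essentially sound reconstructions of Wakamatsu's argument, though the $\Ext^1_R(\cok f^i,T)=0$ claim for the cokernels $\Hom_S(\Omega_i N,T)$ in the reverse inclusion deserves to be written out rather than waved at.
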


\begin{proof}
(i) By \cite[Lemma 2.3(1)]{W1} and \cite[Lemma 2.2]{W1}, we conclude that ${\CY_{T}}$ is closed under direct
summands and extensions. Furthermore, by an argument dual to that used in the proof of Lemma \ref{2.6a}(i), we
obtain that ${\CY_{T}}$ is also closed under kernels of epimorphisms.

(ii) Since $R \in \cogen^*(T)$, it follows that $R \in \CY_{T}$. Hence, by (i), $\CY_{T}$ contains all finitely
generated projective right $R$-modules. In view of (i), this shows that $\CY_{T_R}$ is a resolving subcategory
of $\text{Mod-}R$.

(iii) is obvious.

(iv) The assertion follows from \cite[Propositions 1 and 3]{W2}, whose arguments remain valid for any associative ring.
\end{proof}

We conclude the preliminaries by recalling the well-known projectivization theorem, which can be readily verified. For a
detailed proof in the case when $R$ is an Artin algebra, see, for example, \cite[{\bf II}, Proposition 2.1]{ARS}.

\begin{lemma}\label{2.11}
Let $M\in \text{mod-}R$ and $S=\End(M_R)$.
\begin{itemize}
\item[(i)]	The functor $\Hom_R(M,-):\text{mod-}R\longrightarrow \text{mod-}S$ induces an equivalence of categories between
$\add(M_R)$ and the category of finitely generated projective right $S$-modules.
\item[(ii)] The functor $\Hom_R(-,M):\text{mod-}R\longrightarrow S\text{-mod}$ induces a duality of categories between
$\add(M_R)$ and the category of finitely generated projective left $S$-modules.
\end{itemize}
\end{lemma}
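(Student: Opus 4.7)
The plan is to give the standard Yoneda/idempotent-splitting argument, which reduces everything to the tautological identity $\Hom_R(M,M) = S$.

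For part (i), I would first observe that, regarding $\Hom_R(M,M)$ as a right $S$-module via precomposition, it is precisely the free module $S_S$. Since $\Hom_R(M,-)$ is additive, this gives $\Hom_R(M,M^n) \cong S^n$ for every $n \ge 1$, and since the functor preserves direct summands, the image of $\add(M_R)$ lies in the category $\mathrm{proj}\text{-}S$ of finitely generated projective right $S$-modules. For fully faithfulness on $\add(M_R)$, I would check first that the natural evaluation map
\[\Hom_R(N,N') \longrightarrow \Hom_S(\Hom_R(M,N),\Hom_R(M,N'))\]
is an isomorphism when $N = N' = M$ (both sides equal $S$, and the map is the identity); by additivity, this extends to $N,N' \in \add(M^{n})$, and then to all $N,N' \in \add(M_R)$ by splitting idempotents.

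For essential surjectivity, any finitely generated projective right $S$-module $P$ is isomorphic to $eS^n$ for some idempotent $e \in M_n(S)$. Using the ring isomorphism $M_n(S) \cong \End_R(M^n)$ supplied by the fully faithful step just established, the idempotent $e$ corresponds to an idempotent endomorphism $\widetilde{e}$ of $M^n$; splitting $\widetilde{e}$ in $\rmod R$ yields a module $N := \im \widetilde{e} \in \add(M_R)$ with $\Hom_R(M,N) \cong eS^n \cong P$. This completes (i).

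Part (ii) is the formal dual: I would replace the covariant functor $\Hom_R(M,-)$ by the contravariant functor $\Hom_R(-,M)$, which now lands in $S\text{-mod}$ (the left $S$-action being postcomposition). The identity $\Hom_R(M,M) = {}_SS$ again holds, and the same additivity plus idempotent-splitting argument yields the stated duality. The only point requiring any care is keeping the left/right $S$-module structures straight on the various Hom groups; there is no substantive obstacle beyond this bookkeeping, which is why the authors remark that the result can be readily verified.
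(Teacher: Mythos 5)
Your argument is correct and is exactly the standard projectivization argument that the paper itself invokes by citing \cite[{\bf II}, Proposition 2.1]{ARS} without giving details: reduce to $\Hom_R(M,M)=S$, use additivity to handle $M^n$ and its summands, and recover an arbitrary finitely generated projective $eS^n$ by splitting the corresponding idempotent of $\End_R(M^n)$ inside $\text{mod-}R$. Part (ii) is the formal dual as you say, so nothing further is needed.
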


\section{Wakamatsu cotilting modules}

We begin this section by introducing the following natural generalization of the notion of Artin algebras.

\begin{definition}\label{3.1}
A ring $R$ is called a {\it complete algebra} if there exists a Noetherian commutative semi-local complete ring $A$ such
that $R$ is a Noetherian $A$-algebra.
\end{definition}

Over an Artin algebra $R$, by \cite[Proposition 2.2]{BS}, it is known that a finitely generated module $C_R$ is
Wakamatsu cotilting if and only if it is Wakamatsu tilting. We generalize this to any complete algebra.

\begin{lemma}\label{3.2}
Let $C_R$ be a Wakamatsu cotilting module and set $S=\End(C_R)$. Assume that the rings $R$ and $S$ are right and left coherent,
respectively, and that both modules $C_R$ and $_S C$ are finitely presented. Then $C_R$ is a Wakamatsu tilting module.
\end{lemma}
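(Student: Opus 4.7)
The strategy is to apply the symmetric characterization of Wakamatsu tilting bimodules in Lemma~\ref{2.2}(iii). Since $S=\End(C_R)$ is given, the task reduces to verifying: (1) $C_R\in\gen^*(R)$ and ${}_SC\in\gen^*(S)$; (2) the natural ring map $R\to\End({}_SC)$ is an isomorphism; and (3) both $C_R$ and ${}_SC$ are self-orthogonal. The finite presentation of $C_R$ over right coherent $R$, together with the triviality of $\Ext^{\geq 1}_R(R,-)$, immediately yields $C_R\in\gen^*(R)$, and the symmetric argument gives ${}_SC\in\gen^*(S)$; the self-orthogonality of $C_R$ is obtained by specializing the Wakamatsu cotilting condition $\Ext^{\geq 1}_R(C^I,C)=0$ to $I=\{\ast\}$.

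The substantive content is thus the identification $R\cong\End({}_SC)$ and the self-orthogonality of ${}_SC$. I would first construct the key coresolution: applying $\Hom_R(-,C)$ to a finitely generated projective resolution $\cdots\to R^{m_1}\to R^{m_0}\to C_R\to 0$ and using the self-orthogonality of $C_R$ produces an exact sequence of left $S$-modules
\[
0\to {}_SS\to {}_SC^{m_0}\to {}_SC^{m_1}\to\cdots,
\]
whose cokernels are identified by dimension shifting with $\Hom_R(\Omega^i C_R,C)$, where $\Omega^i C_R$ is the $i$-th syzygy of $C_R$ (finitely generated because $R$ is right coherent). Injectivity of the natural ring map $\theta:R\to\End({}_SC)$, $r\mapsto (c\mapsto cr)$, follows from faithfulness of the bimodule $C$: any $r\in R$ with $Cr=0$ annihilates every object in $\Prod(C)$, and since the injective cogenerator $Q$ is a surjective image of some $T_0\in\Prod(C)$ by the definition of $\gen^*(\Prod(C))$, such an $r$ would annihilate $Q$, contradicting the cogenerator property.

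The main obstacle is the simultaneous proof of surjectivity of $\theta$, self-orthogonality of ${}_SC$, and the cokernel Ext-vanishing $\Ext^1_S(\mathrm{cok},{}_SC)=0$ required for the coresolution above to be a genuine $\cogen^*({}_SC)$-coresolution. The plan is to apply $\Hom_S(-,{}_SC)$ to the coresolution, obtaining a candidate complex $\cdots\to\End({}_SC)^{m_1}\to\End({}_SC)^{m_0}\to C\to 0$ in $\text{Mod-}R$, and compare it with the original projective resolution of $C_R$ through the natural double-dual transformation $\theta_M:M\to\Hom_S(\Hom_R(M,C),C)$. Here the full Wakamatsu cotilting Ext-vanishing $\Ext^j_R(C^I,C)=0$ for every set $I$ and every $j\geq 1$—strictly stronger than self-orthogonality of $C_R$ alone—is used to transfer the $R$-side Ext-vanishing into $S$-side Ext-vanishing by dimension-shifting along the coresolution; this simultaneously delivers self-orthogonality of ${}_SC$, the cokernel condition, and, via a Five Lemma comparison of the two complexes, the isomorphism $R\cong\End({}_SC)$. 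With items (1), (2), (3) of Lemma~\ref{2.2}(iii) all verified, the implication (iii)$\Rightarrow$(i) of Lemma~\ref{2.2} concludes that $C_R$ is Wakamatsu tilting.
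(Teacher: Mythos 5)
The first thing to say is that the paper does not actually prove this lemma: its entire proof is the citation ``See \cite[Lemma 3.6]{DMT}'', so there is no internal argument to compare yours against, and your proposal has to stand on its own. Your preparatory steps are fine: $C_R\in\gen^*(R)$ and ${}_SC\in\gen^*(S)$ follow from coherence plus finite presentation (the Ext-condition in the definition of $\gen^*$ is vacuous for resolutions by finitely generated projectives); self-orthogonality of $C_R$ is the case $I=\{\ast\}$ of Definition \ref{2.8}(i); the coresolution $0\to {}_SS\to C^{m_0}\to C^{m_1}\to\cdots$ is correctly obtained; and your injectivity argument for $\theta\colon R\to\End({}_SC)$ via the injective cogenerator $Q$ is sound. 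Reducing everything to Lemma \ref{2.2}(iii) (equivalently, to Lemma \ref{3.3}) is also the right framing.

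The gap is in your last paragraph, which is where the entire content of the lemma sits. You claim that surjectivity of $\theta$, self-orthogonality of ${}_SC$, and the cokernel condition all follow from the vanishing $\Ext^{j\geq 1}_R(C^I,C)=0$ by dimension-shifting along the coresolution together with a Five Lemma comparison --- that is, from Definition \ref{2.8}(i) alone, condition (ii) having been spent only on injectivity of $\theta$. This cannot work. Let $R$ be the algebra of upper triangular $2\times 2$ matrices over a field $k$ and $C=e_{11}R$ its two-dimensional indecomposable projective right module, so $S=\End(C_R)=k$. Both rings are Artinian, $C_R$ and ${}_SC$ are finitely presented, every product $C^I$ is projective (products of projective right modules over an Artin algebra are projective), so $\Ext^{j\geq 1}_R(C^I,C)=0$ for every set $I$; $C$ is faithful, so $\theta$ is injective; and ${}_SC=k^2$ is trivially self-orthogonal. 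Yet $\theta\colon R\to\End({}_kk^2)\cong M_2(k)$ is not surjective and $C$ is not Wakamatsu tilting. Every hypothesis your final paragraph invokes holds in this example; the only hypothesis of the lemma that fails is $Q\in\gen^*(\Prod(C_R))$. So the cogenerator condition must carry the load for surjectivity of $\theta$ (and in general for the self-orthogonality of ${}_SC$), and your sketch provides no mechanism by which it does. A workable route is to dualize the finitely generated projective resolution $S^{n_\bullet}\to{}_SC$ into $Q$: the natural isomorphism $\Hom_R(\Hom_S(S^{n_i},C),Q)\cong \Hom_R(C,Q)\otimes_SS^{n_i}$ converts $\Ext^i_S(C,C)$ and the cokernel of $\theta$ into $\Tor^S_i(\Hom_R(C,Q),C)$ and the failure of the evaluation map $\Hom_R(C,Q)\otimes_SC\to Q$ to be an isomorphism, and these are then computed from the $\Prod(C_R)$-resolution of $Q$, using that left coherence of $S$ makes $\Hom_R(C,C^I)\cong S^I$ flat and that finite presentation of ${}_SC$ makes $S^I\otimes_SC\to C^I$ an isomorphism. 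As written, your proof does not close.
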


\begin{proof}
See \cite[Lemma 3.6]{DMT}.
\end{proof}

To present the main result of this section, we need the notion of product-complete modules. We now recall the
definition of this concept.

\begin{definition}\label{3.5}
A module $C_R$ is called {\it product-complete} if $\Prod(C_R)\subseteq \Add(C_R)$.
\end{definition}

As mentioned in \cite[Remark 3.5]{MR}, a module $C_R$ is product-complete if and only if $\Prod(C_R)=\Add(C_R)$. For a
module $C_R$, where the ring $S=\End(C_R)$ is left Noetherian, by \cite[Corollary 4.4]{KS}, $C_R$ is product-complete if
and only if $_SC$ has finite length.

\begin{theorem}\label{3.6}
Let $R$ be a complete algebra, and $T_R$ be a finite-length module. Then $T_R$ is a Wakamatsu cotilting module if and
only if it is a Wakamatsu tilting module.
\end{theorem}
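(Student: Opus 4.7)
The plan is to prove the biconditional by first establishing a preliminary fact about the endomorphism ring of a finite-length module over a complete algebra, and then deducing the two implications.

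\textbf{Preliminary fact.} Setting $S = \End(T_R)$, I first claim that the finite-length hypothesis on $T_R$ together with the complete-algebra structure on $R$ forces $S$ to be a complete algebra over the same base $A$ (hence left Noetherian, and in particular left coherent) and forces $_ST$ to have finite length (hence to be finitely presented). The tool is Matlis duality: after decomposing the complete semi-local Noetherian ring $A$ into a finite product of complete local Noetherian rings and taking $E_A = \bigoplus_{\mathfrak{m}} E_A(A/\mathfrak{m})$, the functor $D = \Hom_A(-,E_A)$ provides a duality between Artinian and finitely generated $A$-modules that is compatible with the $R$-module structure. A finite $R$-length module $T$ is Artinian over $A$, so $D(T)$ is finitely generated over $R$; then $S \cong \End_R(D(T))^{\op}$ inherits a Noetherian $A$-algebra structure, and a symmetric argument yields the finite length of $_ST$.

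\textbf{Cotilting implies tilting.} Given the preliminary fact, this is an immediate application of Lemma \ref{3.2}: $R$ is right Noetherian and hence right coherent; $T_R$ is finitely generated over a Noetherian ring, hence finitely presented; $S$ is left coherent; and $_ST$ is finitely presented.

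\textbf{Tilting implies cotilting.} The preliminary fact together with \cite[Corollary 4.4]{KS} (recalled after Definition \ref{3.5}) gives product-completeness of $T_R$, that is, $\Prod(T_R) = \Add(T_R)$. I then check the two clauses of Definition \ref{2.8}. For clause (i): for any set $I$, $T^I \in \Prod(T_R) = \Add(T_R)$ is a direct summand of some $T^{(J)}$, and self-orthogonality of $T_R$ gives $\Ext^i_R(T^{(J)},T) \cong \prod_J \Ext^i_R(T,T) = 0$ for $i \geq 1$, so $\Ext^i_R(T^I,T) = 0$ by passage to summands. For clause (ii): the equality $\Prod(T_R) = \Add(T_R)$ together with the compatibility of $\Ext^1$ with direct sums in the first variable gives $\gen^*(\Prod(T_R)) = \Gen^*(T_R)$; by Lemma \ref{2.6}(iii), $\overline{\CX_T}$ is $\overline{coresolving}$ and contained in $\Gen^*(T_R)$, so every injective right $R$-module lies in $\Gen^*(T_R) = \gen^*(\Prod(T_R))$; any injective cogenerator $Q_R$ of $\text{Mod-}R$ (for instance the direct sum of injective envelopes of the finitely many simple right $R$-modules, available because $R/J(R)$ is semisimple Artinian for a complete algebra) therefore satisfies $Q_R \in \gen^*(\Prod(T_R))$, as required.

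\textbf{Main obstacle.} The tilting/cotilting verifications in both directions collapse to short formal manipulations once the preliminary fact is in hand. The real work is the preliminary fact itself: the Matlis-dual transfer of the complete-algebra property from $R$ to $S$, the semi-local-to-local reduction, and the verification that the induced $A$-algebra structure on $S$ is indeed Noetherian. This is where I expect the technical weight of the proof to live.
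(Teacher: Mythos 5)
Your proof is correct, and the overall skeleton (preliminary transfer of the complete-algebra and finite-length properties to $S={\rm End}(T_R)$, then Lemma \ref{3.2} for one direction and product-completeness via \cite[Corollary 4.4]{KS} for the other, including the verification of clause (i) of Definition \ref{2.8}) matches the paper's proof step for step; the paper simply cites \cite[Proposition 4.2(c),(d)]{A} for the preliminary fact where you sketch the Matlis-duality argument behind it. The genuine divergence is in clause (ii). The paper builds an explicit injective cogenerator $Q=D(R)$ with $D=\Hom_A(-,\mathscr{E})$, proves ${}_RR\in\cogen^*({}_RD(T))$ by combining the $\Ext$-transfer $\Ext^i_S(D(T),D(T))\cong\Ext^i_S(T,T)$ with the symmetric characterizations of Lemmas \ref{3.3} and \ref{3.4}, and then applies $D$ to the resulting coresolution of $R$ to land $Q$ in $\gen^*(T)\subseteq\gen^*(\Prod(T_R))$. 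You instead observe that every injective module lies in $\overline{\CX_T}$ because $\overline{\CX_T}=N^\perp$ is $\overline{\text{coresolving}}$ (Lemma \ref{2.6}(ii),(iii), i.e.\ Sun's theorem), hence in $\Gen^*(T)$, which equals $\gen^*(\Prod(T_R))$ by product-completeness. Your route is shorter, works for an arbitrary injective cogenerator, and bypasses Lemmas \ref{3.3} and \ref{3.4} entirely, at the cost of invoking the nontrivial structural result $\overline{\CX_T}=N^\perp$ from \cite{S}; the paper's route is more self-contained and constructive, exploiting the duality $D$ that is available precisely because $R$ is a complete algebra. Both arguments hinge on product-completeness in the same two places, so the trade-off is purely in which external input carries the weight of clause (ii).
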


\begin{proof}
Let $S=\End(T_R)$. Since $R$ is a complete algebra, there exists a Noetherian commutative semi-local complete ring $A$ and
a ring homomorphism $f:A\to R$ such that $R$ is finitely generated as an $A$-module via $f$. By \cite[Proposition 4.2(c),(d)]{A},
it follows that $S$ is also a complete $A$-algebra and $_ST$ also has finite length. Hence, by Lemma \ref{3.2}, it suffices
to show that if $T_R$ is Wakamatsu tilting, then it is also Wakamatsu cotilting.

Assume that $T_R$ is a Wakamatsu tilting module. Then $T_R$ is self-orthogonal, $T_R \in \gen^*(R)$, and $R_R \in \cogen^*(T)$.
Since $S$ is left Noetherian and $_S T$ has finite length, \cite[Corollary 4.4]{KS} yields that $T_R$ is product-complete.
Thus, for any set $I$, there is a set $J$ such that $T^I$ is isomorphic to a direct summand of $T^{(J)}$. For any $i>0$, as $$\Ext^i_R(T^{(J)},T)\cong \Ext^i_R(T,T)^J=0,$$ it follows that $\Ext^i_R(T^I,T)=0$.

Let $\mathscr{E}$ be the minimal injective cogenerator of $A$, and set $D(-)=\Hom_A(-,\mathscr{E})$. Clearly, $Q=D(R)$ is
an injective cogenerator for $\text{Mod-}R$. We can complete the argument by showing that $Q\in \gen^*(\Prod(T_R))$.

For each integer $i\geq 0$, it is straightforward to see that $$\Ext^i_R(D(X),D(Y))\cong \Ext^i_R(Y,X)$$ for all $X_R,
Y_R\in\gen^*(R)$, and $$\Ext^i_S(D(X),D(Y))\cong \Ext^i_S(Y,X)$$ for all $_SX, _SY\in\gen^*(S)$.

Since $_ST_R$ is a Wakamatsu tilting module, Lemma \ref{2.2} implies that $R\cong \End(_ST)$ and $_ST$ is self-orthogonal.
Also, we know that $S=\End(T_R)$ and $T_R$ is self-orthogonal. Hence, by the above isomorphisms, we obtain $S\cong \End(_RD(T))$,
$R\cong \End(D(T)_S)$, and $_RD(T)$ and $D(T)_S$ are self-orthogonal. Moreover, as $T_R$ and $_ST$ have finite length,
the modules $_RD(T)$ and $D(T)_S$ also have finite length. In particular, $_RD(T)\in \gen^*(R)$ and $D(T)_S\in \gen^*(S)$.
Thus, applying Lemma \ref{2.2} once again, we conclude that $_RD(T)_S$ is a Wakamatsu tilting module.

Since $_RD(T)_S$ is a Wakamatsu tilting module, we have $_RR \in \cogen^*(D(T))$. Thus, there exists an exact sequence
$$0\longrightarrow R\st{f^{-1}}\longrightarrow X^0\st{f^0}\longrightarrow X^1 \st{f^1}\longrightarrow \cdots$$ with each
$X^i\in \add(_RD(T))$ and $\Ext^1_R(\cok \ f^i,D(T))=0$ for all $i\geqslant -1$. Applying the functor $D$ to this exact
sequence yields $Q \in \gen^*(T)$. Since $T_R$ is product-complete, we have $T^{\perp_1}=\Prod(T_R)^{\perp_1}$, and thus
$\gen^*(T)\subseteq \gen^*(\Prod(T_R))$. Therefore, $Q \in \gen^*(\Prod(T_R))$.
\end{proof}

We conclude this section with the following immediate corollary.

\begin{corollary}\label{3.7}
Let $R$ ba an Artin algebra and let $T_R$ be a finitely generated module.  Then $T_R$ is a Wakamatsu cotilting
module if and only if it is a Wakamatsu tilting module.
\end{corollary}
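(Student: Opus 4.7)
The plan is to derive Corollary \ref{3.7} directly from Theorem \ref{3.6} by verifying that both hypotheses specialize to the Artin algebra setting. The two things to check are (a) that every Artin algebra is a complete algebra in the sense of Definition \ref{3.1}, and (b) that every finitely generated module over an Artin algebra has finite length. Once these are in hand, Corollary \ref{3.7} becomes an immediate instance of Theorem \ref{3.6} with no extra work.

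For (a), I would recall that an Artin algebra is by definition a Noetherian $A$-algebra for some commutative Artinian ring $A$ that is finitely generated as an $A$-module. A commutative Artinian ring is Noetherian, and by the structure theorem it is a finite product of local Artinian rings, so it is semi-local. Moreover, any commutative Artinian ring is complete with respect to its Jacobson radical topology, since the Jacobson radical is nilpotent. Thus $A$ is a Noetherian commutative semi-local complete ring, which realizes $R$ as a complete algebra.

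For (b), since $R$ is finitely generated as a module over the Artinian ring $A$, the ring $R$ is itself an Artinian (and hence Noetherian) $A$-module, so $R$ is both left and right Artinian as a ring. Consequently every finitely generated right $R$-module is Artinian and Noetherian, hence of finite length. Therefore the given $T_R$ satisfies the finite-length hypothesis of Theorem \ref{3.6}.

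Combining these two observations, the hypotheses of Theorem \ref{3.6} are met, and the equivalence ``$T_R$ is Wakamatsu cotilting $\iff$ $T_R$ is Wakamatsu tilting'' follows at once. There is no genuine obstacle in this argument; the only point one might double-check is the completeness of a commutative Artinian ring, which is a standard consequence of the nilpotency of its Jacobson radical (so the $\mathfrak{m}$-adic topology is discrete on each local factor and the ring coincides with its completion).
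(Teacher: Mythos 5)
Your proposal is correct and is exactly the argument the paper intends: the corollary is stated as an immediate consequence of Theorem \ref{3.6}, relying precisely on the two facts you verify, namely that a commutative Artinian base ring is Noetherian, semi-local and complete (so every Artin algebra is a complete algebra in the sense of Definition \ref{3.1}) and that finitely generated modules over an Artin algebra have finite length. Nothing further is needed.
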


\section{Invertible modules}

Theorem \ref{4.9} is the main result of this section. To present it, we first recall some definitions. Moreover, for
its proof, we will need Lemmas \ref{4.4} and \ref{4.6}. Let us begin with the following useful observation.

\begin{facts}\label{4.1}
Let $_SL_{R'}$, $_SM_R$ and $_{S'}N_R$ be three bimodules. The following hold:
\begin{itemize}
\item[(i)] $\Hom_R(M_R, N_R)$ is naturally an $(S',S)$-bimodule. Similarly, $\Hom_S({}_S L, {}_S M)$ is naturally an
$(R',R)$-bimodule.
\item[(ii)] The natural map $\eta_{M_R}:S\rt \End(M_R)$ is an $(S,S)$-bihomomorphism.
\item[(iii)] The natural map $\eta_{_SM}:R\rt \End(_SM)$ is an $(R,R)$-bihomomorphism.
\end{itemize}
\end{facts}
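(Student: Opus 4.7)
The plan is to simply unpack the definitions and verify the bimodule axioms; everything here is essentially a routine bookkeeping exercise. Since the statement is the initial ``Facts'' block of a new section, my proof proposal is just to explain which actions to write down and what must be checked, rather than grind through the verifications.

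For part (i), given $_{S}M_{R}$ and $_{S'}N_{R}$, I would define two scalar actions on $\Hom_{R}(M_{R},N_{R})$: for $s'\in S'$, $s\in S$, and $f\colon M\to N$ an $R$-homomorphism, set
\[
(s'\cdot f)(m)=s'f(m),\qquad (f\cdot s)(m)=f(sm).
\]
The first step is to check that both $s'\cdot f$ and $f\cdot s$ are still $R$-linear: for $s'\cdot f$ this uses that the $S'$- and $R$-actions on $N$ commute, and for $f\cdot s$ it uses that the $S$- and $R$-actions on $M$ commute. Next I would verify that each defines a (left or right) module structure, i.e.\ compatibility with addition and with products in $S'$ and $S$. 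Finally I would verify the bimodule compatibility $(s'\cdot f)\cdot s=s'\cdot (f\cdot s)$, which reduces to the associativity $s'(f(sm))=s'(f\cdot s)(m)$. The second assertion, about $\Hom_{S}({}_{S}L,{}_{S}M)$, is handled symmetrically by defining $(r'\cdot g)(l)=g(l)\cdot (\text{something})$—more precisely, since $L$ is $(S,R')$ and $M$ is $(S,R)$, one sets $(r\cdot g)(l)=g(l)r$ and $(g\cdot r')(l)=g(lr')$, and checks the analogous three conditions.

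For parts (ii) and (iii), the map $\eta_{M_{R}}\colon S\to\End(M_{R})$ is defined by $\eta_{M_{R}}(s)(m)=sm$. I would first check that $\eta_{M_{R}}(s)$ really is $R$-linear, which is immediate from $(sm)r=s(mr)$. Then, applying the $(S,S)$-bimodule structure on $\End(M_{R})=\Hom_{R}(M_{R},M_{R})$ supplied by part (i) (with $S'=S$ and $N=M$), the required bihomomorphism property $\eta_{M_{R}}(s_{1}ss_{2})=s_{1}\cdot\eta_{M_{R}}(s)\cdot s_{2}$ unwinds to the identity $(s_{1}ss_{2})m=s_{1}(s(s_{2}m))$, which is just associativity of the left $S$-action on $M$. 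Part (iii) is the mirror statement for $\eta_{{}_{S}M}\colon R\to\End({}_{S}M)$, $r\mapsto (m\mapsto mr)$, verified in exactly the same way using associativity of the right $R$-action.

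There is no genuine obstacle here; the only thing to be careful about is consistent side conventions when writing the bimodule structures produced by (i), since one must keep track of which side of $\Hom$ each scalar acts on. I would therefore present the argument as a single short paragraph listing the actions in (i) and then noting that (ii) and (iii) are immediate specializations.
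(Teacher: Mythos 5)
The paper states these Facts without proof, so the only question is whether your direct verification goes through; your overall plan (write down the actions, check $R$- resp.\ $S$-linearity, check the module axioms and the compatibility $(s'\cdot f)\cdot s=s'\cdot(f\cdot s)$, then read off (ii) and (iii) from associativity of the scalar actions) is exactly the right one, and your treatment of $\Hom_R(M_R,N_R)$ and of parts (ii)--(iii) is correct.

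There is, however, a concrete error in the one place you yourself flagged as delicate: the formulas you give for the structure on $\Hom_S({}_SL,{}_SM)$ are on the wrong sides. You set $(r\cdot g)(l)=g(l)r$ and $(g\cdot r')(l)=g(lr')$, but neither of these is an associative action when the rings are noncommutative: for the first, $((r_1r_2)\cdot g)(l)=g(l)r_1r_2$ while $(r_1\cdot(r_2\cdot g))(l)=g(l)r_2r_1$, and the second fails symmetrically. The ``analogous three conditions'' you propose to check would therefore not hold. The correct convention --- and the one the statement asserts, namely an $(R',R)$-bimodule --- is that the contravariant variable $L$ contributes the \emph{left} action by precomposition, $(r'\cdot g)(l)=g(lr')$, and the covariant variable $M$ contributes the \emph{right} action by postcomposition, $(g\cdot r)(l)=g(l)r$; with these choices the associativity checks do go through. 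This is worth fixing not just for part (i): part (iii) is the specialization $L=M$, $R'=R$ of this structure, so the bihomomorphism identity $\eta_{{}_SM}(r_1rr_2)=r_1\cdot\eta_{{}_SM}(r)\cdot r_2$ only unwinds to $m(r_1rr_2)=((mr_1)r)r_2$ once the actions are placed on the correct sides.
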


Next, we recall the following definition from \cite{AF}.

\begin{definition}\label{4.2}
A finitely generated projective module $P_R$ is called a {\it progenerator} if it is a generator for $\text{Mod-}R$,
i.e., for every module $M_R$, there exists an epimorphism from a direct sum of copies of $P$ onto $M$.
\end{definition}

Progenerator left $R$-modules are defined similarly. It can be shown that a module $P_R$ is a progenerator if and only
if there exist positive integers $m$ and $n$, and modules $P'_R$ and $R'_R$, such that $R^{(m)}\cong P\oplus P'$ and 
$P^{(n)}\cong R\oplus R'$.

We proceed to introduce the notion of projective bimodules of rank one. To this end, we first recall the notion of
faithfully balanced bimodules.

\begin{definition}\label{4.5}
A bimodule $_SM_R$ is called {\it faithfully balanced} if the natural maps $\eta_{M_R}:S\rt \End(M_R)$ and $\eta_{_SM}:R\rt
\End(_SM)$ are isomorphisms.
\end{definition}

\begin{definition}\label{4.3}
A faithfully balanced bimodule $_SP_R$ is called {\it projective of rank one} if both $P_R$ and $_SP$ are finitely generated
projective modules.
\end{definition}

\begin{lemma}\label{4.4}
Let $_SP_R$ be a projective bimodule of rank one. Then both modules $P_R$ and $_SP$ are progenerators.
\end{lemma}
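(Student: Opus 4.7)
The plan is to show that $P_R$ is a generator for $\text{Mod-}R$; since the definition of a rank-one projective bimodule is symmetric in the two sides, the identical argument applied to ${}_SP$ will then show that ${}_SP$ is a generator for $S\text{-Mod}$, and combined with the standing finite-generation and projectivity hypotheses this produces progenerators on both sides. Being a generator is equivalent to the trace ideal $\tau_R(P_R) := \sum_{\phi} \phi(P) \subseteq R$ (the sum ranging over $\phi \in \Hom_R(P_R, R_R)$) coinciding with $R$, so the concrete goal is to exhibit $1_R$ as a finite sum $\sum_j \phi_j(p_j)$ of such evaluations.

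To construct such a decomposition, I will exploit the dual-basis data of ${}_SP$ together with the isomorphism $\eta_{{}_SP}$. Choose a finite dual basis $\{(q_j, \psi_j)\}_{j=1}^n$ for the finitely generated projective left $S$-module ${}_SP$, so that $p = \sum_j \psi_j(p)\, q_j$ for every $p \in P$. Each $\sigma_j \colon p \mapsto \psi_j(p)\, q_j$ is then a left $S$-linear endomorphism of $P$, and $\sum_j \sigma_j = \id_P$. Since $\eta_{{}_SP}$ is bijective, there exist unique elements $r_j \in R$ with $\eta_{{}_SP}(r_j) = \sigma_j$, i.e.\ $p\, r_j = \psi_j(p)\, q_j$ for every $p \in P$; additivity of $\eta_{{}_SP}$ gives $\sum_j r_j = 1_R$.

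For each $j$ and each $p \in P$, the assignment $q \mapsto \psi_j(q)\, p$ is again a left $S$-linear endomorphism of $P$, and therefore corresponds via $\eta_{{}_SP}^{-1}$ to a unique element $\phi_j(p) \in R$ characterised by
$$q\, \phi_j(p) \;=\; \psi_j(q)\, p \qquad (q \in P).$$
Right $R$-linearity of $\phi_j$ then follows from the commuting bimodule actions on $P$: for any $r \in R$, one has $q\bigl(\phi_j(p)\, r\bigr) = \bigl(q\, \phi_j(p)\bigr)\, r = \bigl(\psi_j(q)\, p\bigr)\, r = \psi_j(q)\,(pr)$, which is exactly the defining relation for $\phi_j(pr)$; uniqueness forces $\phi_j(pr) = \phi_j(p)\, r$, so $\phi_j \in \Hom_R(P_R, R_R)$. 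Specialising the definition to $p = q_j$ recovers the endomorphism $\sigma_j$, whence $\phi_j(q_j) = r_j$, and therefore
$$1_R \;=\; \sum_j r_j \;=\; \sum_j \phi_j(q_j) \in \tau_R(P_R).$$

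The only subtle point is the right $R$-linearity of the maps $\phi_j$; once that compatibility is verified as above, the whole argument mirrors in the opposite direction — the dual basis of $P_R$ together with the isomorphism $\eta_{P_R}$ — to produce a corresponding identity $1_S \in \tau_S({}_SP)$ and conclude that ${}_SP$ is likewise a generator.
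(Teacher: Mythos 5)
Your proof is correct, but it takes a genuinely different route from the paper. You use the trace-ideal criterion for generators: starting from a dual basis $\{(q_j,\psi_j)\}$ of the finitely generated projective left $S$-module ${}_SP$, you push the resulting $S$-linear endomorphisms $q\mapsto \psi_j(q)p$ through the inverse of $\eta_{{}_SP}\colon R\to \End({}_SP)$ to manufacture explicit maps $\phi_j\in\Hom_R(P_R,R_R)$ with $\sum_j\phi_j(q_j)=1_R$, which shows $\tau_R(P_R)=R$; the verification that each $\phi_j$ is right $R$-linear (and, implicitly, additive in $p$) by the uniqueness clause of $\eta_{{}_SP}$ is the one delicate point and you handle it correctly. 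The paper instead invokes the functorial criterion of \cite[Proposition 17.5]{AF} --- $P$ is a generator iff $\Hom_S(P,-)$ reflects zero morphisms --- and verifies it via the chain of natural isomorphisms $P\otimes_R\Hom_S(P,X)\cong\Hom_S(\Hom_R(P,P),X)\cong\Hom_S(S,X)\cong X$, the first coming from $P_R$ being finitely generated projective and the second from $\eta_{P_R}$ being an isomorphism. Both arguments ``cross'' the hypotheses (each side's generation property is extracted from the other side's projectivity plus the relevant $\eta$), and both reduce to standard Morita-context facts; yours is more elementary and element-theoretic, exhibiting the surjectivity of the evaluation pairing $\Hom_R(P,R)\times P\to R$ concretely, while the paper's is shorter once one accepts the cited characterization and the tensor-evaluation isomorphism.
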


\begin{proof}
Since the proofs of the claims for $_SP$ and $P_R$ are similar, we prove the claim only for $_SP$. As $_SP$ is a finitely
generated projective module, it suffices to show that $_SP$ is a generator for $S\text{-Mod}$. By \cite[Proposition 17.5]{AF},
equivalently, we need to show that for any homomorphism $f:X \rightarrow Y$ of left $S$-modules, if the induced homomorphism
$$\Hom_S(P,f):\Hom_S(P,X) \rightarrow \Hom_S(P,Y)$$ is zero, then $f=0$.

Let $_SX$ and $_SY$ be two modules, and let $f:X \rightarrow Y$ be an $S$-homomorphism such that $\Hom_S(P,f)=0$. For any
$S$-module $_SZ$, the Hom-evaluation map yields a natural isomorphism $$P \otimes_R \Hom_S(P,Z)\to \Hom_S(\Hom_R(P,P),Z).$$
Consider the following commutative diagram, in which all vertical maps are natural isomorphisms:
\begin{displaymath}
\xymatrix@C=22ex{
P \otimes_R \Hom_S(P,X) \ar@{->}[d] \ar@<0.4ex>[r]^-{P\otimes_R\Hom_S(P,f)} & P \otimes_R \Hom_S(P,Y) \ar@{->}[d] \\
\Hom_S(\Hom_R(P,P),X) \ar@{->}[d] \ar@<0.4ex>[r]^-{\Hom_S(\Hom_R(P,P),f)} & \Hom_S(\Hom_R(P,P),Y) \ar@{->}[d] \\
\Hom_S(S,X) \ar@{->}[d] \ar@<0.4ex>[r]^-{\Hom_S(S,f)} & \Hom_S(S,Y) \ar@{->}[d] \\
X \ar@<0.4ex>[r]^-{f} & Y}
\end{displaymath}
From this diagram, we clearly conclude that $f=0$.
\end{proof}

\begin{lemma}\label{4.6}
Let $R$ and $S$ be two rings and suppose $F:R\text{-Mod}\rightarrow S\text{-Mod}$ and $G:S\text{-Mod}\rightarrow R\text{-Mod}$
are additive covariant functors that establishing an equivalence of categories. Set $P=F(R)$ and $Q=G(S)$. Then $P$ and $Q$
are naturally $(S,R)$-bimodule and $(R,S)$-bimodule; respectively such that
\begin{itemize}
\item[(i)] $_SP_R$ and $_RQ_S$ are faithfully balanced;
\item[(ii)] $_SP$, $P_R$, $_RQ$ and $Q_S$ are all progenerators;
\item[(iii)] $F\cong P\otimes_R-$ and $G\cong Q\otimes_S-$;
\item[(iv)] $F\cong \Hom_R(Q,-)$ and $G\cong \Hom_S(P,-)$.
\end{itemize}
\end{lemma}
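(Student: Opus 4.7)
The plan is to follow the classical Morita theorem, exploiting that an equivalence $(F,G)$ of categories yields adjunctions in both directions and natural isomorphisms $GF\cong \id_{R\text{-Mod}}$ and $FG\cong \id_{S\text{-Mod}}$. First I would equip $P=F(R)$ with a right $R$-module structure and $Q=G(S)$ with a right $S$-module structure. The right multiplication map $\rho_r:R\rightarrow R$, $x\mapsto xr$, is left $R$-linear; functoriality of $F$ on endomorphism rings, combined with Facts \ref{4.1}, promotes each $\rho_r$ to a left $S$-linear endomorphism $F(\rho_r)$ of $P$, thereby defining a right $R$-action on $P$ compatible with its left $S$-action. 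The construction for $Q$ is symmetric.

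Next, for (iv), since $F$ and $G$ are quasi-inverse, each is simultaneously a left and a right adjoint to the other, yielding natural isomorphisms $\Hom_S(F(X),Y)\cong \Hom_R(X,G(Y))$ and $\Hom_R(G(Y),X)\cong \Hom_S(Y,F(X))$. Setting $X=R$ in the first gives $\Hom_S(P,Y)\cong \Hom_R(R,G(Y))\cong G(Y)$, so $G\cong \Hom_S(P,-)$; setting $Y=S$ in the second gives $F\cong \Hom_R(Q,-)$. For (iii), $F$ is a left adjoint and hence preserves colimits; applying it to a free presentation $R^{(J)}\rightarrow R^{(I)}\rightarrow X\rightarrow 0$ and comparing with the presentation produced by $P\otimes_R-$ yields $F\cong P\otimes_R-$ in the style of Watts' theorem, and symmetrically $G\cong Q\otimes_S-$.

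With these functorial identifications in hand, evaluating $FG\cong \id_{S\text{-Mod}}$ at $S$ gives $P\otimes_RQ\cong S$, and $GF\cong \id_{R\text{-Mod}}$ at $R$ gives $Q\otimes_SP\cong R$; tracing the natural structures shows these are isomorphisms of $(S,S)$- and $(R,R)$-bimodules, respectively. Faithful balance (i) then follows: $\End(_SP)\cong \Hom_S(P,F(R))\cong G(F(R))\cong R$ by (iv) and $GF\cong \id$, and one checks this coincides with $\eta_{_SP}$; for $\End(P_R)\cong S$, any right $R$-linear endomorphism $f$ of $P$ induces a right $S$-linear endomorphism $f\otimes \id_Q$ of $P\otimes_RQ\cong S$, which must be left multiplication by a unique element of $S$, furnishing the inverse of $\eta_{P_R}$. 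The argument for $_RQ_S$ is symmetric.

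For (ii), the progenerator property of $_SP$ and $_RQ$ is immediate from the preservation of categorical properties by equivalences, since $_RR$ and $_SS$ are compact projective generators of their respective module categories. For $P_R$ and $Q_S$, I would extract dual bases from the bimodule isomorphisms above: writing $1_S=\sum_i p_i\otimes q_i$ in $P\otimes_RQ$ and $1_R=\sum_j q_j'\otimes p_j'$ in $Q\otimes_SP$ yields finite data that, via the standard dual-basis criterion, exhibits both $P_R$ and $Q_S$ as finitely generated projective generators. The main technical obstacle throughout is the careful bookkeeping of bimodule structures: every abstract isomorphism produced by the equivalence must be verified to coincide with the canonical natural map (for instance $\eta_{M_R}$, $\eta_{_SM}$, or the unit/counit of the tensor-hom adjunction) rather than differ from it by a nontrivial bimodule automorphism.
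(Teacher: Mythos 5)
The paper's own ``proof'' of this lemma is nothing more than the citation \cite[Theorem 22.1]{AF} (the classical Morita theorem), and your sketch is a correct outline of the standard proof of precisely that result: bimodule structures on $F(R)$ and $G(S)$ via functoriality on endomorphisms, the Eilenberg--Watts argument for (iii), the two adjunction isomorphisms specialized at the regular modules for (iv), the unit/counit isomorphisms $P\otimes_RQ\cong S$ and $Q\otimes_SP\cong R$ together with the dual-basis trick for (i) and (ii). The bookkeeping you flag at the end --- verifying that each abstract isomorphism agrees with the canonical map $\eta_{M_R}$ or $\eta_{_SM}$ rather than differing by an automorphism --- is indeed the only delicate point, and it is exactly what the cited reference carries out in detail.
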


\begin{proof}
See \cite[Theorem 22.1]{AF}.
\end{proof}

We next recall the definition of invertible modules; see \cite[Chapter 2, Definition 3.2]{B}.

\begin{definition}\label{4.7}
A bimodule $_SP_R$ is called {\it invertible} if the functor $P\otimes_R-:R\text{-Mod}\rightarrow S\text{-Mod}$ is an
equivalence of categories.
\end{definition}

Indeed, the invertibility of a bimodule ${}_S P_R$ is equivalent to the existence of a bimodule ${}_R Q_S$ such that
$P \otimes_R Q \cong S$ as $(S,S)$-bimodules and $Q \otimes_S P \cong R$ as $(R,R)$-bimodules.

The following result can be easily verified.

\begin{lemma}\label{4.8}
Let $_SP_{S'}$ and $_{S''}Q_S$ be invertible bimodules. Then the tensor product $_{S''}(Q\otimes_SP)_{S'}$ is also an
invertible bimodule.
\end{lemma}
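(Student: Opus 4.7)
The plan is to reduce the statement to the fact that a composition of equivalences of categories is itself an equivalence. By Definition \ref{4.7}, the hypothesis that $_SP_{S'}$ and $_{S''}Q_S$ are invertible means that the two functors $P\otimes_{S'}-: S'\text{-Mod}\lrt S\text{-Mod}$ and $Q\otimes_S-: S\text{-Mod}\lrt S''\text{-Mod}$ are equivalences of categories. Consequently their composite $(Q\otimes_S-)\circ(P\otimes_{S'}-): S'\text{-Mod}\lrt S''\text{-Mod}$ is also an equivalence.

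Next I would invoke the canonical associativity isomorphism $Q\otimes_S(P\otimes_{S'}M)\cong(Q\otimes_S P)\otimes_{S'}M$, which is natural in $M\in S'\text{-Mod}$ and respects the $(S'',S')$-bimodule structure on both sides. This yields a natural isomorphism of functors between the composite above and $(Q\otimes_S P)\otimes_{S'}-$. Since a functor naturally isomorphic to an equivalence is itself an equivalence, $(Q\otimes_S P)\otimes_{S'}-: S'\text{-Mod}\lrt S''\text{-Mod}$ is an equivalence, which is precisely the assertion that $_{S''}(Q\otimes_S P)_{S'}$ is invertible.

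An equivalent, more hands-on route is to construct an inverse bimodule explicitly. Using the characterization of invertibility via inverse bimodules mentioned just after Definition \ref{4.7} (and consistent with Lemma \ref{4.6}), choose $_{S'}P'_S$ and $_SQ'_{S''}$ inverse to $_SP_{S'}$ and $_{S''}Q_S$, respectively. Then repeated use of associativity yields $(Q\otimes_S P)\otimes_{S'}(P'\otimes_S Q')\cong Q\otimes_S (P\otimes_{S'} P')\otimes_S Q'\cong Q\otimes_S S\otimes_S Q'\cong S''$ as $(S'',S'')$-bimodules, and symmetrically $(P'\otimes_S Q')\otimes_{S''}(Q\otimes_S P)\cong S'$ as $(S',S')$-bimodules, so $_{S'}(P'\otimes_S Q')_{S''}$ inverts $_{S''}(Q\otimes_S P)_{S'}$.

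There is no genuine obstacle in either approach; the only point requiring attention — and it is purely bookkeeping — is to keep track of the sides of the bimodule actions and the subscripts on the tensor products, and to check that the associativity isomorphism is compatible with the left $S''$-action inherited from $Q$ and the right $S'$-action inherited from $P$. Either route therefore reduces the lemma to a short, routine verification.
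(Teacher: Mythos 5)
Your proof is correct, and since the paper offers no argument beyond the remark that the lemma ``can be easily verified,'' your first route --- composing the two equivalences $P\otimes_{S'}-$ and $Q\otimes_S-$ and identifying the composite with $(Q\otimes_S P)\otimes_{S'}-$ via the associativity isomorphism --- is exactly the intended verification. The alternative construction of the explicit inverse bimodule $P'\otimes_S Q'$ is also sound and consistent with the characterization of invertibility stated after Definition \ref{4.7}.
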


Although the assertion of the following result may be known to experts, we could not find a proof in the literature;
therefore, we provide one here.

\begin{theorem}\label{4.9}
For a bimodule $_SP_R$, the following statements are equivalent:
\begin{itemize}
\item[(i)] $_SP_R$ is invertible.
\item[(ii)] $_SP_R$ is projective of rank one.
\end{itemize}
\end{theorem}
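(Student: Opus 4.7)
The plan is to deduce both implications from the results established earlier in this section, treating the theorem as a direct consequence of Morita-theoretic packaging, with the real work being a careful tracking of bimodule structures.

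For (i) $\Rightarrow$ (ii), I would set $F = P\otimes_R - : R\text{-Mod}\to S\text{-Mod}$ and choose a quasi-inverse $G$, and then observe that $F(R) = P\otimes_R R \cong P$ as $(S,R)$-bimodules; indeed, the right $R$-action on $F(R)$ arising from viewing $R$ as an $(R,R)$-bimodule (per Lemma \ref{4.6}) transports precisely to the original right $R$-action on $P$. Applying parts (i) and (ii) of Lemma \ref{4.6}, I would conclude that $_SP_R$ is faithfully balanced and that both $_SP$ and $P_R$ are progenerators; in particular they are finitely generated projective, and the canonical maps $\eta_{P_R}$ and $\eta_{{}_SP}$ are isomorphisms. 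This is precisely Definition \ref{4.3}, so $_SP_R$ is projective of rank one.

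For (ii) $\Rightarrow$ (i), Lemma \ref{4.4} first supplies that $_SP$ and $P_R$ are both progenerators. The natural candidate for an inverse bimodule is $Q = \Hom_R(P,R)$, equipped with its canonical $(R,S)$-bimodule structure (left $R$-action on the codomain, right $S$-action by precomposition through $\eta_{P_R}$). Using the dual-basis lemma for the finitely generated projective $P_R$, one verifies that the evaluation maps
$$\mu : Q\otimes_S P \lrt R,\ f\otimes p \mapsto f(p), \qquad \tau : P\otimes_R Q \lrt S,\ p\otimes f\mapsto \bigl(q\mapsto pf(q)\bigr)$$
are isomorphisms of bimodules, the target of $\tau$ being identified with $\End(P_R)$ via $\eta_{P_R}$. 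This exhibits $Q\otimes_S -$ as a quasi-inverse of $P\otimes_R -$, establishing invertibility. Equivalently, since $P_R$ is a progenerator and $\eta_{P_R}$ is an isomorphism, the classical Morita theorem implies directly that $P\otimes_R -$ is an equivalence.

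The main obstacle is bookkeeping: one must consistently track the four module structures on $P$ and $Q$ and confirm that the Morita-theoretic constructions return precisely the prescribed bimodule structures on $P$, not twisted versions. Once this compatibility is installed, each direction reduces to a short invocation of Lemma \ref{4.4}, Lemma \ref{4.6}, and the standard Morita correspondence.
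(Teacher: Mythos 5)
Your proposal is correct and follows essentially the same route as the paper: (i)$\Rightarrow$(ii) by applying Lemma \ref{4.6} to $F=P\otimes_R-$ to get that $_SP_R$ is faithfully balanced with $_SP$ and $P_R$ progenerators, and (ii)$\Rightarrow$(i) by combining Lemma \ref{4.4} with the classical Morita theorem (the paper cites \cite[Theorem 22.2]{AF} exactly where you invoke it). Your explicit construction of $Q=\Hom_R(P,R)$ with the evaluation maps is just an unpacking of that cited theorem rather than a genuinely different argument.
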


\begin{proof}
(i)$\Rightarrow$(ii) Since $_SP_R$ is invertible, the functor $P \otimes_R-:R\text{-Mod} \rightarrow
S\text{-Mod}$ is an equivalence of categories. Applying Lemma \ref{4.6} to the functor $F = P \otimes_R -$ yields that
the bimodule $_SP_R \cong F(R)$ is faithfully balanced, and the modules $_SP$ and $P_R$ are progenerators. These imply
that $_SP_R$ is projective of rank one.

(ii)$\Rightarrow$(i) Since $_SP_R$ is a projective bimodule of rank one, it follows from the definition that $_SP_R$ is
faithfully balanced. Moreover, Lemma \ref{4.4} yields that both modules $P_R$ and $_SP$ are progenerators. Thus, by
\cite[Theorem 22.2]{AF}, we conclude that $P \otimes_R-:R\text{-Mod} \rightarrow S\text{-Mod}$ is an equivalence of
categories, and so $_SP_R$ is invertible.
\end{proof}

We are now in a position to introduce the equivalence relation mentioned in the introduction.

\begin{definition}\label{4.10}
We define an equivalence relation ${\sim}$ on  the class of all right $R$-modules as follows: $_ST_R~{\sim}~ _{S'}T'_R$
if and only if there exist two invertible bimodules $_SP_{S'}$ and $_{S'}Q_S$ such that $T\cong P\otimes_{S'}T'$ and
$T'\cong Q\otimes_ST$, as right $R$-modules.
\end{definition}

Next, we show that the relation already defined is indeed an equivalence relation.

\begin{lemma}\label{4.11}
The relation ${\sim}$ on the class of all right $R$-modules is an equivalence relation.
\end{lemma}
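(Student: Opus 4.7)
The plan is to verify the three axioms (reflexivity, symmetry, transitivity) of an equivalence relation directly from Definition \ref{4.10}, relying on the elementary observation that the regular bimodule $_SS_S$ is invertible and on Lemma \ref{4.8} to build tensor products of invertible bimodules.

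For reflexivity, given $_ST_R$, I would take $P = Q = {}_SS_S$, which is manifestly invertible since the functor $S \otimes_S -$ is naturally isomorphic to the identity on $S\text{-Mod}$ (equivalently, $S \otimes_S S \cong S$ on both sides). The canonical isomorphism $S \otimes_S T \cong T$ of right $R$-modules then yields $T \sim T$.

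For symmetry, suppose $_ST_R \sim {}_{S'}T'_R$ witnessed by invertible bimodules $_SP_{S'}$ and $_{S'}Q_S$ with $T \cong P \otimes_{S'} T'$ and $T' \cong Q \otimes_S T$ as right $R$-modules. Interchanging the roles of $P$ and $Q$ and of $T$ and $T'$ immediately exhibits $T' \sim T$, since the same two isomorphisms serve as the required witnesses in the opposite order.

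For transitivity, assume $_ST_R \sim {}_{S'}T'_R$ via $_SP_{S'}, {}_{S'}Q_S$ and $_{S'}T'_R \sim {}_{S''}T''_R$ via $_{S'}P'_{S''}, {}_{S''}Q'_{S'}$. I would set
\[
\widetilde{P} := P \otimes_{S'} P' \quad \text{and} \quad \widetilde{Q} := Q' \otimes_{S'} Q,
\]
which are $(S,S'')$- and $(S'',S)$-bimodules respectively, and are invertible by Lemma \ref{4.8}. The associativity of the tensor product then gives
\[
\widetilde{P} \otimes_{S''} T'' \cong P \otimes_{S'} (P' \otimes_{S''} T'') \cong P \otimes_{S'} T' \cong T,
\]
and symmetrically $\widetilde{Q} \otimes_S T \cong Q' \otimes_{S'} (Q \otimes_S T) \cong Q' \otimes_{S'} T' \cong T''$ as right $R$-modules, so $T \sim T''$. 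None of the three steps is really an obstacle, as all the substantive content is packaged into Lemma \ref{4.8} and Theorem \ref{4.9}; the only minor care needed is to track the bimodule structures so that the tensor products in the transitivity step are well-defined and that the standard associativity isomorphism is a right $R$-module isomorphism, which is routine.
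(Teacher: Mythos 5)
Your proposal is correct and follows essentially the same route as the paper: the paper dismisses reflexivity and symmetry as clear (which you spell out via the regular bimodule $_SS_S$ and by interchanging the witnesses), and its transitivity argument is exactly yours, forming $P\otimes_{S'}P'$ and $Q'\otimes_{S'}Q$ and invoking Lemma \ref{4.8} together with associativity of the tensor product.
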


\begin{proof}
Clearly, the relation ${\sim}$ is reflexive and symmetric.

To show transitivity, assume that $_{S}T_R ~{\sim} ~_{S'}{T'}_R$ and $_{S'}{T'}_R~ {\sim}~ _{S''}T''_R$. By definition,
there exist invertible bimodules, $_SP_{S'}$, $_{S'}Q_S$, $_{S'}P'_{S''}$ and $_{S''}Q'_{S'}$, such that
\begin{align*}
T &\cong P\otimes_{S'}T', \\
T' &\cong  Q\otimes_ST, \\
T' &\cong P'\otimes_{S''}T'',\ \text{and} \\
T'' &\cong Q'\otimes_{S'}T'.
\end{align*}
Consequently, we have the following isomorphisms of right $R$-modules: $$T\cong P\otimes_{S'}(P'\otimes_{S''}T'')\cong (P\otimes_{S'}P')\otimes_{S''}T''$$ and $$T''\cong Q'\otimes_{S'}(Q\otimes_ST)\cong (Q'\otimes_{S'}Q)\otimes_ST.$$
By Lemma \ref{4.8}, the bimodules $_S(P\otimes_{S'} P')_{S''}$ and $_{S''}(Q'\otimes_{S'} Q)_{S}$  are invertible,
and so $_{S}{T}_R~ {\sim}~ _{S''}T''_R$. Therefore, the relation ${\sim}$ is transitive, and hence an equivalence
relation.
\end{proof}

The equivalence class of a module $T_R$ under $\sim$ is denoted by $[T]$.

\begin{example}\label{4.12}
It is somewhat inconvenient that the definition of the equivalence relation ${\sim}$ involves three rings. We mainly
consider this equivalence relation on the class of Wakamatsu tilting right $R$-modules. Thus, it is natural to ask:
if $T_R$ and $T'_R$ are two Wakamatsu tilting modules, is it true that $\End(T_R) \cong \End(T'_R)$? The answer is no.

In fact, let $k$ be a field, and let $R=kQ$ be the path algebra of the quiver $\mathcal{Q}: 1 \longleftarrow 2
\longleftarrow 3$, and let $T=P(1) \oplus P(3) \oplus S(3)$. It is easy to see that $T_R$ is a Wakamatsu tilting
module. By \cite[Example VI.3.11]{ASS}, $\End(T_R)$ is given by the quiver $1\st{\mu}\longleftarrow 2 \st{\lambda}
\longleftarrow 3$ bound by $\lambda \mu=0$.

On the other hand, if we take $T'_R=R$, then $T'_R$ is also a Wakamatsu tilting module and $\End(T'_R)\cong R$.
Hence, $\End(T_R)$ is not isomorphic to $\End(T'_R)$.
\end{example}

The next result provides an equivalent definition of the equivalence relation defined above.

\begin{lemma}\label{4.13}
For any two bimodules $_ST_R$ and $_{S'}T'_R$, the following are equivalent:
\begin{itemize}
\item[(i)] $T{\sim}T'$.
\item[(ii)] there exist two invertible bimodules $_SP_{S'}$ and $_{S'}Q_S$ such that $T_R\cong \Hom_{S'}(Q,T')$ and
$T'_R\cong \Hom_S(P,T)$, as right $R$-modules.
\end{itemize}
\end{lemma}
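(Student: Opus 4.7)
The plan is to show that (i) and (ii) differ only by replacing each invertible bimodule with its inverse, so that everything reduces to a single natural isomorphism. For an invertible bimodule $_SP_{S'}$, let $_{S'}P^{\vee}_S$ denote a bimodule inverse, so that $P\otimes_{S'}P^{\vee}\cong S$ and $P^{\vee}\otimes_S P\cong S'$ as bimodules. The key assertion I would first establish is the natural isomorphism of functors
\[ \Hom_S(P,-)\;\cong\; P^{\vee}\otimes_S-\;:\; S\text{-Mod}\longrightarrow S'\text{-Mod}. \]

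The justification is immediate from Lemma~\ref{4.6}. Invertibility of $_SP_{S'}$ means that $F=P\otimes_{S'}-$ is an equivalence. Its right adjoint $\Hom_S(P,-)$ is therefore a quasi-inverse, and so is $P^{\vee}\otimes_S-$ by construction. Since any two quasi-inverses of the same equivalence are canonically isomorphic, the two functors agree up to natural isomorphism. When the argument of the functor carries an additional right $R$-module structure, naturality ensures the resulting isomorphism respects that right $R$-action.

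For (i)$\Rightarrow$(ii): given invertible $_SP_{S'}$ and $_{S'}Q_S$ with $T\cong P\otimes_{S'}T'$ and $T'\cong Q\otimes_S T$, choose inverses $_{S'}P^{\vee}_S$ and $_SQ^{\vee}_{S'}$. Applying the natural isomorphism above yields
\[ \Hom_{S'}(P^{\vee},T')\;\cong\; P\otimes_{S'}T'\;\cong\; T \qquad\text{and}\qquad \Hom_S(Q^{\vee},T)\;\cong\; Q\otimes_S T\;\cong\; T', \]
so the pair $(Q^{\vee},P^{\vee})$ takes the role of $(P,Q)$ in (ii). The implication (ii)$\Rightarrow$(i) is entirely symmetric: the same natural isomorphism rewrites the Hom-descriptions as tensor products against the inverse bimodules, producing witnesses for (i).

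The main point requiring care is bookkeeping of bimodule sidedness: one must verify that $P^{\vee}$ indeed carries the bimodule type $_{S'}P^{\vee}_S$ (so that it qualifies for the role of $Q$ in (ii)), analogously for $Q^{\vee}$, and that each natural isomorphism preserves the right $R$-action. None of this is deep, but together these checks constitute essentially the whole content of the argument. The existence of $P^{\vee}$ and the identification of the right adjoint with a quasi-inverse are already packaged into Lemma~\ref{4.6}; once the natural isomorphism $\Hom_S(P,-)\cong P^{\vee}\otimes_S-$ is in hand, the rest is direct substitution.
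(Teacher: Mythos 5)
Your proof is correct and follows essentially the same route as the paper: both arguments reduce the equivalence to the fact (packaged in Lemma~\ref{4.6}) that for an invertible bimodule the tensor functor agrees, up to natural isomorphism, with the Hom functor against its inverse bimodule, so that passing from (i) to (ii) is just swapping each invertible bimodule for its inverse. Your extra derivation of $\Hom_S(P,-)\cong P^{\vee}\otimes_S-$ via uniqueness of quasi-inverses is a harmless rephrasing of what Lemma~\ref{4.6}(iii)--(iv) already states directly.
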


\begin{proof}
Let $A$ and $B$ be two rings, and let $_B X_A$ be an invertible bimodule. By Lemma \ref{4.6}, there exists an invertible
bimodule $_A Y_B$ such that the functor $X \otimes_A - : A\text{-Mod} \longrightarrow B\text{-Mod}$ is naturally equivalent
to the functor $\Hom_A(Y, -) : A\text{-Mod} \longrightarrow B\text{-Mod}$, and similarly, the functor $\Hom_B(X,-):B\text{-Mod}
\longrightarrow A\text{-Mod}$ is naturally equivalent to the functor $Y \otimes_B - : B\text{-Mod} \longrightarrow A\text{-Mod}$.
Hence, statements (i) and (ii) are equivalent.
\end{proof}

We will use the following three lemmas repeatedly in Sections 5 and 6.

\begin{lemma}\label{4.14}
Let $_ST_R$ and $_{S'}T'_R$ be two bimodules. If $_{S}T_R ~{\sim} ~_{S'}{T'}_R$, then
\begin{itemize}
\item[(i)] $\Prod(T_R)=\Prod(T'_R)$.
\item[(ii)] $\Add(T_R)=\Add(T'_R)$.
\item[(iii)] $\add(T_R)=\add(T'_R)$.
\item[(iv)] $~^\perp T=~^\perp T'$ and $T^\perp=T'^\perp$.
\item[(v)] $\Gen^*(T)=\Gen^*(T')$ and $\gen^*(T)=\gen^*(T')$.
\item[(vi)] $\Cogen^*(T)=\Cogen^*(T')$ and $\cogen^*(T)=\cogen^*(T')$.
\end{itemize}
\end{lemma}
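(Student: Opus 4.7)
The plan is to deduce all six statements from the single fact that $T_R$ and $T'_R$ sit in each other's additive closures via a finite direct sum. By Theorem~\ref{4.9}, the invertibility of $_SP_{S'}$ is equivalent to its being projective of rank one, which by Lemma~\ref{4.4} forces $P_{S'}$ to be a progenerator as a right $S'$-module. Hence there exist positive integers $m,n$ and right $S'$-modules $P',R'$ such that
$$
S'^{(m)} \cong P \oplus P' \quad\text{and}\quad P^{(n)} \cong S' \oplus R'
$$
as right $S'$-modules. Applying the functor $-\otimes_{S'} T'$ (which leaves the right $R$-action on $T'$ untouched, so these become right $R$-module isomorphisms) and using the hypothesis $T \cong P\otimes_{S'}T'$, I obtain
$$
T'^{(m)} \cong T \oplus (P'\otimes_{S'}T')\quad\text{and}\quad T^{(n)} \cong T' \oplus (R'\otimes_{S'}T')
$$
as right $R$-modules. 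In particular, $T \in \add(T'_R)$ and $T' \in \add(T_R)$, which proves (iii).

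For (ii) and (i), I just propagate the decompositions. If $X$ is a direct summand of $T^{(I)}$ for some set $I$, then from $T^{(n)} \cong T' \oplus (R'\otimes_{S'}T')$ I get that $X$ is a direct summand of $T'^{(nI)}$, so $\Add(T_R)\subseteq \Add(T'_R)$; the reverse inclusion is symmetric, yielding (ii). For (i), if $Y$ is a direct summand of $T^I$, then from $T'^{(m)} \cong T \oplus (P'\otimes_{S'}T')$ I get that $T^I$ is a direct summand of $(T'^{(m)})^I \cong (T'^I)^m$, and since the latter is a finite direct sum of copies of $T'^I\in\Prod(T'_R)$, it lies in $\Prod(T'_R)$; hence so does $Y$. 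Again by symmetry, (i) follows.

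Statement (iv) is immediate from the decompositions together with the additivity of $\Ext$: since $\Ext^i_R(M,T'^{(m)}) \cong \Ext^i_R(M,T)^m$ splits off $\Ext^i_R(M,T)$ as a summand (and the vanishing of a direct sum is equivalent to the vanishing of all summands), we get $\Ext^{i\ge 1}_R(M,T)=0 \Leftrightarrow \Ext^{i\ge 1}_R(M,T')=0$; the same argument on the other variable gives $^\perp T = {}^\perp T'$. Finally, (v) and (vi) are direct consequences of (ii), (i), and (iv) after unwinding definitions: membership in $\Gen^*(T)$ requires the terms $T_i$ to lie in $\Add(T)$ and the kernels in $T^{\perp_1}$, and by (ii) and (iv) these two conditions are unchanged when $T$ is replaced by $T'$; the arguments for $\gen^*$, $\Cogen^*$, and $\cogen^*$ are identical, using (iii), (i), and (iv).

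The only subtle point is to ensure that the isomorphisms produced in the first paragraph respect the right $R$-module structure, which they do because the splitting of $P$ comes purely from its right $S'$-module structure and the $R$-action lives entirely on the $T'$ factor of $P\otimes_{S'}T'$. After that, the remainder is a straightforward bookkeeping argument, and I expect no serious obstacle.
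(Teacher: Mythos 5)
Your proof is correct and follows essentially the same route as the paper: split $P$ off a free $S'$-module, tensor with $T'$ to realize $T$ and $T'$ as direct summands of finite direct sums of copies of each other, and then propagate these decompositions through $\Prod$, $\Add$, $\add$, $\Ext$, and the definitions of $\Gen^*(-)$, $\Cogen^*(-)$, etc. The only real difference is that you extract both mutual decompositions from the single bimodule $P$ via its progenerator property (Theorem \ref{4.9} together with Lemma \ref{4.4}), whereas the paper uses only the projectivity of $P$ for one inclusion and the second invertible bimodule $Q$ supplied by the hypothesis for the reverse; note also two harmless label slips --- in (ii) you cite $T^{(n)}\cong T'\oplus(R'\otimes_{S'}T')$ where $T'^{(m)}\cong T\oplus(P'\otimes_{S'}T')$ is the decomposition actually needed, and in (iv) the relevant isomorphism is $\Ext^i_R(M,T'^{(m)})\cong\Ext^i_R(M,T')^m$, not $\Ext^i_R(M,T)^m$.
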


\begin{proof}
Assume that $_{S}T_R ~{\sim} ~_{S'}{T'}_R$. Then, by definition, there are invertible bimodules $_SP_{S'}$ and
$_{S'}Q_S$, such that $T\cong P\otimes_{S'}T'$ and $T'\cong Q\otimes_ST $, as right $R$-modules.

Since the proofs of (i), (ii) and (iii) are similar, we only prove (i). By the symmetry, it suffices to show that
$\Prod(T_R)\subseteq \Prod(T'_R)$. Let $X\in \Prod(T_R)$. Then there exist a module $Y_R$ and a set $I$ such that
$X\oplus Y\cong T^I$. So, we have
$$X\oplus Y\cong T^I\cong (P\otimes_{S'}T')^I.~~~~\ (\dagger)$$ On the other hand, since $P$ is a finitely generated
projective right $S'$-module, there exist a module $Z_{S'}$ and a positive integer $\ell$ such that $P\oplus
Z\cong S'^{\ell}$. By $(\dagger)$, we conclude the following isomorphism of right $R$-modules:
\[\begin{array}{lllll}
X\oplus Y \oplus (Z\otimes_{S'}T')^I & \cong  (P\otimes_{S'}T')^I\oplus (Z\otimes_{S'}T')^I\\
& \cong  ((P\otimes_{S'}T')\oplus (Z\otimes_{S'}T'))^I\\
& \cong  ((P\oplus Z)\otimes_{S'}T')^I\\
& \cong  (S'^{\ell}\otimes_{S'}T')^I\\
& \cong  (T'^l)^I. \\
\end{array}\]
Hence, $X\in \Prod(T'_R)$.

(iv) There exist a right $S'$-module $P'$, a right $S$-module $Q'$, and positive integers $\ell$ and $k$ such that
${S'}^{\ell}\cong P \oplus P'$ and $S^{k}\cong Q \oplus Q'$. From these, we deduce the following isomorphism of right
$R$-modules: $$T'^{\ell} \cong T \oplus (P' \otimes_{S'} T')$$ and $$T^{k}\cong T' \oplus (Q' \otimes_S T).$$ Thus,
for any module $X_R$ and every integer $i \geq 0$, $\Ext_R^i(X, T)$ is a direct summand of $\Ext_R^i(X, T')^{\ell}$,
and $\Ext_R^i(X, T')$ is a direct summand of $\Ext_R^i(X, T)^{k}$. Thus, $~^\perp T_R= ~^\perp T'_R$. A similar
argument shows that ${T_R}^\perp={T'_R}^\perp$.

The argument given in the proof of (iv) also shows that $~^{{\perp}_1} T = ~^{{\perp}_1} T'$ and $T^{{\perp}_1} =
T'^{{\perp}_1}$. Thus, (v) and (vi) follow directly from (i), (ii), and (iii).
\end{proof}

We say that the Krull-Schmidt theorem holds for finitely generated right $R$-modules if every nonzero finitely generated
module $M_R$ admits a finite decomposition into indecomposable modules, $M\cong M_1\oplus \cdots \oplus M_n$, such that for 
every indecomposable decomposition $M\cong N_1\oplus \cdots \oplus N_k$, we have $k=n$ and there exists a permutation $\sigma$ 
of $\{1,\ldots,n\}$ such that $N_i\cong M_{\sigma(i)}$ for each $i=1,\ldots,n$. If $R$ is a complete algebra, by 
\cite[Theorem 21.35]{L}, or if $R$ is a right Artinian ring, by \cite[Theorem 12.9]{AF}, the Krull-Schmidt theorem holds
for finitely generated right $R$-modules.

Next, we show that if the Krull–Schmidt theorem holds for finitely generated right $R$-modules, then two basic
Wakamatsu tilting modules $_ST_R$ and $_{S'}T'_R$ are equivalent if and only if they are isomorphic. Recall that
a nonzero module $X_R$ is said to be {\it basic} if it admits a decomposition $X \cong X_1 \oplus \cdots \oplus
X_n$ such that each $X_i$ is indecomposable and $X_i \ncong X_j$ whenever $i \neq j$.

\begin{lemma}\label{4.15}
Assume that the Krull-Schmidt theorem holds for finitely generated right $R$-modules (e.g. $R$ is a complete algebra or
a right Artinian ring). Let $_ST_R$ and $_{S'}T'_R$ be two bimodules that are finitely generated and basic as right
$R$-modules. Then $T{\sim} T'$ if and only if $T\cong T'$, as right $R$-modules.
\end{lemma}

\begin{proof}
Clearly if $T\cong T'$, then $T{\sim} T'$.

Conversely, suppose that $T{\sim} T'$. Then, there exist two invertible bimodules $_SP_{S'}$ and $_{S'}Q_S$ such that
$T\cong P\otimes_{S'}T'$ and $T'\cong Q\otimes_ST$, as right $R$-modules. There exist a module $Z_{S'}$ and a positive
integer $\ell$ such that $S'^{\ell}\cong P\oplus Z$. From this, we conclude that $$T'^{\ell}\cong T\oplus
(Z\otimes_{S'}T'),\ \  (\dag)$$ as right $R$-modules. There exist positive integers $m, n$ and indecomposable right
$R$-modules $T_1, T_2, \dots, T_m$ and $T'_1, T'_2, \dots, T'_n$ such that $$T \cong \bigoplus_{i=1}^m T_i \quad \text{and}
\quad T' \cong \bigoplus_{i=1}^n T'_i,$$ where $T_i\ncong T_j$ and $T'_i \ncong T'_j$ for all distinct integers $i$ and $j$.
Since the Krull–Schmidt theorem holds for finitely generated right $R$-modules, by $(\dag)$, we conclude that $m\leq n$ and
that each $T_i$ is isomorphic to some $T'_j$. Since the relation ${\sim}$ is symmetric, we also obtain $n\leq m$. Therefore,
$m=n$, and hence $T\cong T'$.
\end{proof}

We end this section by establishing a variant of Lemma \ref{4.14} for a finitely generated module $T_R$ and a basic direct
summand $T'_R$ of $T$.

\begin{lemma}\label{4.14a}
Assume that the Krull–Schmidt theorem holds for finitely generated right $R$-modules. Let $T_R$ be a finitely generated
module. Then $T$ has a basic direct summand $T'$, unique up to isomorphism, such that:
\begin{itemize}
\item[(i)] $\Prod(T_R)=\Prod(T'_R)$.
\item[(ii)] $\Add(T_R)=\Add(T'_R)$.
\item[(iii)] $\add(T_R)=\add(T'_R)$.
\item[(iv)] $~^\perp T=~^\perp T'$ and $T^\perp=T'^\perp$.
\item[(v)] $\Gen^*(T)=\Gen^*(T')$ and $\gen^*(T)=\gen^*(T')$.
\item[(vi)] $\Cogen^*(T)=\Cogen^*(T')$ and $\cogen^*(T)=\cogen^*(T')$.
\item[(vii)] $T_R$ is Wakamatsu tilting if and only $T'_R$ is Wakamatsu tilting.
\item[(viii)] $T_R$ is Wakamatsu cotilting if and only $T'_R$ is Wakamatsu cotilting.
\end{itemize}
\end{lemma}

\begin{proof}
Since the Krull–Schmidt theorem holds for finitely generated right $R$-modules, there exist indecomposable right $R$-modules
$X_1, \ldots, X_{\ell}$ and positive integers $n_1,\ldots, n_{\ell}$ such that $$T\cong X_1^{n_1}\oplus \cdots \oplus
X_{\ell}^{n_{\ell}}$$ and $X_i\ncong X_j$ whenever $i \neq j$. Set $T'=X_1 \oplus \cdots \oplus X_{\ell}$. Then clearly
$T'$ is basic, $T' \in \add(T_R)$, and $T \in \add(T'_R)$. From this, it is straightforward to see that (i), (ii), (iii), and
(iv) hold. From $T' \in \add(T_R)$ and $T \in \add(T'_R)$, we also readily deduce that $~^{{\perp}_1} T = ~^{{\perp}_1} T'$
and $T^{{\perp}_1} = T'^{{\perp}_1}$. Thus, (v), (vi), (vii) and (viii) follow immediately from (i), (ii), and (iii).
\end{proof}

\section{Proof of Theorems 1.1 and 1.2}

In this section, our objective is to generalize the first Mantese–Reiten theorem \cite[Theorem 2.10]{MR} to associative
rings; see Theorems \ref{5.1} and \ref{5.8}. To prove Theorem \ref{5.1}, we require the following three lemmas. The first
and third lemmas are used, respectively, to verify that the map in Theorem \ref{5.1} is well defined and injective.

\begin{lemma}\label{5.2}
Let $_ST_R$ and $_{S'}T'_R$ be two Wakamatsu tilting bimodules. If $_ST_R~{\sim}~ _{S'}T'_R$, then $\overline{\CX_{T}}=
\overline{\CX_{T'}}$ and $\CX_{T}=\CX_{T'}$.
\end{lemma}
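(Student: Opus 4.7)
The plan is to observe that this is essentially a direct corollary of Lemma \ref{4.14}, which packages precisely the invariance properties of $\sim$ that we need. Recall the definitions
$$\overline{\CX_{T}} = T^{\perp} \cap \Gen^*(T), \qquad \CX_{T} = T^{\perp} \cap \gen^*(T),$$
so the claim decomposes into a statement about orthogonal classes and a statement about $\Gen^*/\gen^*$ classes.

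First I would invoke Lemma \ref{4.14}(iv) to conclude that $T^{\perp} = T'^{\perp}$ under the hypothesis $_ST_R \sim {}_{S'}T'_R$. Then I would apply Lemma \ref{4.14}(v) to obtain $\Gen^*(T) = \Gen^*(T')$ and $\gen^*(T) = \gen^*(T')$. Intersecting these equalities pairwise gives
$$\overline{\CX_{T}} = T^{\perp} \cap \Gen^*(T) = T'^{\perp} \cap \Gen^*(T') = \overline{\CX_{T'}},$$
and similarly $\CX_{T} = \CX_{T'}$.

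Since both assertions reduce immediately to Lemma \ref{4.14}, there is no genuine obstacle: all of the work has already been carried out in establishing that equivalent bimodules determine the same $\Add$, $\add$, $\Prod$, orthogonal, and $\Gen^*/\gen^*$ classes. The proof is thus a one-line verification, and its role in the paper is simply to confirm that the map $[T] \mapsto \overline{\CX_{T}}$ in Theorem \ref{5.1} is well-defined on equivalence classes.
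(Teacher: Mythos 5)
Your proof is correct and matches the paper's, which likewise cites Lemma \ref{4.14} (specifically parts (iv) and (v)) and notes that the claim follows immediately by intersecting $T^{\perp}=T'^{\perp}$ with $\Gen^*(T)=\Gen^*(T')$ and $\gen^*(T)=\gen^*(T')$. You have simply written out explicitly the one-line verification the paper leaves implicit.
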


\begin{proof}
The claim follows immediately from Lemma \ref{4.14}.
\end{proof}

\begin{lemma}\label{5.2+}
Let $T_R$ be a module with $S=\End(T_R)$, and let $Y_R$ be a module. Then
\begin{itemize}
\item[(i)] For any $X_R\in \cogen^*(T)$, the natural homomorphism $$\varphi_X:\Hom_R(Y,X)\rt \Hom_S(\Hom_R(X,T),\Hom_R(Y,T)),$$
defined by $(\varphi_X(f))(g)=gf$, is an isomorphism.
\item[(ii)] For any $X_R \in \gen^*(T)$, the natural homomorphism $$\psi_X: \Hom_R(X,Y)\rt \Hom_S(\Hom_R(T,X),\Hom_R(T,Y)),$$
defined by $(\psi_X(f))(g)=fg$, is an isomorphism.
\end{itemize}
\end{lemma}

\begin{proof}
(i) If $X = T^n$, then it is straightforward to see that the natural map $\varphi_X$ is the composition of the
following natural group isomorphisms:
\[\begin{array}{lllll}
\Hom_R(Y,T^n)&\cong \Hom_R(Y,T)^n\\
&\cong \Hom_S(S^n,\Hom_R(Y,T))\\
&\cong \Hom_S(\Hom_R(T^n,T),\Hom_R(Y,T))\\
&\cong \Hom_S(\Hom_R(X,T),\Hom_R(Y,T)).
\end{array}\]
According to the above isomorphisms, we can conclude that $\varphi_X$ is an isomorphism for any $X\in \add(T_R)$.

Let $X \in \cogen^*(T_R)$. Then, there exists an exact sequence $$0 \rt X \overset{f}\rt T_0 \overset{g} \rt T_1$$
with $T_0, T_1 \in \add(T_R)$, $\Ext_R^1(\cok \ f, T) = 0$ and $\Ext_R^1(\cok \ g, T) = 0$. This induces the
exact sequence $$\Hom_R(T_1, T)\rt \Hom_R(T_0, T) \rt \Hom_R(X, T) \rt 0.$$ For any module $Z_R$, set $Z^T=
\Hom_R(Z, T)$. Applying the left exact functor $\Hom_S(-, Y^T)$ to the above sequence yields the bottom row in
the following commutative diagram with exact rows:
\vspace{0.2cm}
$$\begin{CD}
0 @>>> \Hom_R(Y,X) @>>> \Hom_R(Y,T_0)@>>> \Hom_R(Y,T_1) \\
& & @V\varphi_X VV @V\varphi_{T_0} VV @V\varphi_{T_1} VV & & & &\\ 0 @>>> \Hom_S(X^T,Y^T) @>>>
\Hom_S({T_0}^T,Y^T) @>>> \Hom_S({T_1}^T,Y^T)
\end{CD}$$
\vspace{0.5cm}
As $\varphi_{T_0}$ and $\varphi_{T_1}$ are isomorphisms, so  $\varphi_X$ is also an isomorphism.

(ii) The proof is the dual of the proof of (i).
\end{proof}

\begin{lemma}\label{5.3}
Let $_ST_R$ and $_{S'}T'_R$ be two Wakamatsu tilting bimodules. If either $\overline{\CX_{T}}=\overline{\CX_{T'}}$ or
$\CX_{T}=\CX_{T'}$, then $_ST_R~{\sim}~ _{S'}T'_R$.
\end{lemma}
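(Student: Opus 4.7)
The plan is to exhibit explicit invertible bimodules that witness $T\sim T'$ in the sense of Definition \ref{4.10}. Set
\[
P := \Hom_R(T',T) \quad \text{and} \quad Q := \Hom_R(T,T'),
\]
which by Facts \ref{4.1} are naturally an $(S,S')$- and an $(S',S)$-bimodule, respectively. The aim is to verify that $P$ and $Q$ are projective bimodules of rank one---hence invertible by Theorem \ref{4.9}---and that the natural evaluation maps provide the required isomorphisms $T\cong P\otimes_{S'}T'$ and $T'\cong Q\otimes_S T$ of right $R$-modules. The key preliminary observation is that any Wakamatsu tilting module is finitely generated, since $T\in\gen^*(R)$ yields a surjection $T_0\twoheadrightarrow T$ with $T_0\in\add(R)$; thus $T,T'\in\text{mod-}R$.

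I would first prove that $T'\in\add(T)$ (and symmetrically $T\in\add(T')$). In the case $\overline{\CX_T}=\overline{\CX_{T'}}$, both $T$ and $T'$ lie in this common subcategory, and by Lemma \ref{2.6}(iv) each is an Ext-projective $\overline{generator}$ for it. Applying the generator property of $T$ to $T'$ produces a short exact sequence
\[
0\to K\to T_0\to T'\to 0
\]
with $T_0\in\Add(T)$ and $K\in\overline{\CX_T}=\overline{\CX_{T'}}$. Since $T'$ is Ext-projective in $\overline{\CX_{T'}}$, we have $\Ext^1_R(T',K)=0$, so the sequence splits and $T'\in\Add(T)$. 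Because $T'$ is finitely generated, the split embedding $T'\hookrightarrow T^{(I)}$ factors through some finite partial sum $T^n\subseteq T^{(I)}$ on which the complementary projection still splits, whence $T'\in\add(T)$. The case $\CX_T=\CX_{T'}$ is handled identically using the $\add$-version of the same argument, which follows immediately from the definition of $\gen^*(T)$ together with a short long-exact-sequence computation showing that the kernel remains in $T^{\perp}$.

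Next I would invoke the projectivization theorem (Lemma \ref{2.11}). Applying part (i) to $T_R$, the functor $\Hom_R(T,-)$ restricts to an equivalence $\add(T_R)\simeq\text{proj-}S$ sending $T\mapsto S$ and $T'\mapsto Q$; this forces $Q_S$ to be finitely generated projective and, via the induced ring isomorphism on endomorphisms, identifies the natural map $\eta_{Q_S}\colon S'=\End_R(T')\to\End(Q_S)$ as an isomorphism. Dually, part (ii) applied to $T'_R$ yields that $_{S'}Q$ is finitely generated projective and that $\eta_{_{S'}Q}\colon S\to\End({}_{S'}Q)$ is an isomorphism. Hence $Q$ satisfies Definition \ref{4.3}, so it is invertible by Theorem \ref{4.9}; the argument for $P$ is entirely symmetric, swapping the roles of $T$ and $T'$. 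Finally, since $T\in\overline{\CX_{T'}}$, Lemma \ref{2.6}(i) asserts that the evaluation $\nu_T\colon P\otimes_{S'}T'\to T$ is an isomorphism, and symmetrically $\nu_{T'}\colon Q\otimes_S T\to T'$ is an isomorphism. By Definition \ref{4.10}, these isomorphisms witness $T\sim T'$.

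The main obstacle is the first step: one must ensure that the $\Add$-based Ext-projective $\overline{generator}$ property descends to the stronger conclusion $T'\in\add(T)$. This is possible only because Wakamatsu tilting modules are automatically finitely generated, which is what allows the support argument reducing $\Add(T)$ to $\add(T)$. Once this is secured, the remainder is a routine package of projectivization theory together with the evaluation identities supplied by Lemma \ref{2.6}(i).
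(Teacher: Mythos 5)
Your proof is correct and follows essentially the same route as the paper's: both set $P=\Hom_R(T',T)$ and $Q=\Hom_R(T,T')$, obtain $T'\in\add(T_R)$ and $T\in\add(T'_R)$ by splitting the Ext-projective generator sequence, deduce two-sided finitely generated projectivity from the projectivization Lemma \ref{2.11}, and extract the isomorphisms $T\cong P\otimes_{S'}T'$ and $T'\cong Q\otimes_ST$ from Lemma \ref{2.6}(i). The only deviations are organizational: the paper first collapses the $\overline{\CX_{T}}=\overline{\CX_{T'}}$ case to $\CX_{T}=\CX_{T'}$ via Lemma \ref{2.6}(v), so it never needs your (correct) finite-generation descent from $\Add(T)$ to $\add(T)$, and it checks that the natural maps $\eta_{P_{S'}}$ and $\eta_{_SP}$ are isomorphisms by the explicit natural isomorphisms of Lemma \ref{5.2+} rather than by appealing to full faithfulness of the (co)equivalences in Lemma \ref{2.11}.
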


\begin{proof}
Lemma \ref{2.6}(v) yields that $\overline{\CX_{L}}\medcap \gen^*(R) = \CX_{L}$ for any Wakamatsu tilting module $L_R$.
Hence, it suffices to show that the equality $\CX_{T} = \CX_{T'}$ implies $_S T_R \sim {}_{S'}T'_R$.

Assume that $\CX_{T} = \CX_{T'}$. In particular, $T \in \overline{\CX_{T'}}$ and $T' \in \overline{\CX_{T}}$, which, by
Lemma \ref{2.6}(i), yield the isomorphisms $$T\cong \Hom_R(T',T) \otimes_{S'} T'$$ and $$T'\cong \Hom_R(T,T')\otimes_S T,$$
as right $R$-modules. The proof is therefore complete once we verify that $_S P_{S'} = \Hom_R(T', T)$ and $_{S'} Q_S=
\Hom_R(T, T')$ are invertible bimodules. Equivalently, by Theorem \ref{4.9}, it is enough to prove that the bimodules
$_SP_{S'}$ and $_{S'}Q_S$ are projective of rank one. We provide only the argument for ${}_S P_{S'}$, since the other
case is similar.
	
Since $T'\in \CX_{T}$, by Lemma \ref{2.6a}(iii) there exists an exact sequence $$0 \rt Y \rt T_0 \rt T' \rt 0$$ with
$T_0 \in \add(T_R)$ and $Y \in \CX_{T}$. As $\CX_{T}=\CX_{T'}\subseteq T'^{\perp}$, this sequence splits, and hence
$T'\in \add(T_R)$. By Lemma \ref{2.11}(ii), it follows that $\Hom_R(T',T)$ is a finitely generated projective left
$S$-module. Similarly, we have $T \in \add(T'_R)$, and therefore $\Hom_R(T',T)$ is a finitely generated projective
right $S'$-module by Lemma \ref{2.11}(i).

As $T \in \add(T'_R)$, it follows that $T_R \in \gen^*(T')$. Hence, by Lemma \ref{5.2+}(ii), there is a natural isomorphism
$$\Hom_R(T,T)\cong \Hom_{S'}(\Hom_R(T',T),\Hom_R(T',T)).$$ A straightforward verification shows that this coincides with
the natural map $\eta_{P_{S'}}:S\rt \End(P_{S'})$. Similarly, applying Lemma \ref{5.2+}(i), we see that the natural map 
$\eta_{_SP}:S'\rt \End(_SP)$ is an isomorphism as well.
\end{proof}

\begin{theorem}\label{5.1}
The map $[T]\mapsto \overline{\mathcal{X}_{T}}$ establishes a one-to-one correspondence between the equivalence classes
of Wakamatsu tilting right $R$-modules and preenveloping $\overline{coresolving}$ subcategories of $\text{Mod-}R$ with
an Ext-projective $\overline{generator}$ in $\text{gen}^*(R)$, maximal among those with the same Ext-projective
$\overline{generator}$.
\end{theorem}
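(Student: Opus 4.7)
The plan has three parts: well-definedness on equivalence classes, injectivity of the map $[T]\mapsto\overline{\CX_T}$, and surjectivity onto the target family.

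\textbf{Well-definedness.} For Wakamatsu tilting $T_R$, Lemma \ref{2.6} already packages $\overline{\CX_T}=T^\perp\cap\Gen^*(T)$ as a preenveloping $\overline{coresolving}$ subcategory whose Ext-projective $\overline{generator}$ is $T\in\gen^*(R)$, and part (iv) of that lemma is precisely the required maximality. If $T\sim T'$, Lemma \ref{4.14}(iv)(v) gives $T^\perp=T'^\perp$ and $\Gen^*(T)=\Gen^*(T')$, so $\overline{\CX_T}=\overline{\CX_{T'}}$; hence the assignment descends to equivalence classes.

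\textbf{Injectivity.} Suppose $\overline{\CX_T}=\overline{\CX_{T'}}$ and put $S=\End(T_R)$, $S'=\End(T'_R)$. Each of $T,T'$ lies in the common subcategory, is Ext-projective there, and has the other as Ext-projective $\overline{generator}$; the generator presentation $0\to K\to T^{(I)}\to T'\to 0$ therefore splits (because $\Ext^1_R(T',K)=0$), yielding $T'\in\Add(T)$, and symmetrically $T\in\Add(T')$. Since any $M\in\gen^*(R)$ is a quotient of an object of $\add(R)$, Wakamatsu tilting modules are finitely generated, and a standard finite-generation reduction promotes the above to $T'\in\add(T)$ and $T\in\add(T')$. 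Setting $P=\Hom_R(T',T)$ and $Q=\Hom_R(T,T')$, Lemma \ref{2.11} (in its left/right/dual variants) shows that $P$ and $Q$ are finitely generated projective as bimodules on both sides. Applying Lemma \ref{2.6}(i) at $M=T'$ inside $\overline{\CX_T}$ and at $M=T$ inside $\overline{\CX_{T'}}$ produces $Q\otimes_S T\cong T'$ and $P\otimes_{S'} T'\cong T$ as right $R$-modules. Composition of homomorphisms then supplies bimodule maps $P\otimes_{S'} Q\to S$ and $Q\otimes_S P\to S'$, which are isomorphisms (passage to the $\add$-level reduces the check to a matrix computation in the spirit of classical Morita theory). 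Hence $P$ and $Q$ are invertible by Theorem \ref{4.9}, and $T\sim T'$ by Definition \ref{4.10}.

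\textbf{Surjectivity.} Let $\CC$ satisfy the hypotheses with Ext-projective $\overline{generator}$ $T\in\gen^*(R)$. From $T\in\CC\cap{}^\perp\CC$ we read off $\Ext^{\geqslant 1}_R(T,T)=0$, and $T\in\gen^*(R)$ is given, so the only remaining item in Definition \ref{2.1} is $R\in\cogen^*(T)$. To obtain it, embed $R$ into an injective cogenerator $E\in\CC$; the factorization of $R\hookrightarrow E$ through a $\CC$-preenvelope $\phi_0:R\to C^0$ forces $\phi_0$ to be injective; a lift of $\phi_0$ along the generator presentation $0\to L^0\to T^{(I_0)}\to C^0\to 0$ exists by projectivity of $R_R$ and is injective as well, and by finite generation of $R$ its image lies in a direct summand $T^{n_0}\in\add(T)$ of $T^{(I_0)}$. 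A snake-lemma computation identifies $T^{(I_0)}/R$ with an extension of $C^0$ by $L^0$ (both in $\CC$), hence in $\CC$; splitting off the complement of $T^{n_0}$ in $T^{(I_0)}$ then yields $T^{n_0}/R\in\CC$, so $\Ext^1_R(T,T^{n_0}/R)=0$ because $T\in{}^\perp\CC$. Iterating the same construction on $T^{n_0}/R$ builds the required coresolution, so $T$ is Wakamatsu tilting. Lemma \ref{2.6}(iv) then gives $\CC\subseteq\overline{\CX_T}$, and the maximality hypothesis upgrades this to $\CC=\overline{\CX_T}$.

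\textbf{Main obstacle.} The genuinely delicate step is the iteration inside surjectivity: at stage $j$, one must embed $M^{j-1}\in\CC$ into some $T^{n_j}\in\add(T)$ with cokernel again in $\CC$, even though $M^{j-1}$ need not be projective and the naive lifting along a generator presentation fails. The strategy will be to take a $\CC$-preenvelope of $M^{j-1}$ (injective because $\CC$ contains injective hulls), combine it with the generator presentation of that preenvelope, and then exploit finite generation of $M^{j-1}$ together with the closure properties of $\CC$ and the completeness of the cotorsion pair associated to $T$ from Lemma \ref{2.5} to descend to $\add(T)$ while keeping the intermediate cokernels inside $\CC$.
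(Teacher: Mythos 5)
Your well-definedness step and the overall architecture (well-defined, injective, surjective) match the paper, and your injectivity argument is essentially the paper's Lemma~\ref{5.3} with one divergence at the end: you certify invertibility of $P=\Hom_R(T',T)$ and $Q=\Hom_R(T,T')$ by producing evaluation maps $P\otimes_{S'}Q\to S$ and $Q\otimes_SP\to S'$ and asserting they are isomorphisms by a ``matrix computation,'' whereas the paper instead verifies that $P$ is projective of rank one (finitely generated projective on both sides via Lemma~\ref{2.11}, faithfully balanced via Lemma~\ref{5.2+}) and then applies Theorem~\ref{4.9}. Your route is classical and can be made to work, but the isomorphism assertion is exactly the content to be proved, and Theorem~\ref{4.9} is not the right citation for it (the remark following Definition~\ref{4.7} is). This part is repairable detail, not a gap.

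The surjectivity argument is where there is a genuine gap --- and it is precisely the step the paper does not prove itself but imports from \cite[Proposition 3.6]{S}. Two problems. First, $T^{(I_0)}/R$ is not an extension of $C^0$ by $L^0$: the snake lemma gives $0\to L^0\to T^{(I_0)}/R\to \cok\,\phi_0\to 0$, and you have no argument that $\cok\,\phi_0=C^0/\phi_0(R)$ lies in $\CC$ (a $\CC$-preenvelope, even a special one, only controls its cokernel up to ${}^{\perp_1}\CC$), so $T^{(I_0)}/R\in\CC$ and hence $T^{n_0}/R\in\CC$ are unjustified. Second, and more fundamentally, the definition of $\cogen^*(T)$ demands $\Ext^1_R(\cok\, f^i,T)=0$, i.e.\ Ext \emph{from} the cokernels \emph{to} $T$; placing the cokernels in $\CC$ and using $T\in{}^{\perp}\CC$ only yields $\Ext^1_R(T,\cok\, f^i)=0$, which is the opposite variance. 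Since $T$ is Ext-\emph{projective} in $\CC$, not Ext-injective, membership of the cokernels in $\CC$ contributes nothing toward $R\in\cogen^*(T)$. The correct strategy, visible in the paper's Lemma~\ref{5.7} (the analogous statement for right Morita rings), is to keep the cokernels in ${}^{\perp}\CC$ rather than in $\CC$: one shows that every $X\in{}^{\perp}\CC$ admits an exact sequence $0\to X\to Y\to Z\to 0$ with $Y\in\add(T_R)$ and $Z\in{}^{\perp}\CC$, starts the iteration at $X=R\in{}^{\perp}\CC$, and then $\Ext^1_R(\cok\, f^i,T)=0$ is automatic because $T\in\CC$. Your ``main obstacle'' paragraph shows you sensed the difficulty, but the proposed fix still aims to land the cokernels in $\CC$, so it resolves neither issue.
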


\begin{proof}
This map is well defined according to Lemma \ref{2.6} and Lemma \ref{5.2}. It is injective by Lemma \ref{5.3}, and it is
surjective by \cite[Proposition 3.6]{S}.
\end{proof}

We record the following corollary, which also extends \cite[Theorem 2.10]{MR}.

\begin{corollary}\label{5.4}
Assume that the Krull-Schmidt theorem holds for finitely generated right $R$-modules. Then the map $\phi:T \mapsto
\overline{\CX_{T}}$ establishes a one-to-one correspondence between the isomorphism classes of basic Wakamatsu tilting
right $R$-modules and the preenveloping $\overline{coresolving}$ subcategories of $\text{Mod-}R$ that possess an
Ext-projective $\overline{generator}$ in $\gen^*(R)$ and are maximal among those having the same Ext-projective
$\overline{generator}$.
\end{corollary}

\begin{proof}
By Theorem \ref{5.1} and Lemma \ref{4.15}, this map is well defined and injective. Let $\mathcal{C}$ be a preenveloping
$\overline{coresolving}$ subcategory of $\text{Mod-}R$ that possesses an Ext-projective $\overline{generator}$ in
$\gen^*(R)$ and is maximal among all subcategories of $\text{Mod-}R$ having the same Ext-projective $\overline{generator}$.
By Theorem \ref{5.1}, there exists a Wakamatsu tilting module $T_R$ such that $\mathcal{C}=\overline{\mathcal{X}_{T}}$.
By Lemma \ref{4.14a}, there exists a basic Wakamatsu tilting module $T'_R$ such that $\overline{\mathcal{X}_{T'}}=
\overline{\mathcal{X}_{T}}$, and thus $\phi$ is surjective.
\end{proof}

We conclude this section by proving another generalization of \cite[Theorem 2.10]{MR}, which serves as an analogue of
Theorem \ref{5.1} for the category $\text{mod-}R$ when $R$ is a right Morita ring. To this end, we begin by recalling
the definition of a right Morita ring, followed by a lemma that will be essential in the proof.

\begin{definition}\label{5.6}
A ring $R$ is called {\it right Morita} if there exist a ring $S$ and a bimodule $_S U_R$ such that the functors
$\Hom_R(-,U)\colon \text{mod-}R \to S\text{-mod}$ and $\Hom_S(-,U)\colon S\text{-mod}\to \text{mod-}R$ induce a
duality of categories.
\end{definition}

It is easy to see that every Artin algebra is a right Morita ring. Conversely, right Morita rings need not be
Artin algebras; see \cite[Example 3.6]{An1}. Moreover, \cite[Corollary 24.9]{AF} shows that any right Morita
ring is right Artinian, and hence also right Noetherian.

Let $R$ be a right Morita ring with a bimodule $_S U_R$ as in the above definition. By \cite[Corollary 24.9]{AF},
$U_R$ is a finitely generated injective cogenerator. Let $M_R$ be a finitely generated module. Then there exist
a positive integer $n$ and a surjective homomorphism $\phi: S^n\to \Hom_R(M,U)$ of finitely generated left
$S$-modules. Applying the functor $\Hom_S(-,U)$ to $\phi$ induces an injective homomorphism $\psi: M \to U^n$ of
finitely generated right $R$-modules. Hence, every finitely generated right $R$-module embeds into a finitely
generated injective right $R$-module.

\begin{lemma}\label{5.7}
Let $R$ be a right Morita ring and $\CX$ a coresolving subcategory of $\text{mod-}R$. If $\CX$ admits an
Ext-projective generator $T_R$, then $T_R$ is a Wakamatsu tilting module.
\end{lemma}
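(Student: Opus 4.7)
The plan is to verify the three defining conditions of a Wakamatsu tilting module in turn: that $T_R$ is self-orthogonal, that $T_R\in\gen^*(R)$, and that $R_R\in\cogen^*(T)$. The first two are direct consequences of the hypotheses; the substance of the argument lies in the third.

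Self-orthogonality is immediate: because $T$ is an Ext-projective generator of $\mathcal{X}$, it lies in both $\mathcal{X}$ and ${}^{\perp}\mathcal{X}$, hence $\Ext^{i\geq 1}_R(T,T)=0$. For $T\in\gen^*(R)$, I would use the remarks following Definition~\ref{5.6}: any right Morita ring is right Artinian, hence right Noetherian, so the finitely generated module $T$ admits a resolution by finitely generated projective $R$-modules, and the conditions $\Ext^1_R(R,\Ker f_i)=0$ are automatic because $R_R$ is projective.

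The main step is to show $R_R\in\cogen^*(T)$. I intend to use the Morita duality $D=\Hom_R(-,U)\colon \text{mod-}R\to S\text{-mod}$ associated with the bimodule $_SU_R$, combined with an iterative application of the Ext-projective generator property. Since $U$ is finitely generated and injective, $U\in\mathcal{X}$; iterating the Ext-projective generator property on $U$ and its successive syzygies in $\mathcal{X}$ produces an exact sequence $\cdots\to T_1\to T_0\to U\to 0$ with each $T_i\in\add(T)$ and all syzygies $K_i$ in $\mathcal{X}$, so $\Ext^1_R(T,K_i)=0$; that is, $U\in\gen^*(T)$. Because $U$ is injective, $D$ is exact, and applying it converts the above into an exact sequence $0\to S\to D(T_0)\to D(T_1)\to\cdots$ in $S\text{-mod}$ with each $D(T_i)\in\add(D(T))$. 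The Morita-duality isomorphism $\Ext^{*}_S(D(M),D(N))\cong \Ext^{*}_R(N,M)$ then turns $\Ext^1_R(T,K_i)=0$ into $\Ext^1_S(D(K_i),D(T))=0$, giving $_SS\in\cogen^*(D(T))$ in $S\text{-mod}$.

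The final step is to transfer $_SS\in\cogen^*(D(T))$ back to $R_R\in\cogen^*(T)$. The left $S$-module $_SD(T)$ has endomorphism ring $\End(T_R)^{\op}=\Lambda^{\op}$ by Morita duality, so applying Lemma~\ref{3.4} to $_SD(T)$ should translate $_SS\in\cogen^*(D(T))$ into the Wakamatsu-type data for $D(T)$ viewed over $\Lambda^{\op}$. Matching these with the bimodule identifications supplied by Morita duality, I expect to recover the three hypotheses of Lemma~\ref{3.3} for the bimodule $_\Lambda T_R$---namely the self-orthogonality of $_\Lambda T$, the isomorphism $R\cong\End(_\Lambda T)$, and $_\Lambda T\in\gen^*(\Lambda)$---from which Lemma~\ref{3.3} yields $R_R\in\cogen^*(T)$. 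The main obstacle I anticipate is precisely this last transfer: carefully tracking the bimodule structures under Morita duality so that Lemma~\ref{3.4} on the $S$-side delivers on the nose what Lemma~\ref{3.3} requires on the $R$-side. All earlier steps are straightforward iterative applications of the Ext-projective generator property together with the standard functorial behaviour of $D$.
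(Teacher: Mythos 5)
Your preliminary steps are fine: self-orthogonality of $T_R$, the fact that $T_R\in\gen^*(R)$ because $R$ is right Artinian, and the construction of an exact sequence $\cdots\to T_1\to T_0\to U\to 0$ with $T_i\in\add(T_R)$ and all kernels killed by $\Ext^1_R(T,-)$, i.e.\ $U\in\gen^*(T)$, are all correct. The gap is precisely the final transfer you flag. Applying the Morita duality and then Lemma \ref{3.4} to the left $S$-module $D(T)$ yields conditions on the module $D(T)=\Hom_R(T,U)$ over its endomorphism ring $\End({}_SD(T))$ (which is identified with $\End(T_R)$ up to taking opposites): self-orthogonality of $D(T)$ over that ring, an isomorphism of $S$ with the double endomorphism ring of $D(T)$, and $D(T)\in\gen^*$ over that ring. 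What Lemma \ref{3.3} requires in order to output $R_R\in\cogen^*(T)$ are the analogous conditions on the module $T$ itself over $\End(T_R)$. These do not match: $D(T)$ is not isomorphic to $T$ as a module over $\End(T_R)$, and the Morita duality $\Hom_R(-,U)$ is a duality only between $\text{mod-}R$ and $S\text{-mod}$ --- it induces no duality on $\End(T_R)$-modules that could carry the one set of conditions to the other. (Already the second conditions are incompatible on their face: one asserts an isomorphism with $S$, the other with $R$.) Worse, your intermediate statement ${}_SS\in\cogen^*({}_SD(T))$ is, via the inverse duality $\Hom_S(-,U)$, literally equivalent to the statement $U\in\gen^*(T_R)$ you started from, so no progress toward $R_R\in\cogen^*(T)$ has actually been made. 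Note that ``the injective cogenerator lies in $\gen^*(T)$'' is essentially the Wakamatsu \emph{cotilting} condition, and the point of Section~3 is that this does not imply the tilting condition $R_R\in\cogen^*(T)$ without extra hypotheses (cf.\ Lemma \ref{3.2} and Theorem \ref{3.6}); a right Morita ring need not satisfy them automatically.

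The paper closes this gap by a direct construction rather than by dualizing. Since $R$ is right Artinian, $\add(T_R)$ has only finitely many indecomposables, so every finitely generated module admits an $\add(T_R)$-preenvelope. For $X\in{}^{\perp}\CX\cap\text{mod-}R$ one embeds $X$ into a finitely generated injective $I$ (this is where the Morita hypothesis is used), lifts that embedding through an exact sequence $0\to K\to T_0\to I\to 0$ supplied by the Ext-projective generator property (possible because $\Ext^1_R(X,K)=0$), concludes that the $\add(T_R)$-preenvelope $X\to Y$ is monic, and then verifies that its cokernel again lies in ${}^{\perp}\CX$. Iterating from $X=R$ produces the required coresolution witnessing $R_R\in\cogen^*(T)$. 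Some argument of this kind seems unavoidable; the duality bookkeeping in your last step cannot substitute for it.
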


\begin{proof}
Since $T_R$ is an Ext-projective generator for $\CX$, it is self-orthogonal. On the other hand, because $R$ is
right Noetherian and $T_R$ is finitely generated, we have $T_R \in \gen^*(R)$. Hence, it only remains to show
that $R_R\in \cogen^*(T)$.

We claim that for any $X \in {}^{\perp}\CX \medcap \text{mod-}R$, there exists an exact sequence $$0 \to X \to Y
\to Z \to 0,$$ with $Y \in \add(T_R)$ and $Z \in {}^{\perp}\CX \medcap \text{mod-}R$. Applying this claim to $X=R$
shows that $R_R \in \cogen^*(T)$.

Because $R$ is right Artinian, by \cite[Theorem 12.9]{AF} the Krull-Schmidt theorem holds for finitely generated
right $R$-modules. Using this, we can conclude that $\add(T_R)$ contains only finitely many isomorphism classes
of indecomposable modules. Thus, by \cite[Proposition 4.2]{AS2}, every finitely generated right $R$-module admits
an $\add(T_R)$-preenvelope.

Let $X \in {}^{\perp}\CX \medcap \text{mod-}R$ and let $f \colon X \to Y$ be an $\add(T_R)$-preenvelope. Since $R$
is a right Morita ring and $X$ is finitely generated, there exists a finitely generated injective module $I_R$ and a
monomorphism $g \colon X \to I$. As $T_R$ is an Ext-projective generator for $\CX$ and $I \in \CX$, there is an
exact sequence $$0 \to K \to T_0 \xrightarrow{\varphi} I \to 0,$$ where $T_0 \in \add(T_R)$ and $K \in \CX$. Because
$X \in {}^{\perp}\CX$, we have $\Ext_R^1(X, K) = 0$, so the map $$\Hom_R(X,\varphi)\colon \Hom_R(X, T_0) \to
\Hom_R(X, I)$$ is surjective.
Thus, there exists $h \colon X \to T_0$ such that $\varphi h = g$, and consequently $h$ is monic. Hence $f$ is also
monic, giving the exact sequence $$0 \to X \xrightarrow{f} Y \to Z \to 0. \quad (\dag)$$ Clearly, $Z$ is finitely
generated. It remains to show that $Z \in {}^{\perp}\CX$. Thus, we must prove that $Z \in {}^{\perp}N$ for every
$N \in \CX$.

Let $N \in \CX$. Since $X\in {}^{\perp}\CX$, we have $\Ext_R^{i\ge 1}(X,N)=0$ for all $i \ge 1$. Moreover, because
$T_R$ is Ext-projective in $\CX$ and $Y \in \add(T_R)$, it follows that $\Ext_R^{i\ge 1}(Y,N)=0$ for all $i \ge 1$.
Consequently, by $(\dag)$, we conclude that $\Ext_R^i(Z,N)=0$ for all $i \ge 2$. Therefore, to prove that $Z\in
{}^{\perp}N$, it suffices to show that $$\Hom_R(f,N):\Hom_R(Y, N) \to \Hom_R(X, N)$$is surjective.

Consider an exact sequence  $$0 \to L \to T_1 \xrightarrow{\psi} N \to 0,$$ with $T_1 \in \add(T_R)$ and $L \in \CX$.
Since $X \in {}^{\perp}\CX$, we have $\Ext_R^1(X,L)=0$, so the map  $$\Hom_R(X,\psi):\Hom_R(X,T_1)\to \Hom_R(X,N)$$
is surjective. For any $\theta \in \Hom_R(X,N)$, choose a $\rho \in \Hom_R(X,T_1)$ with $\psi \rho=\theta$. As $f$ is
an $\add(T_R)$-preenvelope and $T_1 \in \add(T_R)$, the map  $$\Hom_R(f,T_1): \Hom_R(Y, T_1)\to \Hom_R(X,T_1)$$
is surjective. Hence, there exists $\gamma \in \Hom_R(Y,T_1)$ such that $\gamma f=\rho$, and then $(\psi \gamma) f=\theta$
showing that $\Hom_R(f,N)$ is surjective.
\end{proof}

\begin{theorem}\label{5.8}
Let $R$ be a right Morita ring. Then the map $\phi : T \to \CX_{T}$ establishes a bijection between the isomorphism
classes of basic Wakamatsu tilting right $R$-modules and the coresolving subcategories of $\text{mod-}R$ that possess
an Ext-projective generator and are maximal among those having the same Ext-projective generator.
\end{theorem}

\begin{proof}
This map is well defined by Lemma \ref{2.6a} and injective by Lemmas \ref{4.15} and \ref{5.3}. It thus remains to
show that $\phi$ is surjective.

Let $\mathcal{X}$ be a coresolving subcategory of $\text{mod-}R$ that has an Ext-projective generator $T_R$ and is
maximal among those subcategories of $\text{mod-}R$ having $T$ as their Ext-projective generator. As $R$ is right
Artinian, by \cite[Theorem 12.9]{AF}, the Krull–Schmidt theorem holds for finitely generated right $R$-modules.
Hence, by Lemma \ref{4.14a}, $T$ has a unique basic direct summand $T'$ such that $\add(T_R) = \add(T'_R)$ and
$T^\perp=T'^\perp$. It follows that, for any subcategory $\mathcal{Z}$ of $\text{mod-}R$, $T$ is an Ext-projective
generator for $\mathcal{Z}$ if and only if $T'$ is. Therefore, without loss of generality, we may and do assume that
$T$ is basic.

Lemma \ref{5.7} implies that $T_R$ is a basic Wakamatsu tilting module. Since $T_R$ is an Ext-projective generator
for $\mathcal{X}$, it follows that $\mathcal{X} \subseteq T^\perp \medcap \gen^*(T) = \CX_{T}$. By the maximality
assumption on $\mathcal{X}$, we have $\mathcal{X} = \CX_{T}$, and hence $\phi$ is surjective.
\end{proof}

\section{Proof of Theorems 1.3, 1.4 and 1.5}

In this section, we extend the second Mantese–Reiten theorem \cite[Theorem 2.12]{MR} to associative rings; see Theorems
\ref{6.9}, \ref{6.1} and \ref{6.1a}. To establish that the map in Theorem \ref{6.9} is well defined, injective, and
surjective, we rely on Lemmas \ref{6.2}, \ref{6.4}, and \ref{6.7}, respectively.

\begin{lemma}\label{6.2}
Let $_ST_R$ and $_{S'}T'_R$ be two bimodules. If $_{S}T_R ~{\sim} ~_{S'}{T'}_R$, then ${\overline{\CY_{T}}}=
{\overline{\CY_{T'}}}$ and ${\mathcal{Y}_{T}}={\mathcal{Y}_{T'}}$.
\end{lemma}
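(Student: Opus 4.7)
The plan is to deduce both equalities as immediate consequences of Lemma \ref{4.14}, which has already packaged the compatibility of the constructions $(-)^{\perp}$, ${}^{\perp}(-)$, $\Cogen^*(-)$, and $\cogen^*(-)$ with the equivalence relation ${\sim}$. Since the two classes under consideration, $\overline{\CY_{T}} = {}^{\perp}T \cap \Cogen^*(T)$ and $\CY_{T} = {}^{\perp}T \cap \cogen^*(T)$, are defined entirely in terms of these four operations, once the hypothesis $_ST_R ~{\sim}~ _{S'}T'_R$ is translated through Lemma \ref{4.14} there is nothing left to do but intersect.

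Concretely, I would first invoke part (iv) of Lemma \ref{4.14} to get ${}^{\perp}T = {}^{\perp}T'$, and then part (vi) to get $\Cogen^*(T) = \Cogen^*(T')$ and $\cogen^*(T) = \cogen^*(T')$. Intersecting termwise,
\[
\overline{\CY_{T}} \;=\; {}^{\perp}T \cap \Cogen^*(T) \;=\; {}^{\perp}T' \cap \Cogen^*(T') \;=\; \overline{\CY_{T'}},
\]
and, in exactly the same way, $\CY_{T} = \CY_{T'}$.

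There is essentially no obstacle to overcome at this stage, because all of the substantive work has already been carried out in Lemma \ref{4.14}. The underlying reason this works is that, under $T \sim T'$, each of $T$ and $T'$ is isomorphic to a direct summand of a finite direct sum of copies of the other (after tensoring by the invertible bimodules $P$ and $Q$); hence any invariant defined in terms of $\Add(-)$, $\Prod(-)$, or the associated Ext-orthogonal classes is automatically preserved, and the statement of the lemma follows formally.
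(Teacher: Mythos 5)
Your proposal is correct and is essentially the paper's own argument: the paper likewise deduces both equalities directly from Lemma \ref{4.14}, and you have simply made explicit which parts (iv) and (vi) are being intersected. Nothing further is needed.
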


\begin{proof}
It follows immediately by Lemma \ref{4.14}.
\end{proof}

\begin{lemma}\label{6.3}
Let $T_R$ be a module with $S=\End(T_R)$. For any $X_R\in \cogen^*(T)$, the natural homomorphism $\pi_X: X\rt
\Hom_S(\Hom_R(X,T),T)$ is an isomorphism.
\end{lemma}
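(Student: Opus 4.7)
The plan is to reduce to the case $X \in \add(T_R)$ and then use a five-lemma / diagram-chase argument analogous to the one carried out in Lemma \ref{5.2+}(i).

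First, I would observe that for $X = T$ the map $\pi_T$ is just the natural identification $T \cong \Hom_S(S,T) = \Hom_S(\Hom_R(T,T),T)$, and since the functors $\Hom_R(-,T)$ and $\Hom_S(-,T)$ are additive, the natural map $\pi_X$ is an isomorphism whenever $X$ is a finite direct sum of copies of $T$, and hence whenever $X \in \add(T_R)$ (isomorphisms are preserved by passing to direct summands in the diagram).

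Next, suppose $X \in \cogen^*(T)$. By definition there is an exact sequence
$$0 \longrightarrow X \stackrel{f}{\longrightarrow} T_0 \stackrel{g}{\longrightarrow} T_1$$
with $T_0,T_1 \in \add(T_R)$ and such that the Ext-vanishing conditions $\Ext^1_R(\cok f,T)=0$ and $\Ext^1_R(\cok g,T)=0$ hold. Applying $\Hom_R(-,T)$ to the short exact sequences $0 \to X \to T_0 \to \cok f \to 0$ and $0 \to \cok f \to T_1 \to \cok g \to 0$, the Ext-vanishing gives that
$$\Hom_R(T_1,T) \longrightarrow \Hom_R(T_0,T) \longrightarrow \Hom_R(X,T) \longrightarrow 0$$
is exact. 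Now apply the left exact functor $\Hom_S(-,T)$ to this sequence; the result is the bottom row of the natural commutative diagram
$$\begin{CD}
0 @>>> X @>f>> T_0 @>g>> T_1 \\
@. @VV\pi_X V @VV\pi_{T_0}V @VV\pi_{T_1}V \\
0 @>>> \Hom_S(\Hom_R(X,T),T) @>>> \Hom_S(\Hom_R(T_0,T),T) @>>> \Hom_S(\Hom_R(T_1,T),T)
\end{CD}$$
with exact rows. By the first step, $\pi_{T_0}$ and $\pi_{T_1}$ are isomorphisms, so a routine diagram chase (or the four-lemma) forces $\pi_X$ to be an isomorphism.

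The only delicate point is the exactness of the sequence $\Hom_R(T_1,T) \to \Hom_R(T_0,T) \to \Hom_R(X,T) \to 0$, which is exactly where the Ext-vanishing built into the definition of $\cogen^*(T)$ is needed; everything else is formal. Once this exactness is established, the rest is the standard five-lemma argument already used in Lemma \ref{5.2+}, so there is no real obstacle.
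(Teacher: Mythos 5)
Your proof is correct. The reduction to $\add(T_R)$ via additivity of $\pi$, the use of the two Ext-vanishing conditions in the definition of $\cogen^*(T)$ to get exactness of $\Hom_R(T_1,T)\to\Hom_R(T_0,T)\to\Hom_R(X,T)\to 0$, and the four-lemma diagram chase are all sound; the point you flag as delicate is indeed the only place where the hypothesis $X\in\cogen^*(T)$ is genuinely used.

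The comparison with the paper is slightly asymmetric: the paper's own proof of this lemma is a one-line citation to Wakamatsu \cite[Proposition~1]{W2}, so you have supplied an argument where the authors supplied none. That said, your argument is not new to the paper either --- it is word-for-word the strategy of the proof of Lemma~\ref{5.2+}(i), and in fact Lemma~\ref{6.3} is literally the special case $Y=R$ of Lemma~\ref{5.2+}(i) (identify $\Hom_R(R,X)\cong X$ and $\Hom_R(R,T)\cong T$ as a left $S$-module, under which $\varphi_X$ becomes the evaluation map $\pi_X$). So an even shorter correct proof would be to invoke Lemma~\ref{5.2+}(i) with $Y=R$; your self-contained version buys independence from that lemma and from the external reference, at the cost of repeating the diagram chase.
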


\begin{proof}
It holds by \cite[Proposition 1]{W2}.
\end{proof}

\begin{lemma}\label{6.4}
Let $T_R$ and $T'_R$ be two finitely generated self-orthogonal modules. Assume that $S=\End(T_R)$ and $S'=\End(T'_R)$.
If $\CY_{T}\medcap \text{mod-}R=\CY_{T'}\medcap \text{mod-}R$, then ${}_S T_R \sim {}_{S'} T'_R$.
\end{lemma}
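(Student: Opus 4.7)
The plan is to mirror the strategy of Lemma \ref{5.3}, using Lemma \ref{4.13} to recast the equivalence relation ${\sim}$ in terms of $\Hom$ rather than $\otimes$. Concretely, I would set $P = \Hom_R(T',T)$, regarded naturally as an $(S,S')$-bimodule, and $Q = \Hom_R(T,T')$, regarded as an $(S',S)$-bimodule. The target is to prove that $_SP_{S'}$ and $_{S'}Q_S$ are invertible bimodules and that $T \cong \Hom_{S'}(Q,T')$ and $T' \cong \Hom_S(P,T)$ as right $R$-modules; Lemma \ref{4.13} then finishes the job.

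First, since $T_R$ and $T'_R$ are self-orthogonal, one has $T \in \CY_T$ and $T' \in \CY_{T'}$ trivially (each lies in its own $^\perp$ and in its own $\cogen^*$). Hence $T,T' \in \CY_T \cap \text{mod-}R = \CY_{T'} \cap \text{mod-}R$, which gives $T \in \cogen^*(T')$ and $T' \in \cogen^*(T)$. Applying Lemma \ref{6.3} twice then yields the two required isomorphisms $T \cong \Hom_{S'}(\Hom_R(T,T'),T') = \Hom_{S'}(Q,T')$ and $T' \cong \Hom_S(\Hom_R(T',T),T) = \Hom_S(P,T)$.

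Next, I would show that $T \in \add(T'_R)$ (and, symmetrically, $T' \in \add(T_R)$). From $T \in \cogen^*(T')$ one extracts a short exact sequence $0 \rt T \rt T'_0 \rt Z \rt 0$ with $T'_0 \in \add(T'_R)$, $Z \in \cogen^*(T')$ and $\Ext^1_R(Z,T')=0$. A routine dimension shift along this sequence, using the self-orthogonality of $T'$ (so that $\Ext^{i\geqslant 1}_R(T'_0,T')=0$) together with $T \in {}^\perp T'$, upgrades the vanishing to $\Ext^{i\geqslant 1}_R(Z,T')=0$, so $Z \in \CY_{T'} \cap \text{mod-}R$. The hypothesis then forces $Z \in \CY_T$, and in particular $\Ext^1_R(Z,T)=0$; applying $\Hom_R(-,T)$ to the sequence lets one lift $\id_T \in \Hom_R(T,T)$ to $T'_0$, splitting the sequence. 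So $T$ is a direct summand of $T'_0$, i.e.\ $T \in \add(T'_R)$.

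Finally, I would verify that $_SP_{S'}$ and $_{S'}Q_S$ are projective of rank one and invoke Theorem \ref{4.9}. The two-sided finite generation and projectivity of $P$ (and of $Q$) follow directly from $T \in \add(T'_R)$ and $T' \in \add(T_R)$ together with the equivalence and duality of Lemma \ref{2.11}. The faithfully balanced conditions $S \cong \End(P_{S'})$ and $S' \cong \End({}_SP)$ can then be established exactly as in the proof of Lemma \ref{5.3}: the natural map $\eta_{P_{S'}}$ factors as the composition $S \cong \Hom_R(T,T) \cong \Hom_{S'}(\Hom_R(T',T),\Hom_R(T',T))$ furnished by Lemma \ref{5.2+}(ii) (whose hypothesis $T \in \gen^*(T')$ is guaranteed by $T \in \add(T'_R)$), and $\eta_{{}_SP}$ factors through the isomorphism of Lemma \ref{5.2+}(i) applied to $T' \in \cogen^*(T)$ (guaranteed by $T' \in \add(T_R)$); the argument for $Q$ is identical with $T$ and $T'$ swapped. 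The main technical step, and the one I expect to be the only real obstacle, is the dimension-shift argument that places $Z$ in $\CY_{T'} \cap \text{mod-}R$ so that the hypothesis can actually be invoked; after that, the proof becomes the dual bookkeeping of Lemma \ref{5.3}.
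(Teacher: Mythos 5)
Your proposal is correct and follows essentially the same route as the paper: deduce $T\in\CY_{T'}$ and $T'\in\CY_{T}$ from self-orthogonality and the hypothesis, apply Lemma \ref{6.3} for the two isomorphisms, split a short exact sequence to get $T\in\add(T'_R)$ and $T'\in\add(T_R)$, then use Lemmas \ref{2.11} and \ref{5.2+} together with Theorem \ref{4.9} and Lemma \ref{4.13} to conclude. The only cosmetic difference is that you obtain the splitting sequence directly from the definition of $\cogen^*(T')$ via a dimension shift, whereas the paper cites the Ext-injective cogenerator property of Lemma \ref{2.10}(iii); these amount to the same argument.
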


\begin{proof}
Assume that ${\mathcal{Y}_{T}}\medcap \text{mod-}R={\mathcal{Y}_{T'}}\medcap \text{mod-}R$. Then $T\in {\mathcal{Y}_{T'}}$
and $T'\in {\mathcal{Y}_{T}}$, and so by Lemma \ref{6.3}, we have the following isomorphisms of right $R$-modules: $$T\cong \Hom_{S'}(\Hom_R(T,T'),T')$$ and $$T'\cong \Hom_S(\Hom_R(T',T),T).$$ In view of Lemma \ref{4.13}, the argument is complete
if we can show that $_SP_{S'}=\Hom_R(T',T)$ and $_{S'}Q_S=\Hom_R(T,T')$ are invertible bimodules. Equivalently, by Theorem
\ref{4.9}, it suffices to show that the bimodules $_SP_{S'}$ and $_{S'}Q_S$ are projective of rank one. We will prove this
only for $_{S'}Q_S$, as the proof of the other case is similar.

Since $T\in \CY_{T'}$, by Lemma \ref{2.10}(iii), there exists an exact sequence $$0\rt T\rt T_0\rt Y \rt 0$$ in
which $T_0\in \add(T'_R)$ and $Y\in {\mathcal{Y}_{T'}}$. As $\CY_{T'}\medcap \text{mod-}R=\CY_{T}\medcap
\text{mod-}R\subseteq ^{\perp}T$, this sequence splits, and so $T\in \add(T'_R)$. Hence, by Lemma \ref{2.11}(ii),
$\Hom_R(T,T')$ is a finitely generated projective left $S'$-module. Similarly, we get $T'\in \add(T_R)$, and so
$\Hom_R(T,T')$ is a finitely generated projective right $S$-module by Lemma \ref{2.11}(i).

As $T\in \add(T'_R)$, it follows that $T_R\in \cogen^*(T')$. Therefore, by Lemma \ref{5.2+}(i), we obtain a natural
isomorphism $$\Hom_R(T,T)\cong \Hom_{S'}(\Hom_R(T,T'),\Hom_R(T,T')).$$ A direct verification shows that this is
precisely the natural map $\eta_{_{S'}Q}: S\rt \End(_{S'}Q)$. Likewise, applying Lemma \ref{5.2+}(ii), we conclude
that the natural map $\eta_{Q_S}: S'\rt \End(Q_S)$ is an isomorphism as well. Thus, the bimodule $_{S'}Q_S$ is
projective of rank one.
\end{proof}

\begin{lemma}\label{6.7}
Let $\CC$ be a precovering $\overline{resolving}$ subcategory of $\text{Mod-}R$ with an Ext-injective
$\overline{cogenerator}$ $C_R$. Then $\CC^{\perp}\subseteq \gen^*(\CC)$. Moreover, if either
$\Prod(C_R) \subseteq \CC$ or $C_R$ is product-complete, then $C_R$ is a Wakamatsu cotilting module.
\end{lemma}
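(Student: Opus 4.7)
I plan to prove the two assertions via a pushout construction powered by the Ext-injective $\overline{cogenerator}$.

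For $\CC^{\perp}\subseteq\gen^*(\CC)$, fix $M\in\CC^{\perp}$. A $\CC$-precover $\phi_0\colon C_0\to M$ is automatically surjective because $\CC$ is $\overline{resolving}$ and thus contains every projective right $R$-module, so any projective presentation $P\twoheadrightarrow M$ lifts through $\phi_0$ by the precover property. Applying the Ext-injective $\overline{cogenerator}$ to $C_0\in\CC$ produces an exact sequence $0\to C_0\to C'_0\to L_0\to 0$ with $C'_0\in\Prod(C)$ and $L_0\in\CC$; since $C\in\CC^{\perp}$ and $\Ext$ commutes with products in its second argument, $C'_0\in\CC^{\perp}$. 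Forming the pushout of $\phi_0$ along $C_0\hookrightarrow C'_0$ yields an extension $0\to M\to N\to L_0\to 0$ that splits because $\Ext^1_R(L_0,M)=0$ (as $L_0\in\CC$ and $M\in\CC^{\perp}$). The splitting produces a surjection $\pi_0\colon C'_0\to M$ extending $\phi_0$, and the long exact $\Ext$-sequence of $0\to\Ker\pi_0\to C'_0\to M\to 0$, combined with $C'_0,M\in\CC^{\perp}$, forces $\Ker\pi_0\in\CC^{\perp}$. Iterating on $\Ker\pi_0$ yields a resolution $\cdots\to C'_1\to C'_0\to M\to 0$ with all terms in $\Prod(C)$ and all cocycles in $\CC^{\perp}$. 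When $\Prod(C)\subseteq\CC$ this is directly a $\CC$-resolution witnessing $M\in\gen^*(\CC)$; in the general case one extracts the required $\CC$-resolution by comparing $\Ker\phi_i\subseteq C_i\in\CC$ with $\Ker\pi_i\subseteq C'_i\in\Prod(C)$ through the short exact sequences $0\to\Ker\phi_i\to\Ker\pi_i\to L_i\to 0$ produced by the pushout squares, and using the inductive Ext-vanishing.

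For the second assertion, I verify the two conditions of Definition~\ref{2.8}. Condition (i), $\Ext^{\geq 1}_R(C^I,C)=0$: under $\Prod(C)\subseteq\CC$, $C^I\in\CC$ together with $C\in\CC^{\perp}$ gives the vanishing at once; under product-completeness, $C^I$ is a direct summand of some $C^{(J)}$, and the self-orthogonality of $C$ (from $C\in\CC\cap\CC^{\perp}$) yields $\Ext^{\geq 1}_R(C^{(J)},C)\cong\prod_J\Ext^{\geq 1}_R(C,C)=0$, so the summand vanishes too. For condition (ii), I choose a standard injective cogenerator $Q_R$ of $\text{Mod-}R$; since $Q$ is injective, $Q\in\CC^{\perp}$, and the pushout construction of the first assertion, applied to $Q$, produces a resolution $\cdots\to C'_1\to C'_0\to Q\to 0$ with terms in $\Prod(C)$ and cocycles in $\CC^{\perp}$. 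Under $\Prod(C)\subseteq\CC$ the cocycles lie in $\Prod(C)^{\perp_1}$ trivially; under product-completeness, $\Prod(C)=\Add(C)$ combined with condition (i) identifies $\Prod(C)^{\perp_1}$ with $C^{\perp_1}\supseteq\CC^{\perp_1}$, again containing the cocycles. Either way, $Q\in\gen^*(\Prod(C))$, completing the verification.

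The principal obstacle I anticipate lies in the general case of the first assertion when $\Prod(C)\not\subseteq\CC$: the pushout construction cleanly delivers a $\Prod(C)$-resolution with cocycles in $\CC^{\perp}$, but converting this into an honest $\CC$-resolution with cocycles in $\CC^{\perp_1}$ requires a careful diagram chase through the pushout squares together with an inductive analysis that keeps track of Ext-degrees along the resolution. A secondary delicate point is confirming, in the product-complete case, that the identification $\Prod(C)^{\perp_1}=C^{\perp_1}$ really does transfer the cocycle condition to the target class $\Prod(C)$; this relies essentially on condition (i) established just beforehand.
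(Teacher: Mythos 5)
Your construction is, step for step, the one the paper uses (take a $\CC$-precover, push it into $\Prod(C)$ along the Ext-injective $\overline{cogenerator}$ sequence, iterate), but you have talked yourself into a non-existent obstacle, and your proposed workaround for it is the one genuinely incomplete point. At each stage you produce $0\to C_0\to C'_0\to L_0\to 0$ with $C_0\in\CC$ (the precover source), $L_0\in\CC$ (from the $\overline{cogenerator}$ property) and $C'_0\in\Prod(C)$. Since $\CC$ is $\overline{resolving}$ it is closed under extensions, so $C'_0\in\Prod(C)\cap\CC$ automatically; no hypothesis $\Prod(C)\subseteq\CC$ is needed for the first assertion, and the resolution $\cdots\to C'_1\to C'_0\to M\to 0$ already is a $\CC$-resolution with cocycles in $\CC^{\perp}$, witnessing $M\in\gen^*(\CC)$. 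Your fallback for the ``general case'' --- extracting a resolution from the raw precovers $\phi_i$ by comparing $\Ker\phi_i$ with $\Ker\pi_i$ via $0\to\Ker\phi_i\to\Ker\pi_i\to L_i\to 0$ --- does not work as stated: that sequence only shows $\Ext^1_R(X,\Ker\phi_i)$ is a quotient of a subquotient of $\Hom_R(X,L_i)$ for $X\in\CC$, which need not vanish (the precover property gives only $\Ext^1_R(X,\Ker\phi_i)\hookrightarrow\Ext^1_R(X,C_i)$, and $C_i\in\CC$ need not lie in $\CC^{\perp}$). Drop the workaround and invoke extension-closure instead; that is exactly what the paper does.

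A second, smaller incompleteness: you justify $\Ker\pi_0\in\CC^{\perp}$ by ``$C'_0,M\in\CC^{\perp}$ plus the long exact sequence,'' but that only yields $\Ext^{j\geqslant 2}_R(X,\Ker\pi_0)=0$; the degree-one vanishing is equivalent to surjectivity of $\Hom_R(X,C'_0)\to\Hom_R(X,M)$ for $X\in\CC$, i.e.\ to $\pi_0$ being a $\CC$-precover. This does hold --- $\pi_0$ extends the precover $\phi_0$, hence inherits the precover property --- but it must be stated, since it is precisely the input the paper records before concluding $\Ker f_0\in\CC^{\perp}$. The second half of your argument (verifying Definition \ref{2.8} in the two cases) matches the paper and is correct; note only that $\Prod(C)^{\perp_1}=\Add(C)^{\perp_1}=C^{\perp_1}$ in the product-complete case follows from additivity of $\Ext$ in the first variable alone and does not rely on your condition (i).
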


\begin{proof}
Since $C \in \CC^{\perp} \medcap \CC$, it follows that $C_R$ is self-orthogonal. As $\CC$ contains all projective
right $R$-modules, any $\CC$-precover map is surjective.

Let $N_R$ be a module in $\CC^{\perp}$, and let $g: L \rt N$ be a $\CC$-precover of $N$. Since $C$ is
an Ext-injective $\overline{cogenerator}$ for $\CC$, there exists an exact sequence $$0\longrightarrow
L\st{\alpha}\longrightarrow C_0\longrightarrow Y\longrightarrow 0 \  \ (\dag)$$ with $C_0 \in \Prod(C_R)$
and $Y \in \CC$. As $\CC$ is closed under extensions, $(\dag)$ implies that $C_0 \in \CC$. Moreover, since
$C \in \CC^{\perp}$ and $C_0 \in \Prod(C_R)$, we also have $C_0 \in \CC^{\perp}$.

Because $N \in \CC^{\perp}$, applying $\Hom_R(-,N)$ to $(\dag)$ yields a map $f_0 : C_0 \rt N$ such that
$g=f_0 \alpha$. It is straightforward to verify that $f_0$ is also a $\CC$-precover of $N$. Since
$N \in \CC^{\perp}$, $C_0 \in \CC^{\perp}$, and $f_0$ is a $\CC$-precover, the exact sequence
 $$0\longrightarrow \Ker \ f_0\longrightarrow C_0 \overset{f_0}\longrightarrow N
\longrightarrow 0,$$ shows that $\Ker \ f_0 \in \CC^{\perp}$. Repeating this process yields an exact sequence
$$\cdots \longrightarrow C_i\st{f_i}\longrightarrow C_{i-1}\st{f_{i-1}}\longrightarrow
\cdots\rt C_1\st{f_1}\longrightarrow C_0 \st{f_0}\longrightarrow N\longrightarrow 0$$such that $C_i \in
\Prod(C_R) \medcap \CC$ and $\Ker \ f_i \in \CC^{\perp}$ for all $i \ge 0$. Thus $N \in \gen^*(\CC)$, and so
$\CC^{\perp}\subseteq \gen^*(\CC)$.

Let $Q_R$ be an injective cogenerator for $\text{Mod-}R$. Clearly, $Q \in \CC^{\perp}$. By what was shown
above, we obtain an exact sequence  $$\cdots \longrightarrow C_i\st{f_i}\longrightarrow C_{i-1}\st{f_{i-1}}
\longrightarrow \cdots\rt C_1\st{f_1}\longrightarrow C_0 \st{f_0}\longrightarrow Q\longrightarrow 0 \  \
(\ddag)$$ with $C_i\in \Prod(C_R)\medcap \CC$ and $\Ker \ f_i \in \CC^{\perp}$ for all $i \ge 0$.

Suppose first that $\Prod(C_R) \subseteq \CC$. Then $\CC^{\perp} \subseteq \Prod(C_R)^{\perp}$, and
from $(\ddag)$ we conclude that $Q \in \gen^*(\Prod(C_R))$ and $\Ext^{i\ge 1}_R(C^I,C)=0$ for every set $I$.

Next, assume that $C_R$ is product-complete. Then $\Prod(C_R) = \Add(C_R)$. For any $i \ge 0$, since
$C \in \CC$, we have $\Ker \ f_i \in C^{\perp}$, and hence $$\Ker \ f_i \in \Add(C_R)^{\perp} = \Prod(C_R)^{\perp}.
$$ Thus again $Q \in \gen^*(\Prod(C_R))$ and $\Ext^{i\ge 1}_R(C^I,C)=0$ for every set $I$. Therefore, if either
$\Prod(C_R) \subseteq \CC$ or $C_R$ is product-complete, then $C_R$ is a Wakamatsu cotilting module.
\end{proof}

\begin{theorem}\label{6.9}
The map $[C] \mapsto {\overline{\CY_{C}}}$ establishes a one-to-one correspondence between the equivalence classes
of product-complete Wakamatsu cotilting modules in $\text{mod-}R$ and the precovering $\overline{resolving}$
subcategories of $\text{Mod-}R$ that admit a product-complete Ext-injective $\overline{\text{cogenerator}}$ in
$\text{mod-}R$, maximal among those sharing the same Ext-injective $\overline{\text{cogenerator}}$.
\end{theorem}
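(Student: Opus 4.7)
The plan is to verify the three ingredients of a bijection---well-definedness, surjectivity, and injectivity---adapting the structure of the proofs of Theorems \ref{5.1} and \ref{6.1}. Well-definedness of $[C] \mapsto \overline{\CY_{C}}$ follows from Lemma \ref{6.2}; the target subcategory $\overline{\CY_{C}}$ is precovering $\overline{resolving}$ by Lemma \ref{2.9}(ii), the module $C_R$ is a product-complete Ext-injective $\overline{cogenerator}$ for it in $\text{mod-}R$ by hypothesis together with Lemma \ref{2.9}(iii), and the required maximality is the second part of Lemma \ref{2.9}(iii).

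For surjectivity, let $\CC$ be a precovering $\overline{resolving}$ subcategory of $\text{Mod-}R$ admitting a product-complete Ext-injective $\overline{cogenerator}$ $C \in \text{mod-}R$, maximal among those with the same cogenerator. The second clause of Lemma \ref{6.7}, invoked via the product-completeness of $C$, shows that $C$ is Wakamatsu cotilting. Then Lemma \ref{2.9}(iii) yields $\CC \subseteq \overline{\CY_{C}}$, and since $C$ remains an Ext-injective $\overline{cogenerator}$ of $\overline{\CY_{C}}$, the maximality hypothesis on $\CC$ forces $\CC = \overline{\CY_{C}}$.

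For injectivity, suppose $\overline{\CY_{C}} = \overline{\CY_{C'}}$ with $C, C' \in \text{mod-}R$ product-complete Wakamatsu cotilting. The crucial step is to promote the inclusion $C \in \overline{\CY_{C'}}$ to a finite one, namely $C \in \add(C'_R)$. From $C \in \Cogen^*(C')$ one extracts an exact sequence
$$0 \rt C \rt (C')_0 \rt M_1 \rt 0$$
with $(C')_0 \in \Prod(C'_R)$ and $M_1 \in \overline{\CY_{C'}}$. Since $\overline{\CY_{C'}} = \overline{\CY_{C}} \subseteq {}^{\perp}C$, we have $\Ext^{1}_{R}(M_1, C) = 0$, so the sequence splits and $C$ is a direct summand of $(C')_0 \in \Prod(C'_R) = \Add(C'_R)$, by product-completeness of $C'$. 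Because $C$ is finitely generated, the image of any split embedding $C \hookrightarrow (C')^{(I)}$ lies in a finite sub-sum $(C')^{(I_0)}$, and composing with the restricted projection exhibits $C$ as a direct summand of $(C')^{(I_0)}$, hence $C \in \add(C'_R)$. Symmetrically, $C' \in \add(C_R)$.

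Set $S = \End(C_R)$, $S' = \End(C'_R)$, $P = \Hom_R(C', C)$ and $Q = \Hom_R(C, C')$. Since $C \in \add(C'_R) \subseteq \cogen^*(C')$ and $C' \in \add(C_R) \subseteq \cogen^*(C)$, Lemma \ref{6.3} produces the natural isomorphisms $C \cong \Hom_{S'}(Q, C')$ and $C' \cong \Hom_{S}(P, C)$ of right $R$-modules. Lemma \ref{2.11}, applied via both $\add$-inclusions, shows that $P$ and $Q$ are finitely generated projective on both sides. Reusing the argument of Lemma \ref{6.4} verbatim (with $C, C'$ in place of $T, T'$), Lemma \ref{5.2+} exhibits each of $\eta_{P_{S'}}, \eta_{{}_{S}P}, \eta_{Q_{S}}, \eta_{{}_{S'}Q}$ as a composition of canonical isomorphisms, so $P$ and $Q$ are projective bimodules of rank one and therefore invertible by Theorem \ref{4.9}. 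Lemma \ref{4.13} then gives $C \sim C'$. The main obstacle is precisely the descent from $C \in \Add(C'_R)$ to $C \in \add(C'_R)$, which is the reason the theorem is restricted to product-complete cotilting modules in $\text{mod-}R$: it is exactly the combination of product-completeness of $C'_R$ with the finite generation of $C$ that re-enters the world where the projectivization theorem (Lemma \ref{2.11}) applies, allowing the bimodule machinery of Lemma \ref{6.4} to be reused.
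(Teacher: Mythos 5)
Your proof is correct, and its overall architecture --- well-definedness via Lemmas \ref{6.2} and \ref{2.9}, surjectivity via Lemma \ref{6.7} together with the maximality of $\CC$ inside $\overline{\CY_{C}}$, and injectivity via exhibiting $\Hom_R(C',C)$ and $\Hom_R(C,C')$ as invertible bimodules --- matches the paper's. Where you genuinely diverge is the reduction step inside the injectivity argument. The paper first invokes \cite[Lemma 3.13]{DMT} to get $\overline{\CY_{C}}\bigcap \text{mod-}R=\CY_{C}\bigcap \text{mod-}R$ for a product-complete Wakamatsu cotilting $C\in\text{mod-}R$, intersects the equality $\overline{\CY_{C_1}}=\overline{\CY_{C_2}}$ with $\text{mod-}R$, and then applies Lemma \ref{6.4} as a black box. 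You avoid that citation entirely: you use the Ext-injective $\overline{\text{cogenerator}}$ property of $\overline{\CY_{C'}}$ to embed $C$ split-exactly into an object of $\Prod(C'_R)$, identify $\Prod(C'_R)$ with $\Add(C'_R)$ by product-completeness, and descend to $\add(C'_R)$ by finite generation of $C$; once $C\in\add(C'_R)$ and $C'\in\add(C_R)$ are in hand, together with the reflexivity isomorphisms from Lemma \ref{6.3}, the projectivization and $\eta$-map computations of Lemma \ref{6.4} apply unchanged. Your route is self-contained within the paper (no dependence on the preprint \cite{DMT}) and makes visible exactly where product-completeness and finite generation enter, which is an expository gain; the paper's route is shorter on the page but hides the same content inside the cited lemma. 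Both arguments are valid.
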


\begin{proof}
This map is well defined by Lemmas \ref{2.9} and \ref{6.2} and surjective by Lemma \ref{6.7}. It therefore remains
to prove that it is injective.

For any product-complete Wakamatsu cotilting module $C_R\in \text{mod-}R$, it follows from \cite[Lemma 3.13]{DMT} that $${\overline{\CY_{C}}}\medcap \text{mod-}R={{\CY_{C}}}\medcap \text{mod-}R.$$

Let $C_1$ and $C_2$ be two product-complete Wakamatsu cotilting modules in $\text{mod-}R$ such that ${\overline{\CY_{C_1}}}
= {\overline{\CY_{C_2}}}$. Intersecting both sides with $\text{mod-}R$ yields  $$\CY_{C_1}\medcap \text{mod-}R=\CY_{C_2}
\medcap \text{mod-}R.$$ By Lemma \ref{6.4}, we therefore conclude that $C_1 \sim C_2$.
\end{proof}

In view of Lemmas \ref{4.15} and \ref{4.14a} and Theorems \ref{3.6} and \ref{6.9}, the following is immediate.

\begin{corollary}\label{6.10}
Let $R$ be a complete algebra. The map $T\mapsto {\overline{\CY_{T}}}$ establishes a one-to-one correspondence
between the isomorphism classes of basic Wakamatsu tilting right $R$-modules of finite length and the precovering
$\overline{resolving}$ subcategories of $\text{Mod-}R$ that admit a finite length Ext-injective $\overline{cogenerator}$,
and that are maximal among those with the same Ext-injective $\overline{\text{cogenerator}}$.
\end{corollary}

\begin{theorem}\label{6.1}
Let $R$ be a right Noetherina ring. The map $[T]\mapsto {\mathcal{Y}_{T}}$ establishes a one-to-one correspondence
between the equivalence classes of Wakamatsu tilting right $R$-modules and the resolving subcategories of
$\text{mod-}R$ with an Ext-injective cogenerator, maximal among those with the same Ext-injective cogenerator.
\end{theorem}

\begin{proof}
This map is well defined by Lemmas \ref{2.10} and \ref{6.2}. It is injective by Lemma \ref{6.4}, and so it
remains to show that it is surjective.

Let $\CC$ be a resolving subcategory of $\text{mod-}R$, and let $T_R$ be an Ext-injective cogenerator for
$\CC$. Assume that $\CC$ is maximal among all subcategories of $\text{mod-}R$ having $T_R$ as their
Ext-injective cogenerator. Since $T_R$ is finitely generated and $R$ is right Noetherian, it follows that
$T_R\in \gen^*(R)$. As $T_R$ is Ext-injective in $\CC$, it follows that $T_R$ is self-orthogonal. Since
$T_R$ also serves as an Ext-injective cogenerator for $\CC$ and $R\in \CC$, we can construct an exact sequence
$$0\rt R\rt T_0\rt L\rt 0,$$ where $T_0\in \add(T_R)$ and $L\in \CC$. The Ext-injectivity of $T_R$ in $\CC$,
combined with $L \in \CC$, implies that $\Ext^1_R(L,T_R)=0$. By repeating this procedure, we establish that
$R_R\in \cogen^*(T_R)$. Thus, $T_R$ is a Wakamatsu tilting module, and so by Lemma \ref{2.10}(iii), we conclude
that ${\mathcal{Y}_{T}}=\CC$.
\end{proof}

By Theorem \ref{6.1} and Lemmas \ref{4.15} and \ref{4.14a}, we obtain the following consequence, which extends
\cite[Theorem 2.12]{MR}.

\begin{corollary}\label{6.6}
Assume that the Krull-Schmidt theorem holds for finitely generated right $R$-modules. Then $\phi:T\mapsto \CY_{T}$
is a bijections between the isomorphism classes of basic Wakamatsu tilting right $R$-modules and the resolving
subcategories of $\text{mod-}R$ with an Ext-injective cogenerator, maximal among those with the same Ext-injective
cogenerator.
\end{corollary}

The dual of the proof of Lemma \ref{5.7} yields the following:

\begin{lemma}\label{6.5}
Let $R$ be a right Morita ring and $\CC$ be a resolving subcategory of $\text{mod-}R$. If $\CC$ admits a product-complete
Ext-injective cogenerator $C_R$, then $C_R$ is a Wakamatsu cotilting module.
\end{lemma}

The above result helps ensure that the dual proof of Theorem \ref{5.8} works in establishing our final result.

\begin{theorem}\label{6.1a}
Let $R$ be a right Morita ring. The map $C\mapsto {\mathcal{Y}_{C}}$ establishes a one-to-one correspondence between the
isomorphism classes of basic product-complete Wakamatsu cotilting modules in $\text{mod-}R$ and the resolving subcategories
of $\text{mod-}R$ with a product-complete Ext-injective cogenerator, maximal among those with the same Ext-injective cogenerator.
\end{theorem}


\end{document}